\newtheorem{thm}{Theorem}[section]
\newtheorem{lema}[thm]{Lemma}
\newtheorem{prop}[thm]{Proposition}
\theoremstyle{definition}
\newtheorem{exple}[thm]{Example}
\newtheorem{remark}[thm]{Remark}
\theoremstyle{plain}
\newtheorem{cor}[thm]{Corollary}
\numberwithin{equation}{section}
\numberwithin{equation}{section}
\newcommand{\N}{\mathbb{N}}
\newcommand{\Q}{\mathbb{Q}}
\newcommand{\R}{\mathbb{R}}
\newcommand{\Z}{\mathbb{Z}}
\newcommand{\F}{\mathbb{F}}
\def\1{1\!\!1}
\newcommand{\psmat}[4]{\bigl( \begin{smallmatrix} #1 & #2 \\ #3 & #4 \end{smallmatrix} \bigr)}
\title{The modular automorphisms of quotient modular curves}
\author{ \ Francesc Bars\footnote{First author is supported by PID2020-116542GB-I00 and CEX2020-001084-M} and Tarun  Dalal}
\begin{document}
    \maketitle

     \begin{abstract}
        We obtain the modular automorphism group of any quotient modular curve of level $N$, with $4,9\nmid N$.
        In particular, we obtain some non-expected automorphisms of order 3 that { appear for} the quotient modular curves when the Atkin-Lehner involution $w_{25}$ belongs to the quotient modular group, such { automorphisms are} not necessarily defined over $\mathbb{Q}$. As a consequence of the results, we obtain the full automorphism group of the quotient modular curve $X_0^*(N^2)$, for sufficiently large $N$.
    \end{abstract}
\begin{section}{Introduction}
    A curve $C$ with non-trivial automorphism group encodes deep arithmetic information (in particular the twists of the curve $C$). The Fermat quartic or the Klein quartic are examples
     of curves with big automorphism groups extensively studied in the literature.

    Some of the main curves in arithmetic geometry are the classical modular curves $X$ over $\Q$, which are moduli spaces classifying elliptic curves with some $N$-level structure. A non-trivial automorphism group would have deep arithmetic meaning for such curves. For example, when $N$ is prime, one expects that such a modular curve $X$  has no rational { point} except the cusps and points associated to elliptic curves with
    complex multiplication, usually called CM points, (which is related with Serre's uniformity conjecture). In \cite{Do16}, the author related the existence (for certain $X$) of non-trivial automorphisms
     with the existence of rational points that are neither CM nor cusp. Thus modular curves with non-trivial automorphism group are of key interest.

    Let $X$ be a modular curve (we assume that it is defined over $\Q$), where its complex points { correspond to} the completion at certain cusps of the upper half plane $\mathcal{H}$ modulo the action by a congruence subgroup
    $\Gamma\leq \mathrm{SL}_2(\mathbb{Z})$ (we assume $\pm I\in\Gamma$), and denote by $\mathrm{Aut}(X)$ the automorphism group over $\overline{\mathbb{Q}}$ of the modular curve { $X$}. In particular, the normalizer of $\Gamma$ in
    $\mathrm{PSL}_2(\R)$ (the automorphism group of $\mathcal{H}$) modulo $\Gamma$ provides a subgroup of $\mathrm{Aut}(X)$ {which is known as  {\bf the modular automorphism group} of $X$.}
    For a group $G\leq \mathrm{PSL}_2(\R)$, we denote its normalizer inside $\mathrm{PSL}_2(\R)$ by $\mathcal{N}(G)$. {In particular, the modular automorphism group of the modular curve associated to $\Gamma$ corresponds to $\mathcal{N}(\Gamma)/\Gamma$.}

        Let $N\in \N$ (where $\N$ denotes the set of all positive integers), and consider the modular group $\Gamma_0(N):=\{\psmat{a}{b}{Nc}{d}\in \mathrm{SL}_2(\Z)\}$.
       It is well known that the associated modular curve $X_0(N)$ is defined over $\mathbb{Q}$.
    Atkin-Lehner in \cite[Theorem 8]{AtLe}, stated the result for $\mathcal{N}(\Gamma_0(N))$ modulo $\Gamma_0(N)$, (cf. \cite{AkSin}, \cite{Ba} for the correct statement and the proof of the result). Such normalizer contains
     the Atkin-Lehner involutions defined by the matrices of the form $w_{d,N}=\frac{1}{\sqrt{d}}\psmat{dx}{y}{Nz}{dw} 
    \in \mathrm{SL}_2(\mathbb{R})$ with $d>1,d||N$ (i.e., $d|N$ and $(d,N/d)=1$) and $x,y,z,w\in\mathbb{Z}$ such that $x w d-yz(N/d)=1$ (we also use the notation $w_d$ to denote $w_{d,N}$, the level $N$ will be clear from the context).
    We denote the group generated by all such Atkin-Lehner involutions modulo $\Gamma_0(N)$ by $B(N)$, which is an abelian group with every non-trivial element of order $2$. For $4,9\nmid N$, we known that 
    $$\mathcal{N}(\Gamma_0(N))/\Gamma_0(N)=B(N),$$
    a group of order $2^{\omega(N)}$, where $\omega(N)$ is the number of distinct prime divisors of $N$ (loc.cit.). Later, Conway \cite{Conway} gave a characterization of the normalizer of $\Gamma_0(N)$ in terms of a group action on lattices,
    which has deep interest and consequences in Group Theory.
     We emphasize here that the existence of such Atkin-Lehner automorphisms (involutions) play a crucial role in the understanding of the modular curves $X_0(N)$ and the theory of Hecke operators for $X_0(N)$ (cf. \cite{AtLe}).

    Now consider any subgroup $W_N$ of $B(N)$ (by abuse of notations we denote the collection of distinct representatives of $B(N)$ by $B(N)$), and the group  $\langle \Gamma_0(N), W_N\rangle$ { (we use the notation $H+G$ or $\langle H,G\rangle$ to denote the group generated by the elements of $H$ and $G$)}.
The associated modular curve $X_0(N)/W_N$ is known as a quotient modular curve and it is defined over $\Q$.
    An automorphism of $X_0(N)/W_N$ is said to be \textbf{modular} if it is coming from an element of $\mathrm{PSL}_2(\R)$ (note that such an element belongs to $\mathcal{N}(\langle \Gamma_0(N), W_N\rangle)$ and vice-versa). 
    Since $\mathcal{N}(\Gamma_0(N))/\Gamma_0(N)=B(N)$ for $4,9\nmid N$, it is natural to ask whether the equality $\mathcal{N}(\langle \Gamma_0(N), W_N\rangle)/\langle \Gamma_0(N), W_N\rangle=B(N)/W_N$ is true or not for $4,9\nmid N$.
    When $N$ is square-free, Lang in \cite{Lang} proved that the equality $\mathcal{N}(\langle \Gamma_0(N), W_N\rangle)/\langle \Gamma_0(N), W_N\rangle=B(N)/W_N$ is true for any subgroup $W_N$.
    The main motivation of this article is to study this question for general $N$ with $4,9\nmid N$. More precisely, we completely determine the normalizer $\mathcal{N}(\langle \Gamma_0(N), W_N\rangle)$ and prove the following results.
    \begin{thm}\label{main theorem 1} [Theorem \ref{Complete normalizer without 25 theorem} in text]
        Let $N\in \N$ and $W_N$ be a subgroup generated by the Atkin-Lehner involutions such that $4,9\nmid N$ and $w_{25}\notin W_N$. Then
        $\mathcal{N}(\langle \Gamma_0(N), W_N\rangle) = \langle \Gamma_0(N),{ w_{d}}: d||N\rangle.$
    \end{thm}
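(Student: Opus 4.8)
\emph{Plan.} I prove $\mathcal{N}(\langle\Gamma_0(N),W_N\rangle)=\langle\Gamma_0(N),w_d:d||N\rangle$ by two inclusions. Write $G:=\langle\Gamma_0(N),W_N\rangle$, and note first that every Atkin--Lehner matrix normalizes $\Gamma_0(N)$, so $\Gamma_0(N)\triangleleft G\subseteq\mathcal{N}(\Gamma_0(N))$ and $G/\Gamma_0(N)$ is the subgroup $W_N$ of $B(N)$. The inclusion $\supseteq$ is quick: since $4,9\nmid N$ we have $\langle\Gamma_0(N),w_d:d||N\rangle=\mathcal{N}(\Gamma_0(N))$, and conjugation by any element of $\mathcal{N}(\Gamma_0(N))$ fixes $\Gamma_0(N)$ and induces on $B(N)=\mathcal{N}(\Gamma_0(N))/\Gamma_0(N)$ an inner automorphism, hence the identity, because $B(N)$ is abelian; so such conjugation stabilizes the preimage of every subgroup of $B(N)$, in particular it stabilizes $G$. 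Thus $\mathcal{N}(\Gamma_0(N))\subseteq\mathcal{N}(G)$.

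For the reverse inclusion, let $g\in\mathcal{N}(G)$. The normalizer $\mathcal{N}(G)$ is discrete: $G$ is non-elementary, so if $g_n\to 1$ in $\mathcal{N}(G)$ then for each $\gamma\in\Gamma_0(N)$ the relation $g_n\gamma g_n^{-1}\in\Gamma_0(N)$ together with discreteness of $\Gamma_0(N)$ forces $g_n\gamma g_n^{-1}=\gamma$ for $n\gg0$, whence $g_n$ eventually centralizes a non-elementary subgroup of $\Gamma_0(N)$ and so $g_n=1$. Therefore $\mathcal{N}(G)$ is discrete; containing the lattice $\Gamma_0(N)$ it is itself a lattice, hence $\mathcal{N}(G)\subseteq\mathrm{Comm}_{\mathrm{PSL}_2(\R)}(\Gamma_0(N))=\mathrm{PGL}_2^{+}(\Q)$, and we may normalize $g=\tfrac1{\sqrt n}\psmat{a}{b}{c}{d}$ with $a,b,c,d\in\Z$, $\gcd(a,b,c,d)=1$ and $n=ad-bc>0$. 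Since $G\subseteq\mathcal{N}(\Gamma_0(N))$, it now suffices to prove that $g$ normalizes $\Gamma_0(N)$.

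Both $\Gamma_0(N)$ and $g\Gamma_0(N)g^{-1}$ are normal in $G$ with quotient $\cong W_N$, so $H:=\Gamma_0(N)\cap g\Gamma_0(N)g^{-1}$ is a normal subgroup of $G$ with $G/H\hookrightarrow W_N\times W_N$ elementary abelian of exponent $2$; hence both indices $[\Gamma_0(N):H]$ and $[g\Gamma_0(N)g^{-1}:H]$ are powers of $2$ dividing $|W_N|$, and $\langle\Gamma_0(N),g\Gamma_0(N)g^{-1}\rangle\subseteq G\subseteq\mathcal{N}(\Gamma_0(N))$ maps to a subgroup of the elementary abelian $2$-group $B(N)$. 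Thus $g\Gamma_0(N)g^{-1}$ is a $\mathrm{PGL}_2^{+}(\Q)$-conjugate of $\Gamma_0(N)$ sharing a congruence subgroup of $2$-power index with $\Gamma_0(N)$ and lying, with $2$-power index, inside $\mathcal{N}(\Gamma_0(N))$; the core of the proof is to deduce that it equals $\Gamma_0(N)$. Here one brings in the explicit Atkin--Lehner--Conway description of $\mathcal{N}(\Gamma_0(N))$ valid for $4,9\nmid N$ — in particular that no $w_d$ with $d||N$, $d>1$, lies in $\mathrm{PSL}_2(\Z)$ (so $G\cap\mathrm{PSL}_2(\Z)=\Gamma_0(N)$) and that $B(N)$ is a $2$-group — and plays the normalizing relations $g\Gamma_0(N)g^{-1}\subseteq G$ and $gW_Ng^{-1}\subseteq G$ against the entries of $g$ and the factorization of $n$ (first constraining $n$, then matching $g$ against the Conway normalizers of $\Gamma_0(N)$): one obtains $g\Gamma_0(N)g^{-1}=\Gamma_0(N)$ unless the data compel a genuinely half-integral conjugation at a prime $p$ with $p^{2}||N$, which for $4,9\nmid N$ can only occur for $p=5$ with $25||N$, and the resulting element of $\mathrm{PGL}_2^{+}(\Q)$ normalizes $G$ only if $w_{25}\in W_N$. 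As $w_{25}\notin W_N$ by hypothesis, this exceptional case does not arise, $g$ normalizes $\Gamma_0(N)$, and $g\in\mathcal{N}(\Gamma_0(N))$, completing the proof.

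\emph{The main obstacle} is the final step of the core: turning "$g$ normalizes $G$" into divisibility conditions on $n$ and the entries of $g$ precise enough to classify $g$ modulo $\mathcal{N}(\Gamma_0(N))$ and to show that the unique exotic possibility is the one produced by $w_{25}$. This is exactly where the hypotheses $4,9\nmid N$ and $w_{25}\notin W_N$ are indispensable: when $w_{25}\in W_N$ the half-integral conjugation at $5$ is no longer excluded, and it produces the unexpected order-$3$ automorphisms — not necessarily defined over $\Q$ — mentioned in the abstract.
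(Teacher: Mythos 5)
Your easy inclusion is fine and agrees with the paper (for $4,9\nmid N$ one has $\langle\Gamma_0(N),w_d:d||N\rangle=\mathcal{N}(\Gamma_0(N))$, conjugation induces the identity on the abelian quotient $B(N)$, hence stabilizes $\langle\Gamma_0(N),W_N\rangle$), and the reduction of $g\in\mathcal{N}(G)$ to an element of $\mathrm{PGL}_2^{+}(\Q)$ is harmless (it follows at once from $\mathcal{N}(G)\subseteq\mathrm{Comm}(G)$; your discreteness detour even has a small slip, since $g_n\gamma g_n^{-1}$ only lies in $G$, not in $\Gamma_0(N)$). But the hard inclusion is not proved: the entire core is the sentence beginning ``one obtains $g\Gamma_0(N)g^{-1}=\Gamma_0(N)$ unless the data compel a genuinely half-integral conjugation at a prime $p$ with $p^2||N$, which \dots can only occur for $p=5$,'' and that sentence \emph{is} the theorem, not an argument for it. Nothing in your setup explains why $p=5$ is the only possible exception, why $w_{49}\in W_N$ (say, with $49||N$) cannot produce extra normalizing elements, or where the hypothesis $w_{25}\notin W_N$ actually enters. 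The auxiliary observation that $H=\Gamma_0(N)\cap g\Gamma_0(N)g^{-1}$ is normal in $G$ of $2$-power index is true but cannot carry the load: in the genuine exceptional case ($w_{25}\in W_N$, all $d/(25,d)\equiv\pm1\pmod 5$) the analogous group-theoretic data look the same and yet exotic order-$3$ normalizing elements exist, so any proof must use arithmetic specific to the prime $5$, which your outline never supplies.

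For comparison, the paper's proof has three ingredients you would have to replace or reproduce: (i) a Conway Big Picture argument (Lemmas 2.4--2.7, Theorem 2.13) showing that $\mathcal{N}(\langle\Gamma_0(N),W_N\rangle)$ is contained in $\Upsilon^{-1}\Gamma_0^*(M)\Upsilon$ with $M=N/\mathrm{lcm}(u_1^2,\dots,u_k^2)$, together with the reduction $\mathcal{N}(\langle\Gamma_0(N),W_N\rangle)\le\mathcal{N}(\langle\Gamma_0(N),w_{u_1^2},\dots,w_{u_k^2}\rangle)$, which disposes of the non-square Atkin--Lehner generators; (ii) explicit congruence computations (Proposition 3.3 and Proposition 3.7) with the matrices $\gamma_{u_1,l}$, $l\in\{1,2,3\}$, forcing a normalizing $\psmat{a}{b}{c}{d}\in\Gamma_0(M)$ to satisfy $ac\equiv bd\equiv 0\pmod{\mathrm{lcm}(u_1,\dots,u_k)}$ -- here the prime $5$ escapes the argument precisely because $5\mid l_1^2-l_2^2$ can happen for $l_1,l_2\in\{1,2,3\}$ while no prime $>5$ divides such a difference, which is the concrete reason $25$ is special; and (iii) Lemma 3.6, where the hypothesis $w_{25}\notin\langle\Gamma_0(N),w_{u_1^2},\dots,w_{u_k^2}\rangle$ is used, via the bound $v_5(abcd)\ge 2v_5(\mathrm{lcm}(u_1,\dots,u_k))$, to close the gap at $5$. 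None of this (nor any substitute for it) appears in your proposal, so as it stands the proof has a genuine gap exactly at the step you yourself flag as ``the main obstacle.''
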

    \begin{thm}\label{main theorem 2} [Theorem \ref{Complete normalizer with 25 theorem} in text]
        Let $N\in \N$ and $W_N$ be a subgroup generated by the Atkin-Lehner involutions such that $4,9\nmid N$ and $w_{25}\in W_N$.
        \begin{enumerate}
            \item If there exists $w_d\in W_N$ such that $\frac{d}{(25,d)}\not \equiv \pm 1 \pmod 5$, then $\mathcal{N}(\langle \Gamma_0(N), W_N\rangle) = \langle \Gamma_0(N),{ w_{d}}: d||N\rangle$.
            \item If $\frac{d}{(25,d)}\equiv \pm 1 \pmod 5$ for all $w_d\in W_N$, then $\mathcal{N}(\langle \Gamma_0(N), W_N\rangle) = \langle \Gamma_0(N), \Upsilon_5^{-1} B_jC_0\Upsilon_5, \Upsilon_5^{-1}B_0C_i\Upsilon_5, w_{d}: d||N\rangle$
             where $\Upsilon_5:=\psmat{1}{0}{0}{1/5}$, $B_j:=\psmat{\frac{N}{25} j+1}{-j}{-{\frac{N}{25}}}{1}$, $C_i:=\psmat{1}{i}{0}{1}$, $0\leq j,i \leq 4$ such that  ${\frac{N}{25}}j\equiv 2\pmod 5$ and $i\equiv -j \pmod 5$.         
             { Moreover $\langle \Upsilon_5^{-1} B_jC_0\Upsilon_5 = (\Upsilon_5^{-1}B_0C_i\Upsilon_5)^{-1} \rangle$ has order $3$ in $ \mathcal{N}(\langle \Gamma_0(N), W_N\rangle)/\langle \Gamma_0(N), W_N\rangle$.}             
        \end{enumerate}
    \end{thm}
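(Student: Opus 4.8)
The goal is to prove Theorem \ref{main theorem 2}, which splits into two cases according to whether $W_N$ contains an Atkin-Lehner involution $w_d$ with $d/(25,d)\not\equiv\pm1\pmod5$. Throughout, write $M=N/25$, so $5\nmid M$, and view everything inside $\mathrm{PSL}_2(\mathbb{R})$. The basic principle is that an element $\gamma\in\mathrm{PSL}_2(\mathbb{R})$ normalizes $\langle\Gamma_0(N),W_N\rangle$ if and only if conjugation by $\gamma$ permutes the (finitely many) cosets, and since $\langle\Gamma_0(N),W_N\rangle$ contains $\Gamma_0(N)$ with finite index, such a $\gamma$ must at least normalize the commensurator, hence lie in $\mathrm{GL}_2^+(\mathbb{Q})$ up to scalars. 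So the first step is to reduce to studying rational matrices and to describe $\mathcal{N}(\langle\Gamma_0(N),W_N\rangle)$ as a subgroup of the known object $\mathcal{N}(\langle\Gamma_0(M),W_M'\rangle)\times(\text{something at }5)$, where $W_M'$ is the "prime-to-$5$ part'' of $W_N$. Concretely, I would localize the problem: the constraints at primes $p\mid M$ are governed by the classical normalizer computation (giving only Atkin-Lehner contributions, by the $4,9\nmid N$ hypothesis and the square-free-away-from-$5$ structure), and the genuinely new phenomenon is entirely local at $5$, where $5^2\| N$.

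**The local computation at $5$.** The heart of the proof is case (2). Here $w_{25}\in W_N$ and every $w_d\in W_N$ satisfies $d/(25,d)\equiv\pm1\pmod5$; the claim is that $\mathcal{N}$ acquires an extra order-$3$ element represented by $\Upsilon_5^{-1}B_jC_0\Upsilon_5$. The plan is: (i) verify by direct matrix computation that $\theta:=\Upsilon_5^{-1}B_jC_0\Upsilon_5$ and its proposed inverse $\Upsilon_5^{-1}B_0C_i\Upsilon_5$ actually normalize $\langle\Gamma_0(N),W_N\rangle$ — this amounts to checking that $\theta\Gamma_0(N)\theta^{-1}\subseteq\langle\Gamma_0(N),w_{25}\rangle$ and that $\theta w_d\theta^{-1}\in\langle\Gamma_0(N),W_N\rangle$ for each generator $w_d$ of $W_N$, where the congruence conditions $Mj\equiv2\pmod5$, $i\equiv-j\pmod5$ are exactly what makes the off-diagonal entries clear the necessary denominators; (ii) compute $\theta^3$ and check it lies in $\langle\Gamma_0(N),W_N\rangle$ — most cleanly by conjugating out $\Upsilon_5$ and working with $B_jC_0$, a matrix in $\mathrm{SL}_2(\mathbb{Z})$ reducing mod $M$ to something unipotent-ish and mod $25$ to an order-$3$ element of $\mathrm{PSL}_2(\mathbb{Z}/25)$ modulo the Atkin-Lehner class of $w_{25}$; (iii) confirm $\theta\notin\langle\Gamma_0(N),W_N\rangle$ itself, e.g. by reducing modulo $5$ and observing the image is not in the Borel-plus-$w_{25}$ subgroup of $\mathrm{PSL}_2(\mathbb{Z}/5)$. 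The last ingredient gives the order-$3$ statement: the subgroup it generates in the quotient has order dividing $3$ and is nontrivial, hence order exactly $3$.

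**Showing nothing else is new.** For the upper-bound direction (that $\mathcal{N}$ is no larger than claimed, in both cases), the strategy is: given any $\gamma\in\mathcal{N}(\langle\Gamma_0(N),W_N\rangle)$, normalize it to a primitive integer matrix up to scalars, then study its action on the set of cusps or on $\Gamma_0(N)\backslash\mathcal{H}$, or more algebraically on the congruence data mod $N$. Since $\gamma$ normalizes $\Gamma_0(N)$ up to $W_N$ — i.e. $\gamma$ normalizes $\langle\Gamma_0(N),B(N)\rangle=\mathcal{N}(\Gamma_0(N))$ when $W_N$ generates all of $B(N)$, and in general lands in $\mathcal{N}(\Gamma_0(N))$ after multiplying by a suitable Atkin–Lehner element — one reduces to the case $\gamma$ itself lies in $\mathcal{N}(\Gamma_0(N))=B(N)$ away from the prime $5$, and at $5$ one classifies the possible local pieces. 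The key local fact is that the normalizer of $\Gamma_0(25)$ inside $\Gamma_0(5)$-cosets, modulo $\langle\Gamma_0(25),w_{25}\rangle$, is cyclic of order $3$ when the compatibility conditions hold and trivial otherwise — this is where the dichotomy $d/(25,d)\equiv\pm1\pmod5$ versus $\not\equiv\pm1\pmod5$ enters: an incompatible $w_d$ fails to be normalized by $\theta$, killing the extra automorphism and collapsing case (1) to the same answer as Theorem \ref{main theorem 1}.

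**Main obstacle.** I expect the principal difficulty to be the upper bound in case (2)(b): ruling out \emph{all} conceivable extra normalizers beyond the explicit order-$3$ one. The subtlety is that $\Gamma_0(25)$, unlike $\Gamma_0(p)$ for $p$ prime or $\Gamma_0(N)$ for $N$ square-free, has a normalizer in $\mathrm{PSL}_2(\mathbb{R})$ strictly larger than $\langle\Gamma_0(25),w_{25}\rangle$ (this is already implicit in why $4,9\nmid N$ is assumed rather than a square-free hypothesis), so one must carefully analyze the local normalizer at $5$ — likely via Conway's lattice description \cite{Conway} or a direct $\mathrm{SL}_2(\mathbb{Z}/25)$-computation — and show that the only part of it surviving the quotient by $W_N$, and compatible with the global $\Gamma_0(M)$-structure, is the order-$3$ cyclic piece exhibited. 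Making the global-to-local reduction airtight (patching the behaviour at $5$ with the classical behaviour at the other primes, via a strong approximation / CRT argument on matrix entries) is the other place where care is needed.
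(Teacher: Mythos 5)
Your lower-bound plan (check that $\theta:=\Upsilon_5^{-1}B_jC_0\Upsilon_5$ and $\Upsilon_5^{-1}B_0C_i\Upsilon_5$ normalize the group, that $\theta$ has order $3$ in the quotient, and that conjugation by $\theta$ destroys any $w_d$ with $\frac{d}{(25,d)}\not\equiv\pm1\pmod 5$, which yields the dichotomy between cases (1) and (2)) is sound and is essentially what the paper does with its explicit mod-$5$ computations. The genuine gap is in your upper bound. The reduction you propose --- ``$\gamma$ normalizes $\Gamma_0(N)$ up to $W_N$ \dots and in general lands in $\mathcal{N}(\Gamma_0(N))$ after multiplying by a suitable Atkin--Lehner element'' --- is false, and it would contradict the very theorem in case (2): every Atkin--Lehner representative lies in $\Gamma_0^*(N)=\mathcal{N}(\Gamma_0(N))$, while $\theta\notin\Gamma_0^*(N)$, so no Atkin--Lehner multiple of $\theta$ normalizes $\Gamma_0(N)$. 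An element normalizing $\langle\Gamma_0(N),W_N\rangle$ with $w_{25}\in W_N$ need not normalize $\Gamma_0(N)$ at all, so your ``away from $5$ it is classical'' step has no justification as stated. Relatedly, your remark that $\mathcal{N}(\Gamma_0(25))$ is strictly larger than $\langle\Gamma_0(25),w_{25}\rangle$ is incorrect (since $4,9\nmid 25$ they are equal); the new phenomenon is that the normalizer of the \emph{larger} group $\langle\Gamma_0(25),w_{25}\rangle$ strictly exceeds it, precisely because $w_{25}\in\mathrm{PSL}_2(\mathbb{Q})$ makes the square-free (Lang-type) argument break down.

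The paper closes exactly this gap with machinery your plan only gestures at. First, conjugating $\langle\Gamma_0(N),w_{25}\rangle$ by $\Upsilon_5$ and running a lattice/valuation argument in Conway's Big Picture, it shows the fixed classes of the conjugated group form the $(N/25\,|\,1)$-snake, whence $\mathcal{N}(\langle\Gamma_0(N),w_{25}\rangle)\subseteq\Upsilon_5^{-1}\Gamma_0^*(N/25)\Upsilon_5$; it then lists explicit coset representatives $B_jC_i$, $C_i$ of the conjugated group in $\Gamma_0(N/25)$ and isolates by a mod-$5$ computation exactly which of them normalize, giving the order-$3$ classes and nothing more. Second, to pass from $\langle\Gamma_0(N),w_{25}\rangle$ to an arbitrary $W_N$ two further nontrivial inputs are needed which a CRT/strong-approximation gesture does not supply: (a) for the square generators $w_{u_i^2}$ with $5\nmid u_i$, a congruence constraint of the shape $ac\equiv bd\equiv 0\pmod{\mathrm{lcm}(u_2,\dots,u_k)}$ on any normalizing matrix, combined with a $\mathrm{PSL}_2(\mathbb{Z}[\tfrac{1}{5}])$-integrality argument, to prove that adding these generators does not change the normalizer; and (b) for the non-square generators, the reduction $\mathcal{N}(\langle\Gamma_0(N),W_N\rangle)\leq\mathcal{N}(\langle\Gamma_0(N),w_{u_1^2},\dots,w_{u_k^2}\rangle)$, proved by a $\mathrm{PSL}_2(\mathbb{Q})$-membership (irrationality) argument. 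Without carrying out these steps, your outline does not rule out additional normalizing elements, which is the hardest part of the statement.
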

    As an immediate consequence of Theorem \ref{main theorem 1} and Theorem \ref{main theorem 2}, we obtain that { for $4,9\nmid N$,} we have $\mathcal{N}(\langle \Gamma_0(N), W_N\rangle)/\langle \Gamma_0(N), W_N\rangle \supsetneq B(N)/W_N$
    if and only if $w_{25}\in W_N$ and $\frac{d}{(25,d)}\equiv \pm 1\pmod 5$ for all $w_d\in W_N$. Moreover, in such cases the group $\mathcal{N}(\langle \Gamma_0(N), W_N\rangle)/\langle \Gamma_0(N), W_N\rangle$
    (and hence the group $\mathrm{Aut}(X_0(N)/W_N)$) may be non-abelian and contains elements of order $3$.
    In particular, this explains the new automorphisms of order 3 that appear for the quotient curves $X_0(25 q)/\langle w_{25}\rangle$ and $X_0(25 q)/\langle w_{25},w_q\rangle$ with $q$ prime (under some assumptions),
     which is first observed in \cite{BaTa24}.

    It is expected that when $N$ is sufficiently large, the modular automorphism group of $X_0(N)/W_N$ coincides with the full automorphism group $\mathrm{Aut}(X_0(N)/W_N)$. This statement is true for the modular curve $X_0(N)$ (cf. \cite{KeMo}). Moreover, when $N$ is either square-free (cf. \cite{BG21}) or a perfect square (cf. \cite{DLM22}), then this statement is true for the modular curve $X_0^*(N):=X_0(N)/B(N)$.
    In particular, combining Theorem \ref{main theorem 1} and Theorem \ref{main theorem 2} with \cite[Theorem 5.8]{DLM22} we get
        \begin{cor}
            Let $N\geq 10^{400}$ and $(6,N)=1$. Then
               $\mathrm{Aut}(X_0(N^2)/B(N^2))\cong \begin{cases}
                    \Z/3\Z, \ \mathrm{if \ } 5||N,\\
                    \{\mathrm{id}\}, \ \mathrm{otherwise}.
                \end{cases}$
        \end{cor}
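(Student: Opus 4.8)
The plan is to combine the normalizer computation of Theorems~\ref{main theorem 1} and \ref{main theorem 2} with the known result of \cite[Theorem 5.8]{DLM22} on the full automorphism group of $X_0^*(N^2)$. First I would invoke \cite[Theorem 5.8]{DLM22}, which asserts that for $N\geq 10^{400}$ with $(6,N)=1$, every automorphism of $X_0(N^2)/B(N^2)=X_0^*(N^2)$ is modular, i.e. $\mathrm{Aut}(X_0^*(N^2)) = \mathcal{N}(\langle \Gamma_0(N^2), B(N^2)\rangle)/\langle \Gamma_0(N^2), B(N^2)\rangle$. Thus the entire problem reduces to computing this modular automorphism group, which is exactly what Theorems~\ref{main theorem 1} and \ref{main theorem 2} deliver with the choice $W_{N^2} = B(N^2)$.

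Next I would split into cases according to whether $5 \mid N$. Write $M = N^2$, so $4,9\nmid M$ is automatic from $(6,N)=1$, and $25\mid M$ if and only if $5\mid N$. \textbf{Case 1: $5\nmid N$.} Then $25\nmid M$, so $w_{25}\notin B(M)$, hence $w_{25}\notin W_M = B(M)$; Theorem~\ref{main theorem 1} applies and gives $\mathcal{N}(\langle \Gamma_0(M), B(M)\rangle) = \langle \Gamma_0(M), w_d : d\|M\rangle = \langle \Gamma_0(M), B(M)\rangle$, so the quotient is trivial. \textbf{Case 2: $5\| N$.} Then $25\| M$, so $w_{25}\in B(M) = W_M$, and every Atkin-Lehner divisor $d\|M$ is either coprime to $5$ or of the form $25 d'$ with $(d',5)=1$; in both cases $\frac{d}{(25,d)}$ is coprime to $5$, and since $M/25 = (N/5)^2$ is a perfect square coprime to $6$, one checks $\frac{d}{(25,d)}$ is a square modulo $5$, hence $\equiv \pm 1 \pmod 5$ (the nonzero squares mod $5$ are $\{1,4\}=\{\pm1\}$). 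So hypothesis (2) of Theorem~\ref{main theorem 2} holds for all $w_d\in W_M$, giving $\mathcal{N}(\langle \Gamma_0(M), B(M)\rangle)/\langle \Gamma_0(M), B(M)\rangle$ equal to the order-$3$ group generated by $\Upsilon_5^{-1}B_jC_0\Upsilon_5$, i.e. $\cong \Z/3\Z$. \textbf{Case 3: $25\mid N$.} Impossible under $N\geq 10^{400}$? No — but then $4\cdot 9\nmid M$ still holds while I must check $9\nmid M$ and $4\nmid M$: since $(6,N)=1$ these hold; however if $25\mid N$ then $125\mid M$, which is allowed by the hypotheses $4,9\nmid M$. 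In this sub-case $25\|M$ fails, so I must verify that $w_{25}\notin B(M)$ because $25$ is not an exact divisor of $M$ — indeed $(25, M/25) = (25, 5^{k-2}\cdots) \neq 1$ when $5^3\mid M$, so $w_{25}\notin B(M)$ and Theorem~\ref{main theorem 1} again gives a trivial quotient; this is absorbed into the ``otherwise'' branch.

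The main obstacle is the verification in Case~2 that the quadratic-residue condition $\frac{d}{(25,d)}\equiv \pm1\pmod 5$ holds for \emph{every} $w_d\in B(N^2)$ when $5\|N$: one must argue that for each exact divisor $d$ of $N^2$, the complementary part $d/(25,d)$ is a perfect square modulo $5$. This follows because $d$ is an exact divisor of a perfect square $N^2$, so for each prime $p\neq 5$ the exponent of $p$ in $d$ equals its (even) exponent in $N^2$ or is $0$, making $d/(25,d)$ a perfect square in $\Z$, a fortiori a square mod $5$; since $5\nmid d/(25,d)$, it is a nonzero square mod $5$, hence $\equiv\pm1$. Once this is in hand the corollary follows immediately by reading off the cases, with the identification $\mathrm{Aut}(X_0^*(N^2)) \cong \Z/3\Z$ when $5\|N$ and trivial otherwise.
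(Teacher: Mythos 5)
Your argument is correct and is exactly the route the paper intends: the paper states this corollary as an immediate consequence of Theorems \ref{main theorem 1} and \ref{main theorem 2} together with \cite[Theorem 5.8]{DLM22}, and your case analysis (including the observation that any exact divisor $d$ of the perfect square $N^2$ is itself a perfect square, so $d/(25,d)$ is a nonzero square mod $5$, hence $\equiv\pm1\pmod 5$, and that $\Gamma_0^*(N^2)=\langle\Gamma_0(N^2),B(N^2)\rangle$ makes the quotient exactly $\Z/3\Z$) fills in precisely the details the paper leaves implicit. Your handling of the case $25\mid N$ (no $w_{25}$ in $B(N^2)$, so Theorem \ref{main theorem 1} applies and the quotient is trivial) is also the right reading of the hypotheses.
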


    In the last section of this paper, under some assumption we prove that { the order $3$ modular automorphisms} are defined over $\mathbb{Q}(\sqrt{5})$.

    \end{section}

        \section{The Conway Big Picture for quotient modular groups}
 	For $N\in \N$ and a subgroup $W_N$ generated by certain the Atkin-Lehner involutions, consider the group $\langle \Gamma_0(N),W_N\rangle$.
	 We denote by $\Gamma_0^*(N)$, the subgroup generated by $\Gamma_0(N)$ and all the Atkin-Lehner involutions $w_{d,N}$ with $d||N$. The aim of this section is to prove that $\mathcal{N}(\langle \Gamma_0(N),W_N\rangle)$ is a subgroup of $\Gamma_0^*(M)$, for some positive divisor $M$ of $N$. In order to do this, we will follow Conway's Big Picture introduced in \cite{Conway}.
%
%
%
%
%
%
%
    \subsection{The Big Picture}

{{
Two lattices $L(1)$ and $L(2)$ ({commensurable} with
$\mathbb{Z}\times\mathbb{Z}$) are equivalent to each other if there
exists $q\in\mathbb{Q}^*$ such that $L(1)=q L(2)$. This is an equivalence relation on the set of lattices that are commensurable with $\Z\times\Z$.
Each equivalence class has a representative of the form $L_{s,g/t}:=\langle
(s,g/t),(0,1)\rangle_{Lat}=\langle s e_1+\frac{g}{t}e_2,e_2\rangle_{Lat}$, where $s>0$ is a rational number and
$0\leq g/t<1$, with $g\geq 0$ and $t>0$ coprime integers, $e_1=(1,0)$ and $e_2=(0,1)$; when $g=0$ we
denote $L_s$ by $L_{s,0}$. For simplicity of notations we denote the equivalence
class containing the lattice $L_{s,g/t}$ by $L_{s,g/t}$. The hyperdistance between two equivalence
classes $L_{s_1,g_1/t_1}$ and $L_{s_2,g_2/t_2}$ {  is defined as follows: 
after a suitable base change one class corresponds to $\langle
e_1,e_2\rangle_{Lat}$ and the other corresponds to $\langle k e_1,
e_2\rangle_{Lat}$ for a certain $k\in \N$, the number $k$ is the hyperdistance between $L_{s_1,g_1/t_1}$ and
$L_{s_2,g_2/t_2}$.}

The Big Picture of Conway is a graph defined as follows: the points (or vertices) correspond
to the equivalence classes $L_{s,g/t}${ ,} and two classes
$\{L_{s_1,g_1/t_1},L_{s_2,g_2/t_2}\}$ are {connected by a non-oriented edge if and only if the hyperdistance between such two classes is a prime number}.}}

There is a natural action of $\mathrm{PGL}_2(\mathbb{Q})$ on the Big Picture defined as follows: for $A:=\psmat{a}{b}{c}{d}
	\in \mathrm{GL}_2(\mathbb{Q})$, 
$A*L_{s,g/t}$ { corresponds} to the representative of the class containing the lattice $\langle s(a e_1+b e_2)+\frac{g}{t}(c e_1+d e_2), ce_1+d e_2\rangle_{Lat}$, i.e., in terms of basis elements, this action can be written as:
%
\begin{equation} \label{eq1}
    s e_1+\frac{g}{t}e_2 \mapsto s (a e_1+b e_2)+\frac{g}{t}(ce_1+de_2), \ 
    e_2 \mapsto ce_1+de_2.
\end{equation}
This action could be extended to $\mathrm{PSL}_2(\mathbb{R})$ with the same definition.
 We would like to remark that $w_{d,N}\in \mathrm{PSL}_2(\mathbb{Q})$ when $d$ is a perfect square, and $w_{d,N}\notin \mathrm{PSL}_2(\mathbb{Q})$ if $d$ is not a perfect square.
%
%

The following results are well-known (cf \cite{Conway}).
\begin{thm} [Conway] The stabilizer of $X=L_{s,g/t}$ in $\mathrm{PSL}_2(\R)$ is     $\psmat{s}{g/t}{0}{1}^{-1}\mathrm{PSL}_2(\mathbb{Z})\psmat{s}{g/t}{0}{1}\subseteq \mathrm{PSL}_2(\mathbb{Q}),$
 {    and in the Big Picture literature such stabilizer is denoted
$\Gamma_0(X|X)+$.}
\end{thm}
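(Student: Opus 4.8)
The plan is to reduce everything to the single distinguished vertex $X_{0}:=L_{1,0}$, the class of the lattice $\Z^{2}=\Z\times\Z$, and then transport by an explicit element of $\mathrm{PGL}_2(\Q)$. First I would unwind \eqref{eq1}: the action of $A=\psmat{a}{b}{c}{d}$ on a class is the linear map $\sigma_{A}$ determined by $e_{1}\mapsto ae_{1}+be_{2}$, $e_{2}\mapsto ce_{1}+de_{2}$, applied to the chosen generators of a representative lattice. Writing a representative as the lattice $\Z^{2}Q$ spanned by the rows of a matrix $Q\in\mathrm{GL}_2(\Q)$, one gets $A*[\Z^{2}Q]=[\Z^{2}QA]$; in particular $A*X_{0}$ is the class of $\Z^{2}A$. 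That this prescription descends to equivalence classes — independence of the representative $Q$ and compatibility with $\Q^{*}$-scaling, after renormalising to the shape $L_{s,g/t}$ — is a short check with $\mathrm{GL}_2(\Z)$-matrices and rational scalars.

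Next I would compute $\mathrm{Stab}(X_{0})$. An element $A\in\mathrm{PSL}_2(\R)$ fixes $X_{0}$ iff $\Z^{2}A=q\,\Z^{2}$ for some $q\in\Q^{*}$; comparing covolumes and using $\lvert\det A\rvert=1$ for a lift of $A$ forces $q^{2}=1$, hence $\Z^{2}A=\Z^{2}$, so a lift of $A$ lies in $\mathrm{GL}_2(\Z)$ with determinant $1$, i.e. $A\in\mathrm{PSL}_2(\Z)$. Conversely every such $A$ fixes $X_{0}$, so $\mathrm{Stab}(X_{0})=\mathrm{PSL}_2(\Z)$, which is already contained in $\mathrm{PSL}_2(\Q)$.

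Finally I would transport. Set $P:=\psmat{s}{g/t}{0}{1}$, so that the rows of $P$ are precisely the chosen generators of $L_{s,g/t}$ and hence $P*X_{0}=X$. The only delicate point now is that, with the convention above, $\sigma_{A}\circ\sigma_{B}=\sigma_{BA}$, so the action \emph{reverses} multiplication: $A*(B*L)=(BA)*L$. Consequently $A$ fixes $X=P*X_{0}$ iff $(PA)*X_{0}$ and $P*X_{0}$ are equivalent, iff $\Z^{2}PA=q\,\Z^{2}P$ for some $q\in\Q^{*}$, iff $\Z^{2}(PAP^{-1})=q\,\Z^{2}$, iff $PAP^{-1}$ fixes $X_{0}$, iff $PAP^{-1}\in\mathrm{PSL}_2(\Z)$ by the previous step; that is, $A\in P^{-1}\mathrm{PSL}_2(\Z)P$, which is the asserted formula. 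Since $\det P=s>0$ and $P$ is rational, conjugating a determinant-one integral matrix by $P$ yields a determinant-one rational matrix, so $\mathrm{Stab}(X)\subseteq\mathrm{PSL}_2(\Q)$; identifying $P^{-1}\mathrm{PSL}_2(\Z)P$ with the Big Picture notation $\Gamma_{0}(X|X)+$ is then just unpacking the definition in \cite{Conway}. I expect the main obstacle to be purely notational: keeping straight that the action is a right action rather than a left one — precisely what makes the answer conjugation by $P^{-1}$ instead of by $P$ — while the genuine mathematical input is only the covolume/determinant argument of the second paragraph.
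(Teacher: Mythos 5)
Your argument is correct, and it is complete modulo the routine verifications you flag. Note that the paper itself gives no proof of this statement — it is quoted as well known from Conway's article — so there is no internal argument to compare against; your route (write $L_{s,g/t}=\mathbb{Z}^2P$ with $P=\psmat{s}{g/t}{0}{1}$, pin down the stabilizer of the base class $[\mathbb{Z}^2]$ as $\mathrm{PSL}_2(\mathbb{Z})$ by the covolume/determinant argument, then transport by $P$) is the standard one underlying Conway's claim. You also handle the one genuine pitfall correctly: the action \eqref{eq1} is a right action on row lattices, $A*[\mathbb{Z}^2Q]=[\mathbb{Z}^2QA]$, and this orientation is precisely what makes the stabilizer $P^{-1}\mathrm{PSL}_2(\mathbb{Z})P$ rather than $P\,\mathrm{PSL}_2(\mathbb{Z})P^{-1}$, in agreement with the stated formula and with the containment in $\mathrm{PSL}_2(\mathbb{Q})$.
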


Following the notation in the Big Picture, for {a positive integer $h$ with $h^2|N$,} we define the group
$$\Gamma_0(N/h|h)+:=\psmat{1/h}{0}{0}{1} \Gamma_0^*(N/{ h^2}) \psmat{h}{0}{0}{1}\subset \mathrm{PGL}_2(\Q).$$

\begin{thm}[Conway]\label{Conway}
    The point $L_{s,g/t}$ is fixed by $\Gamma_0(N)$ if and only if {$s$ is a positive integer and $t|24$ is the largest integer such that $t^2|N$ and $1|s|(N/t^2)$. 
The collection of all such points on the Big Picture is called the $(N|1)$-snake.}
   Furthermore, $\sigma\in \mathrm{PSL}_2(\mathbb{R})$ leaves the $(N|1)$-snake invariant as a set (not { necessarily point-wise}) if and only if $\sigma \in\Gamma_0(N/h|h)+$, where $h$ is the largest { positive integer} such that $h|24$ and $h^2|N$. Thus { $\mathcal{N}(\Gamma_0(N))=\Gamma_0(N/h|h)+$.}
\end{thm}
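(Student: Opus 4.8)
The plan is to pin down the $(N|1)$-snake by a direct matrix computation, and then to deduce the remaining two assertions from it together with the classical description of the commensurator of $\mathrm{SL}_2(\Z)$. For the snake: by the preceding stabiliser theorem, $L_{s,g/t}$ is fixed by $\Gamma_0(N)$ exactly when $P\,\Gamma_0(N)\,P^{-1}\subseteq\mathrm{PSL}_2(\Z)$, where $P=\psmat{s}{g/t}{0}{1}$; so I would compute the four entries of $P\psmat{a}{b}{Nc}{d}P^{-1}$ --- namely $a+\tfrac{gNc}{ts}$, $\tfrac{g(d-a)}{t}-\tfrac{g^2Nc}{t^2s}+sb$, $\tfrac{Nc}{s}$ and $d-\tfrac{gNc}{ts}$ --- and impose that all of them are integers for every $\psmat{a}{b}{Nc}{d}\in\Gamma_0(N)$. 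Testing on $\psmat{1}{b}{0}{1}$ with $b$ arbitrary forces $s\in\Z_{>0}$; testing on $\psmat{1}{0}{N}{1}$, using $\gcd(g,t)=1$, forces $s\mid N$ and $t\mid N/s$; and the off-diagonal entry is the one carrying the real information. Writing $N=sm$ with $t\mid m$, that entry is integral iff $t^2\mid t(d-a)-gmc$; taking $c=1$ and $a=d=1$ gives $t\mid m/t$, hence $t^2\mid N$ and $s\mid N/t^2$, and what then remains is the condition $t\mid d-a$ for all $\psmat{a}{b}{Nc}{d}\in\Gamma_0(N)$. Since $ad\equiv 1\pmod N$ and $t\mid N$, this says $a\equiv a^{-1}\pmod t$ for every unit $a$ modulo $t$, i.e.\ that $(\Z/t\Z)^{\times}$ has exponent dividing $2$, which an elementary check on prime powers shows is equivalent to $t\mid 24$. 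Reading the computation backwards shows these conditions are sufficient as well, so $\mathrm{Fix}(\Gamma_0(N))$ is exactly the set of $L_{s,g/t}$ with $s\in\Z_{>0}$, $t\mid 24$, $t^2\mid N$, $s\mid N/t^2$ and $0\le g/t<1$ (in particular every such $t$ divides $h$, since $\mathrm{lcm}(t,h)$ again divides $24$ and has square dividing $N$); this is the $(N|1)$-snake.

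Next I would reduce the two remaining statements to a single normaliser computation. The point-wise stabiliser of the snake is already $\mathrm{Stab}(L_1)\cap\mathrm{Stab}(L_N)=\mathrm{PSL}_2(\Z)\cap\psmat{N}{0}{0}{1}^{-1}\mathrm{PSL}_2(\Z)\psmat{N}{0}{0}{1}=\Gamma_0(N)$, so $\Gamma_0(N)$ is the point-wise stabiliser of its own fixed locus. For any $\sigma\in\mathrm{PSL}_2(\R)$ one has $\sigma\cdot\mathrm{Fix}(\Gamma_0(N))=\mathrm{Fix}(\sigma\Gamma_0(N)\sigma^{-1})$, and since the point-wise stabiliser of $\sigma\cdot S$ is $\sigma\,(\text{point-wise stabiliser of }S)\,\sigma^{-1}$, applying this to $S=\mathrm{Fix}(\Gamma_0(N))$ shows that $\sigma$ leaves the snake invariant as a set if and only if $\sigma\Gamma_0(N)\sigma^{-1}=\Gamma_0(N)$, i.e.\ $\sigma\in\mathcal{N}(\Gamma_0(N))$. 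Thus both the set-invariance statement and the final ``Thus'' follow from the single equality $\mathcal{N}(\Gamma_0(N))=\Gamma_0(N/h|h)+$.

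For that equality, the inclusion ``$\supseteq$'' is a direct check: one conjugates an arbitrary $\psmat{a}{b}{Nc}{d}\in\Gamma_0(N)$ by each generator of $\Gamma_0(N/h|h)+=\psmat{1/h}{0}{0}{1}\Gamma_0^*(N/h^2)\psmat{h}{0}{0}{1}$ --- that is, by the conjugate of $\Gamma_0(N/h^2)$ and by the conjugates of the Atkin--Lehner matrices $w_{e,N/h^2}$ with $e\|N/h^2$ --- and verifies, using $h^2\mid N$ and the divisibility and parity relations forced by $ad-Nbc=1$, that the image again lies in $\Gamma_0(N)$. For ``$\subseteq$'': any $\sigma\in\mathcal{N}(\Gamma_0(N))$ commensurates $\Gamma_0(N)$, hence commensurates $\mathrm{SL}_2(\Z)$, hence (by the classical fact that the commensurator of $\mathrm{PSL}_2(\Z)$ in $\mathrm{PSL}_2(\R)$ is $\mathrm{PGL}_2(\Q)^{+}$) can be written $\sigma=\psmat{a}{b}{c}{d}$ with coprime integral entries and determinant $Q>0$; conjugating $\psmat{1}{1}{0}{1}$, $\psmat{1}{0}{N}{1}$ (and, where necessary, a few further elements of $\Gamma_0(N)$) back into $\Gamma_0(N)$ produces a system of congruences among $a,b,c,d,Q,N$, and one checks that its solution set is exactly $\Gamma_0(N/h|h)+$.

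The main obstacle is this last verification. It is not formal that the congruences forced by conjugating the generators close up to precisely $\Gamma_0(N/h|h)+$ and not to something slightly larger --- in particular the group $\Gamma_0^*(N/h^2)$ must appear but the full normaliser $\mathcal{N}(\Gamma_0(N/h^2))$ (which is strictly larger when $N/h^2$ is not square-free) must not --- and arranging the bookkeeping so that the divisor $h\mid 24$ drops out cleanly is exactly what Conway's Big Picture formalism is designed for, paralleling the classical but delicate analysis of \cite{AtLe}, \cite{AkSin}, \cite{Ba}. In a write-up I would give the first two steps in full and, for this one, either reproduce Conway's argument or simply cite \cite{Conway}.
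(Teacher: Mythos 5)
The paper gives no proof of this theorem at all: it is quoted as well known and attributed to \cite{Conway}, so your proposal does strictly more than the text. What you actually prove is correct. The computation of the four entries of $P\gamma P^{-1}$, the reduction of integrality to $st^2\mid N$ together with $t\mid d-a$ for all $\psmat{a}{b}{Nc}{d}\in\Gamma_0(N)$, and the identification of the latter with ``$(\Z/t\Z)^{\times}$ has exponent dividing $2$'', i.e.\ $t\mid 24$, all check out, and they give exactly the intended description of the $(N|1)$-snake. Your reduction of the set-invariance clause is also valid and rather elegant: since $\Gamma_0(N)$ is squeezed between itself and $\mathrm{Stab}(L_1)\cap\mathrm{Stab}(L_N)$, it is the point-wise stabiliser of its own fixed locus, and the identity $\sigma\cdot\mathrm{Fix}(\Gamma_0(N))=\mathrm{Fix}(\sigma\Gamma_0(N)\sigma^{-1})$ turns set-invariance of the snake into $\sigma\in\mathcal{N}(\Gamma_0(N))$ (for the converse you rightly need $\sigma\in\mathrm{PGL}_2(\Q)^{+}$ to have an action at all, which your commensurator remark supplies). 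The one point to be aware of is the direction of your logic: Conway's theorem is normally read with the snake criterion as the \emph{tool} from which $\mathcal{N}(\Gamma_0(N))=\Gamma_0(N/h|h)+$ is deduced, whereas you take the normalizer description as input (ultimately by citation of \cite{AtLe}, \cite{AkSin}, \cite{Ba} or \cite{Conway}) and deduce the invariance criterion from it. That is legitimate here, because the normalizer theorem is independently established in the references the paper itself quotes in the introduction, and everything the paper later uses (Lemma \ref{lemma 2.6 section 2}, Corollary \ref{normalizer as a subgroup for one square Atkin-Lehner}, Theorem \ref{MainThm}) only needs the invariance criterion; but it means the final clause of the theorem receives no independent proof in your write-up --- exactly the step you say you would cite --- so, like the paper, you are resting the hard normalizer computation on the literature rather than reproving it through the Big Picture.
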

\begin{exple}
    If $4,9\nmid N$, then from Theorem \ref{Conway} the $(N|1)$-snake corresponds to the set of all classes $L_{s,0}$ where $s$ is a { positive divisor of $N$}.
Furthermore, under such assumption we have $\mathcal{N}(\Gamma_0(N))=\Gamma_0(N/1|1)+=\Gamma_0^*(N)$.
\end{exple}

\subsection{The normalizer of $\Gamma:=\Gamma_0(N)+W_N$ { with} $w_{u^2,N}\notin\Gamma$.}\label{subsection 2.2}
Let $N$ be a positive integer. Following the ideas of \cite{Lang}, we now study the normalizer of $\Gamma_0(N)+W_N$, where $W_N$ is a subgroup generated by Atkin-Lehner involutions such that for any Atkin-Lehner involution $w_{d,N}\in \Gamma_0(N)+W_N$, $d$ is not a perfect square. Note that under such assumption $w_{d,N}\notin \mathrm{PSL}_2(\mathbb{Q})$. For simplicity of notations we write $w_{d,N}$ by $w_d$. Let $d_1,\ldots,d_n$ be exact divisors of $N$ (i.e., $d_i||N$) such that $\Gamma_0(N)+W_N=\Gamma_0(N)+\langle w_{d_1},\ldots,w_{d_n}\rangle$. Now consider the action of $\Gamma_0(N)+W_N$ on the Big Picture.
%
We use the following notations:
\begin{itemize}
    \item \texttt{t}: a $(\Gamma_0(N)+\langle w_{d_1},\cdots, w_{d_n}\rangle)$-orbit of size $2^n$,
    \item $T_N$: the set of all the \texttt{t}'s.
\end{itemize}
\begin{lema}\label{number of elements in a +we1,..., wen orbit}
  Let $N$ be a positive integer, and $W_N$ be a subgroup generated by Atkin-Lehner involutions such that for any Atkin-Lehner involution $w_d\in \Gamma_0(N)+W_N$, $d$ is not a perfect square. For each $X\in (N|1)$-snake
  $$\{\sigma(X): \sigma \in (\Gamma_0(N)+\langle w_{d_1},\cdots, w_{d_n}\rangle)\}=\{X,w_d(X): d||N, w_d\in \Gamma_0(N)+W_N\}$$
  is a member of $T_N$.
\end{lema}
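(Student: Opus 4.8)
The plan is to prove the claimed set equality first, then to check that this common set has exactly $2^n$ points; with both in hand, membership in $T_N$ is immediate from the definition of $T_N$.

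First I would record the consequence of Theorem \ref{Conway} that will be used repeatedly: a point of the Big Picture is fixed by all of $\Gamma_0(N)$ if and only if it lies in the $(N|1)$-snake. Since $\Gamma_0(N)+W_N$ normalizes $\Gamma_0(N)$, it permutes the snake; hence for $X$ in the snake and $\sigma\in\Gamma_0(N)+W_N$ the image $\sigma(X)$ is again in the snake, and in particular $\sigma(X)$ depends only on the coset $\sigma\Gamma_0(N)$. Thus $\{\sigma(X):\sigma\in\Gamma_0(N)+W_N\}=\{g(X):g\in W_N\}$, where $W_N=(\Gamma_0(N)+W_N)/\Gamma_0(N)\leq B(N)$. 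Since every element of $B(N)$ equals $w_d\Gamma_0(N)$ for a unique exact divisor $d||N$ (using $w_dw_{d'}\equiv w_{dd'/(d,d')^2}\pmod{\Gamma_0(N)}$ for $d,d'||N$), the orbit of $X$ is exactly $\{w_d(X):d||N,\ w_d\in\Gamma_0(N)+W_N\}$, which is the right-hand side of the asserted equality (the case $d=1$, $w_1=\mathrm{id}$, contributing the point $X$ itself).

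Next I would count this orbit. By the previous paragraph it has at most $|W_N|=2^n$ points, so it suffices to show the points $w_d(X)$, as $w_d$ ranges over the elements of $\Gamma_0(N)+W_N$ with $d||N$, are pairwise distinct. Suppose $w_d(X)=w_{d'}(X)$, and put $e:=dd'/(d,d')^2$, an exact divisor of $N$ with $\Gamma_0(N)w_e=\Gamma_0(N)w_{d'}^{-1}w_d$ (in particular $w_e\in\Gamma_0(N)+W_N$). Because $\Gamma_0(N)$ fixes $X$, the assumption forces $w_e(X)=X$: writing $w_{d'}^{-1}w_d=\gamma w_e$ with $\gamma\in\Gamma_0(N)$ gives $w_e(X)=\gamma^{-1}(X)=X$. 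Now comes the crucial step: by the Conway stabilizer theorem, the stabilizer of $X=L_{s,g/t}$ in $\mathrm{PSL}_2(\R)$ equals $\psmat{s}{g/t}{0}{1}^{-1}\mathrm{PSL}_2(\Z)\psmat{s}{g/t}{0}{1}$, which is contained in $\mathrm{PSL}_2(\Q)$; hence $w_e\in\mathrm{PSL}_2(\Q)$. If $e>1$, then $w_e=\frac{1}{\sqrt{e}}\psmat{ex}{y}{Nz}{ew}$ is an Atkin-Lehner involution lying in $\Gamma_0(N)+W_N$, so by the hypothesis on $W_N$ the integer $e$ is not a perfect square, whence $w_e\notin\mathrm{PSL}_2(\Q)$ (recalled above) --- a contradiction. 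Therefore $e=1$, i.e.\ $d=d'$, so the orbit has exactly $2^n$ points and is a member of $T_N$.

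I expect the one genuinely delicate point to be the last: converting ``$w_e$ fixes the snake point $X$'' into ``$w_e\in\mathrm{PSL}_2(\Q)$'' through Conway's explicit description of $\mathrm{Stab}(X)$, and then exploiting the irrationality of a non-square Atkin-Lehner matrix. The remaining ingredients --- the coset bookkeeping modulo $\Gamma_0(N)$, the composition law $w_dw_{d'}\equiv w_{dd'/(d,d')^2}$ in $B(N)$, and the identification $(\Gamma_0(N)+W_N)/\Gamma_0(N)=W_N$ --- are routine, although one must keep track of $\mathrm{PSL}_2$ versus $\mathrm{PGL}_2$ and of the scalar factors that appear when Atkin-Lehner matrices are composed.
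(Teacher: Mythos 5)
Your proposal is correct and follows essentially the same route as the paper: first the orbit of $X$ is identified with the images under coset representatives modulo $\Gamma_0(N)$ (which fixes $X$), and then distinctness of the at most $2^n$ points is obtained from Conway's description of $\mathrm{Stab}(X)\subseteq \mathrm{PSL}_2(\Q)$ together with the fact that $w_{d}$, $w_{d'}$ and $w_{d'}^{-1}w_{d}\equiv w_{dd'/(d,d')^2} \pmod{\Gamma_0(N)}$ lie outside $\mathrm{PSL}_2(\Q)$ because the relevant exact divisors are non-squares. Your explicit reduction of $w_{d'}^{-1}w_d$ to a single Atkin--Lehner representative $w_e$ merely spells out what the paper asserts directly, so there is no substantive difference.
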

\begin{proof}
    Let $X\in (N|1)$-snake. Then $X$ is fixed by $\Gamma_0(N)$. 
{ Since $w_{d_i}\in \mathcal{N}(\Gamma_0(N))$, the elements of $\Gamma_0(N)+\langle w_{d_1},\cdots, w_{d_n}\rangle$ are of the form $\prod_{i=1}^n w_{d_i}^{k_i}\gamma$ for some $\gamma\in \Gamma_0(N)$ and $k_i\in \{0,1\}$. Moreover, we can write }    
    $\prod_{i=1}^n w_{d_i}^{k_i}\gamma=w_d^k\gamma^\prime, \ \mathrm{for \ some \ } \gamma^\prime\in \Gamma_0(N), \ w_d\in W_N \ \mathrm{and} \ k\in \{0,1\}.$ Hence
$$\{\sigma(X): \sigma \in (\Gamma_0(N)+\langle w_{d_1},\cdots, w_{d_n}\rangle )\}=\{X, w_d(X): d||N,w_d\in\Gamma_0(N)+W_N\}.$$
   Let $\mathcal{C}_{W_N}$ denote the representatives of distinct left cosets of $\Gamma_0(N)$ in $\Gamma_0(N)+W_N$.
    Since $w_d^2\in \Gamma_0(N)$ for $d||N$, $[\Gamma_0(N)+W_N:\Gamma_0(N)]=2^n$ and $\Gamma_0(N)$ fixes $X$, the set $\{ X, w_d(X): d||N, w_d\in\Gamma_0(N)+W_N\}=\{X,\delta(X): \delta \in \mathcal{C}_{W_N}\setminus\{\mathrm{id}\}\}$ has at most $2^n$ elements. Recall that the stabilizer of $X$ is of the form $(\Gamma_0(X|X)+)\subseteq \mathrm{PSL}_2(\Q)$.
    Let $\delta_1,\delta_2$ be two distinct elements of $\mathcal{C}_{W_N}\setminus \{\mathrm{id}\}$. { Then there exist integers $d_{m_1},d_{m_2}$ with $d_{m_i}||N$ such that $\delta_i=w_{d_{m_i}}\in W_N$ for $i\in \{1,2\}$.} By the assumption on $W_N$, it is easy to see that $w_{d_{m_1}},w_{d_{m_2}},w_{d_{m_1}}^{-1}w_{d_{m_2}}\not\in \mathrm{PSL}_2(\Q)$. Thus $\delta_i(X)\neq X$ and $\delta_1(X)\neq \delta_2(X)$. 
    The result follows.
\end{proof}
\begin{lema}\label{orbit is a subset of a snake}
    Under the assumptions of Lemma \ref{number of elements in a +we1,..., wen orbit}, for any $\texttt{t}\in T_N$, $\texttt{t}$ is a subset of the $(N|1)$-snake.
\end{lema}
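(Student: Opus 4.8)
The plan is to reduce the claim to the assertion that every point of $\texttt{t}$ is fixed by $\Gamma_0(N)$. Fix $\texttt{t}\in T_N$ and a point $Y\in\texttt{t}$; by the first part of Theorem~\ref{Conway}, $Y$ lies on the $(N|1)$-snake if and only if $\Gamma_0(N)$ fixes $Y$, so it suffices to prove the latter for each such $Y$ and then let $Y$ range over $\texttt{t}$. Throughout write $G:=\Gamma_0(N)+W_N=\Gamma_0(N)+\langle w_{d_1},\dots,w_{d_n}\rangle$, so that $[G:\Gamma_0(N)]=2^n$; since $Y$ lies in an orbit of size $2^n$, we also have $|G\cdot Y|=2^n$.

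The central step is to prove that $\mathrm{Stab}_G(Y)\subseteq\Gamma_0(N)$. Suppose not, and pick $g\in\mathrm{Stab}_G(Y)$ with $g\notin\Gamma_0(N)$. As in the proof of Lemma~\ref{number of elements in a +we1,..., wen orbit}, every element of $G$ can be written as $w_d^k\gamma'$ with $\gamma'\in\Gamma_0(N)$, $w_d\in W_N$ and $k\in\{0,1\}$; since $g\notin\Gamma_0(N)$ this forces $g=w_d\gamma'$ for a non-trivial class $w_d\in W_N$, so $d||N$ with $d>1$, and by the standing hypothesis on $W_N$ the integer $d$ is not a perfect square. Hence $w_d\notin\mathrm{PSL}_2(\Q)$, and therefore $g=w_d\gamma'\notin\mathrm{PSL}_2(\Q)$, since otherwise $w_d=g\gamma'^{-1}$ would be rational. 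On the other hand, writing $Y=L_{s,g/t}$, Conway's description of stabilizers gives
$$\mathrm{Stab}_{\mathrm{PSL}_2(\R)}(Y)=\psmat{s}{g/t}{0}{1}^{-1}\mathrm{PSL}_2(\Z)\psmat{s}{g/t}{0}{1}\subseteq\mathrm{PSL}_2(\Q),$$
and $g$ fixes $Y$, so $g\in\mathrm{PSL}_2(\Q)$ — a contradiction. Thus $\mathrm{Stab}_G(Y)\subseteq\Gamma_0(N)$.

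Finally, I would feed this into the orbit–stabilizer theorem together with the tower law for indices: $2^n=|G\cdot Y|=[G:\mathrm{Stab}_G(Y)]=[G:\Gamma_0(N)]\,[\Gamma_0(N):\mathrm{Stab}_G(Y)]=2^n\,[\Gamma_0(N):\mathrm{Stab}_G(Y)]$, forcing $[\Gamma_0(N):\mathrm{Stab}_G(Y)]=1$, i.e.\ $\Gamma_0(N)=\mathrm{Stab}_G(Y)$ fixes $Y$. By Theorem~\ref{Conway}, $Y$ lies on the $(N|1)$-snake; since $Y\in\texttt{t}$ was arbitrary, $\texttt{t}$ is a subset of the $(N|1)$-snake.

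The only delicate point is the middle step, specifically the passage to $g=w_d\gamma'$ with $d$ not a perfect square: a product of Atkin–Lehner involutions lying in $G$ is again, modulo $\Gamma_0(N)$, a single Atkin–Lehner involution $w_d$, and the hypothesis on $W_N$ is precisely what guarantees its index $d$ is never a perfect square — were it a square, $w_d$ would be a rational element of $G$ outside $\Gamma_0(N)$ capable of fixing a Big Picture point, which would collapse the argument. Everything else is index bookkeeping and an appeal to Conway's theorems.
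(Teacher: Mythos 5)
Your proof is correct and uses essentially the same ingredients as the paper's: the rationality of the Conway stabilizer $\Gamma_0(X|X)+\subseteq\mathrm{PSL}_2(\Q)$, the fact that $(\Gamma_0(N)+W_N)\cap\mathrm{PSL}_2(\Q)=\Gamma_0(N)$, and the index count $[\Gamma_0(N)+W_N:\Gamma_0(N)]=2^n$ against the orbit size. The only difference is organizational (you argue directly via orbit--stabilizer that $\mathrm{Stab}_G(Y)=\Gamma_0(N)$ and invoke the fixed-point characterization of the $(N|1)$-snake, whereas the paper derives a contradiction by showing the orbit would have at least $2^{n+1}$ elements), plus a harmless notational clash in reusing $g$ for both the group element and the parameter in $L_{s,g/t}$.
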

\begin{proof}
    Consider an element $\texttt{t}\in T_{N}$, and $X=L_{s,g/t}\in\texttt{t}$. Suppose $X\notin (N|1)$-snake. Then $\Gamma_0(X|X)+$ is not a supergroup of $\Gamma_0(N)$ (cf. \cite[p.33,(8)]{Lang}). Consequently, $\Gamma_0(N)\cap\Gamma_0(X|X)+$ is a proper subgroup of $\Gamma_0(N)$. In particular we have
    $$[\Gamma_0(N):\Gamma_0(N)\cap\Gamma_0(X|X)+]\geq 2.$$
        Recall that $\Gamma_0(X|X)+\subseteq \mathrm{PSL}_2(\mathbb{Q})$, and $(\Gamma_0(N)+\langle w_{d_1},\cdots, w_{d_n}\rangle)\cap \mathrm{PSL}_2(\mathbb{Q})=\Gamma_0(N)$. Hence we obtain
       \begin{align*}(\Gamma_{0}(N)&+\langle w_{d_1},\cdots, w_{d_n}\rangle )\cap\Gamma_0(X|X)+=\Gamma_0(N)\cap \Gamma_0(X|X)+.\\
 \mathrm{Therefore,} \hspace*{2cm} &[\Gamma_0(N)+\langle w_{d_1},\cdots, w_{d_n}\rangle :(\Gamma_0(N)+\langle w_{d_1},\cdots, w_{d_n}\rangle )\cap \Gamma_0(X|X)+]\\
   =&[\Gamma_0(N)+\langle w_{d_1},\cdots, w_{d_n}\rangle :\Gamma_0(N)\cap \Gamma_0(X|X)+]\\
   =&[\Gamma_0(N)+\langle w_{d_1},\cdots, w_{d_n}\rangle :\Gamma_0(N)][\Gamma_0(N):\Gamma_0(N)\cap\Gamma_0(X|X)+]\geq 2^{n+1}.
  \end{align*}
         Observe that $(\Gamma_0(N)+\langle w_{d_1},\cdots, w_{d_n}\rangle)\cap \Gamma_0(X|X)+$ is the stabilizer of $X$ in $\Gamma_0(N)+\langle w_{d_1},\cdots, w_{d_n}\rangle $. The last equality shows that the $(\Gamma_0(N)+\langle w_{d_1},\cdots, w_{d_n}\rangle )$-orbit of $X$ has at least $2^{n+1}$ elements, which contradicts that $X\in \texttt{t}$.
         Therefore $X\in (N|1)$-snake.
\end{proof}
\begin{lema}\label{lemma 2.6 section 2}
Let $N,W_N$ be as in lemma \ref{number of elements in a +we1,..., wen orbit}.
    Then, $\mathcal{N}(\Gamma_0(N)+W_N)$ is a subgroup of $\Gamma_0(N/h|h)+$, where $h|24$ is the largest natural number such that $h^2|N$.
\end{lema}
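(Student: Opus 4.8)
The plan is to combine Lemmas \ref{number of elements in a +we1,..., wen orbit} and \ref{orbit is a subset of a snake} to show that any normalizing element must preserve the $(N|1)$-snake as a set, and then invoke Conway's Theorem \ref{Conway} to conclude. Concretely, let $\sigma \in \mathcal{N}(\Gamma_0(N)+W_N)$. The key observation is that $\sigma$ permutes the orbits in $T_N$: if $\texttt{t}$ is a $(\Gamma_0(N)+W_N)$-orbit of size $2^n$, then so is $\sigma(\texttt{t})$, since conjugation by $\sigma$ is an automorphism of $\Gamma_0(N)+W_N$ and hence carries the stabilizer of $X$ to the stabilizer of $\sigma(X)$ (of the same index $2^n$). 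So $\sigma$ maps elements of $T_N$ to elements of $T_N$.

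Next I would use the two lemmas to identify the union $\bigcup_{\texttt{t}\in T_N}\texttt{t}$ with the full $(N|1)$-snake. Lemma \ref{orbit is a subset of a snake} gives that each $\texttt{t}$ is contained in the snake, so the union is contained in the snake. Conversely, Lemma \ref{number of elements in a +we1,..., wen orbit} shows that every point $X$ of the $(N|1)$-snake lies in some $\texttt{t}\in T_N$ (namely the orbit $\{X, w_d(X): d||N, w_d\in \Gamma_0(N)+W_N\}$). Hence $\bigcup_{\texttt{t}\in T_N}\texttt{t}$ is exactly the $(N|1)$-snake. Since $\sigma$ permutes the elements of $T_N$, it permutes this union, i.e. $\sigma$ leaves the $(N|1)$-snake invariant as a set.

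Finally, by the second part of Theorem \ref{Conway}, an element of $\mathrm{PSL}_2(\R)$ leaves the $(N|1)$-snake invariant as a set if and only if it belongs to $\Gamma_0(N/h|h)+$, where $h|24$ is the largest natural number with $h^2|N$. Therefore $\sigma\in\Gamma_0(N/h|h)+$, which proves $\mathcal{N}(\Gamma_0(N)+W_N)\subseteq \Gamma_0(N/h|h)+$.

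The main point requiring care — and the step I'd expect to be the crux — is justifying that $\sigma$ genuinely permutes $T_N$ as a family of point-sets, i.e. that the size-$2^n$ property is preserved and that the orbit of $\sigma(X)$ is $\sigma$ applied to the orbit of $X$. The latter is formal (orbits map to orbits under an automorphism composed with the group action: $(\Gamma_0(N)+W_N)\cdot\sigma(X) = \sigma\bigl(\sigma^{-1}(\Gamma_0(N)+W_N)\sigma\cdot X\bigr) = \sigma\bigl((\Gamma_0(N)+W_N)\cdot X\bigr)$ using $\sigma^{-1}(\Gamma_0(N)+W_N)\sigma = \Gamma_0(N)+W_N$), and the size is preserved because the orbit-stabilizer correspondence is conjugation-equivariant. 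One should also note at the outset that $\Gamma_0(N)+W_N$ is a subgroup of $\Gamma_0^*(N)$, so all the points in play are indeed on the Big Picture and the lemmas apply; after that the argument is essentially a bookkeeping of orbits, with no hard computation.
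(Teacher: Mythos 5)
Your proposal is correct and follows the same route as the paper: both arguments identify the union of the size-$2^n$ orbits in $T_N$ with the $(N|1)$-snake via Lemmas \ref{number of elements in a +we1,..., wen orbit} and \ref{orbit is a subset of a snake}, show that any $\sigma$ in the normalizer permutes these orbits (so the snake is set-wise invariant), and then conclude with Theorem \ref{Conway}. Your extra remarks on why orbit sizes are preserved under conjugation simply make explicit what the paper leaves implicit.
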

\begin{proof}
    By Lemma \ref{number of elements in a +we1,..., wen orbit} and Lemma \ref{orbit is a subset of a snake}, $X\in (N|1)$-snake if and only if $X\in \texttt{t}\in T_N$.

    Now for each $\sigma\in \mathcal{N}(\Gamma_0(N)+\langle w_{d_1},\cdots, w_{d_n}\rangle)$, and $\texttt{t}\in T_N$, we have
    $$(\Gamma_0(N)+\langle w_{d_1},\cdots, w_{d_n}\rangle)\sigma(\texttt{t})=\sigma(\Gamma_0(N)+\langle w_{d_1},\cdots, w_{d_n}\rangle)(\texttt{t})=\sigma(\texttt{t}).$$
Thus $\sigma$ fixes the $(N|1)$-snake. Now the result follows from the fact that $\sigma\in \mathrm{PSL}_2(\R)$ leaves the $(N|1)$-snake invariant if and only if $\sigma\in \Gamma_0(N/h|h)+$, where $h|24$ is the largest natural number such that $h^2|N$ (cf. Theorem \ref{Conway}).
\end{proof}

\begin{cor} 
Let $N,W_N$ be as in Lemma \ref{number of elements in a +we1,..., wen orbit} with $4,9\nmid N$.
    Then, { $\mathcal{N}(\Gamma_0(N)+W_N)=\Gamma_0^*(N)$}. In particular the modular automorphism group of the quotient curve $X_0(N)/W_N$ is $B(N)/W_N$.
\end{cor}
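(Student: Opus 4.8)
The plan is to combine Lemma~\ref{lemma 2.6 section 2} with the hypothesis $4,9\nmid N$ to pin down the ambient group containing the normalizer, and then to establish the reverse inclusion using the fact that $B(N)$ is abelian.

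First I would note that under $4,9\nmid N$ the integer $h\mid 24$ that is largest with $h^2\mid N$ is forced to be $h=1$: if $h$ were even then $4\mid h^2\mid N$, and if $3\mid h$ then $9\mid h^2\mid N$, both impossible. Hence Lemma~\ref{lemma 2.6 section 2} gives
$$\mathcal{N}(\Gamma_0(N)+W_N)\subseteq \Gamma_0(N/1|1)+=\Gamma_0^*(N),$$
the last equality being the content of the Example following Theorem~\ref{Conway}. For the reverse inclusion I would check directly that every element of $\Gamma_0^*(N)=\langle\Gamma_0(N),w_{d,N}:d\|N\rangle$ normalizes $\Gamma_0(N)+W_N$. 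Since $4,9\nmid N$ we have $\mathcal{N}(\Gamma_0(N))=\Gamma_0^*(N)$, so each $\gamma\in\Gamma_0(N)$ and each $w_{d,N}$ with $d\|N$ normalizes $\Gamma_0(N)$; it then suffices to see that conjugation by such $w_{d,N}$ sends each generator $w_{d_i}$ of $W_N$ back into $\Gamma_0(N)+W_N$. Because $B(N)=\Gamma_0^*(N)/\Gamma_0(N)$ is abelian, $w_{d,N}\,w_{d_i}\,w_{d,N}^{-1}\equiv w_{d_i}\pmod{\Gamma_0(N)}$, so this conjugate lies in $\Gamma_0(N)+W_N$. Thus $\Gamma_0^*(N)\subseteq\mathcal{N}(\Gamma_0(N)+W_N)$, and together with the first inclusion we obtain $\mathcal{N}(\Gamma_0(N)+W_N)=\Gamma_0^*(N)$.

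For the last assertion, the modular automorphism group of $X_0(N)/W_N$ is by definition $\mathcal{N}(\Gamma_0(N)+W_N)/(\Gamma_0(N)+W_N)$; substituting the equality just proved and applying the third isomorphism theorem gives
$$\Gamma_0^*(N)/(\Gamma_0(N)+W_N)\;\cong\;\bigl(\Gamma_0^*(N)/\Gamma_0(N)\bigr)\big/\bigl((\Gamma_0(N)+W_N)/\Gamma_0(N)\bigr)=B(N)/W_N.$$
The only step needing genuine (if minor) care is the reverse inclusion: one must verify that the ``extra'' Atkin--Lehner involutions $w_d\notin W_N$ still normalize $\Gamma_0(N)+W_N$, and this is precisely where the abelianness of $B(N)$ enters; the rest is bookkeeping built on Lemmas~\ref{number of elements in a +we1,..., wen orbit}--\ref{lemma 2.6 section 2} and Theorem~\ref{Conway}.
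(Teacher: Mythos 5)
Your proposal is correct and follows essentially the same route as the paper: apply Lemma~\ref{lemma 2.6 section 2} with $h=1$ (forced by $4,9\nmid N$) to get $\mathcal{N}(\Gamma_0(N)+W_N)\subseteq\Gamma_0(N/1|1)+=\Gamma_0^*(N)$, and combine with the reverse inclusion $\Gamma_0^*(N)\subseteq\mathcal{N}(\Gamma_0(N)+W_N)$. The paper simply quotes that reverse inclusion as a known fact, whereas you spell it out via the abelianness of $B(N)$ and deduce the final statement by the third isomorphism theorem --- added detail, not a different argument.
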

\begin{proof}
    Under the assumption $4,9\nmid N$ we have $h=1$ in Lemma \ref{lemma 2.6 section 2}. Now the result follows from the facts that $\Gamma_0(N/1|1)+=\Gamma_0^*(N)$ and $\mathcal{N}(\Gamma_0(N)+W_N)\supseteq \Gamma_0^*(N)$.
\end{proof}

\subsection{Towards the normalizer of $\Gamma:=\Gamma_0(N)+W_N$ with $w_{u^2,N}\in\Gamma$.}

{{
Next consider the group $\Gamma:=\Gamma_0(N)+W_N$ such that $w_{u^2,N}\in \Gamma\setminus\{\mathrm{id}\}$ for some natural number $u\neq 1$. Inspired by \cite{Conway} we try to obtain the points $L_{s,g/t}$ of the Big Picture which are fixed by $\Gamma$.

Consider the conjugation by $\Upsilon_u=\psmat{1}{0}{0}{1/u}$ of $\Gamma_0(N)+W_N$, where we write once and for all in this subsection $N=M \cdot u^2$ with $(M,u^2)=1$.
\begin{lema}\label{lemabis}
    The conjugation by $\Upsilon_u$ satisfies the following properties:
    \begin{itemize}
        \item $\Upsilon_u \Gamma_0(N)\Upsilon_u^{-1}=\{\psmat{a}{b}{c}{d}\in\Gamma_0(M u)|b\equiv 0\pmod u\}$, and we denote such conjugate group by $\tilde{\Gamma}^u_0(M u)$,
\item 
$\Upsilon_u w_{u^2,N}\Gamma_0(N)\Upsilon^{-1}_u=\{\psmat{a}{b}{Mc}{d}\in\Gamma_0(M)|a\equiv d \equiv 0 \pmod u\}.$
    \item If $(d,u)=1$, then $\Upsilon_u w_{d,N}\Upsilon_u^{-1}$ is equal to $w_{d,M u}$ or $w_{d,M}$. If $(d,u^2)=u'$, then $\Upsilon_u w_{d,N}\Upsilon_u^{-1}$ is equal to $w_{d/u',M u}$ also equal to $w_{d/u',M}$.
    \end{itemize}
\end{lema}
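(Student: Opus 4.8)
The plan is to derive all three items from a single computation: for $A=\psmat{a}{b}{c}{d}$, conjugation by $\Upsilon_u=\psmat{1}{0}{0}{1/u}$ is given by $\Upsilon_u A\Upsilon_u^{-1}=\psmat{a}{bu}{c/u}{d}$ and $\Upsilon_u^{-1}A\Upsilon_u=\psmat{a}{b/u}{cu}{d}$, and it preserves determinants. Feeding in the relevant matrices and using $N=Mu^2$ with $(M,u^2)=1$, together with bookkeeping of the exact-divisor conditions and of the scalar $\tfrac1{\sqrt d}$ occurring in the Atkin-Lehner matrices, yields the three statements. I would treat the bullets in order.

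For the first bullet, apply the formula to a generic $\gamma=\psmat{a}{b}{Nc_0}{d}\in\Gamma_0(N)$: since $N/u=Mu$, the conjugate $\Upsilon_u\gamma\Upsilon_u^{-1}=\psmat{a}{bu}{Muc_0}{d}$ lies in $\Gamma_0(Mu)$ with upper-right entry $\equiv 0\pmod u$, and $\Upsilon_u^{-1}(\cdot)\Upsilon_u$ sends any $\psmat{a}{b}{Muc_0}{d}$ with $u\mid b$ back to $\psmat{a}{b/u}{Nc_0}{d}\in\Gamma_0(N)$; as determinants are unchanged this gives $\Upsilon_u\Gamma_0(N)\Upsilon_u^{-1}=\tilde{\Gamma}^u_0(Mu)$. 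For the second bullet, write $\Upsilon_u w_{u^2,N}\Gamma_0(N)\Upsilon_u^{-1}=\bigl(\Upsilon_u w_{u^2,N}\Upsilon_u^{-1}\bigr)\tilde{\Gamma}^u_0(Mu)$ by the first bullet; with $w_{u^2,N}=\tfrac1u\psmat{u^2x}{y}{Nz}{u^2w}$ and $u^2xw-Myz=1$, the conjugation formula produces the \emph{integral} matrix $\Upsilon_u w_{u^2,N}\Upsilon_u^{-1}=\psmat{ux}{y}{Mz}{uw}$ of determinant $1$ (the entry $y/u$ is scaled back up to $y$). One then proves the set equality with $\{\psmat{a}{b}{Mc}{d}\in\Gamma_0(M):a\equiv d\equiv 0\pmod u\}$ by two inclusions: ``$\subseteq$'' by multiplying $\psmat{ux}{y}{Mz}{uw}$ on the right by a general $\psmat{\alpha}{\beta}{Mu\gamma}{\delta}\in\tilde{\Gamma}^u_0(Mu)$ (note $u\mid\beta$) and reading off that the $(1,1)$ and $(2,2)$ entries are $\equiv 0\pmod u$ while the $(2,1)$ entry is divisible by $M$; and ``$\supseteq$'' by checking that $\psmat{uw}{-y}{-Mz}{ux}h$ lies in $\tilde{\Gamma}^u_0(Mu)$ for any $h=\psmat{a}{b}{Mc}{d}$ in the target set, which uses exactly the hypotheses $u\mid a$ and $u\mid d$ to get the $(2,1)$ entry divisible by $Mu$ and the $(1,2)$ entry divisible by $u$.

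For the third bullet, conjugating $w_{d,N}=\tfrac1{\sqrt d}\psmat{dx}{y}{Nz}{dw}$ (with $xwd-yz(N/d)=1$) gives $\Upsilon_u w_{d,N}\Upsilon_u^{-1}=\tfrac1{\sqrt d}\psmat{dx}{yu}{Muz}{dw}$. If $(d,u)=1$, then $d||M$ and this is a valid representative of $w_{d,Mu}$: the Atkin-Lehner relation $x'w'd-y'z'(Mu/d)=1$ with $(x',y',z',w')=(x,yu,z,w)$ is precisely $xwd-yz(N/d)=1$. If $(d,u^2)=u'$, then $u'$ is necessarily a perfect-square exact divisor of $u^2$; pulling $\sqrt{u'}$ out of $\tfrac1{\sqrt d}$ rewrites $\Upsilon_u w_{d,N}\Upsilon_u^{-1}$ as $\tfrac1{\sqrt{d/u'}}$ times an integral matrix of determinant $d/u'$ whose lower-left entry is divisible by the appropriate level dividing $Mu$ and which satisfies the Atkin-Lehner relation for index $d/u'$, i.e.\ a representative of $w_{d/u',\,\cdot}$. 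In each case, the identification with the level-$M$ involution ($w_{d,M}$, resp.\ $w_{d/u',M}$) follows because the two representatives differ by an element of $\tilde{\Gamma}^u_0(Mu)$, equivalently they realise the same move in the Big Picture.

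The only steps that need genuine care are in the ramified case of the third bullet: correctly extracting the square factor $u'$ from the normalization $\tfrac1{\sqrt d}$ and pinning down the resulting level using the exact-divisor structure of $d||N=Mu^2$ with $(M,u^2)=1$, and verifying that the level-$Mu$ and level-$M$ Atkin-Lehner representatives agree modulo $\tilde{\Gamma}^u_0(Mu)$. Everything else is routine $2\times2$ matrix arithmetic.
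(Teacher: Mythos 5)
The paper states this lemma without proof (it is treated as a routine conjugation computation), so there is no authorial argument to measure you against; on its own terms, your handling of the first two bullets is correct and complete: the conjugation formula, the reduction of the second bullet to the coset identity $\Upsilon_u w_{u^2,N}\Gamma_0(N)\Upsilon_u^{-1}=\bigl(\Upsilon_u w_{u^2,N}\Upsilon_u^{-1}\bigr)\tilde{\Gamma}^u_0(Mu)$, and the two inclusions (using $u\mid\beta$, resp.\ $u\mid a$, $u\mid d$) are exactly what is needed. The unramified half of the third bullet is also fine, since $\frac{1}{\sqrt d}\psmat{dx}{yu}{Muz}{dw}$ is \emph{literally} of the Atkin--Lehner shape both for level $Mu$ and for level $M$ (for the latter read the lower-left entry as $M\cdot(uz)$); no comparison of representatives is needed there, and in any case representatives of the same involution at level $M$ differ by elements of $\Gamma_0(M)$, not of $\tilde{\Gamma}^u_0(Mu)$.

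The genuine gap is the ramified case, which you flag as the delicate step but do not actually carry out, and the sketch you give would fail. Write $u'=v^2$ and $d=d'v^2$. The conjugate is $\frac{1}{\sqrt{d'}}\psmat{d'vx}{(u/v)y}{M(u/v)z}{d'vw}$, an integral matrix of determinant $d'$ satisfying the level-$M$ relation for $d'$, so the assertion ``equal to $w_{d/u',M}$'' is literal and easy. But its lower-left entry is divisible only by $M(u/v)$ and \emph{never} by $Mu$: the relation $xwd-yz(N/d)=1$ forces $(z,d)=1$, hence $(z,v)=1$, so no choice of representative of $w_{d,N}$ puts the conjugate in the standard $w_{d/u',Mu}$ shape. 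Thus your claim that the lower-left entry is ``divisible by the appropriate level dividing $Mu$ \dots\ i.e.\ a representative of $w_{d/u',\cdot}$'' does not yield the level-$Mu$ statement, and the repair you propose --- that the level-$Mu$ and level-$M$ representatives agree modulo $\tilde{\Gamma}^u_0(Mu)$ --- is false in general: computing $w_{d',Mu}^{-1}w_{d',M}$ gives an element of $\Gamma_0(M)$ whose lower-left entry need not be divisible by $Mu$ nor its upper-right entry by $u$. The level-$Mu$ identification can only be read modulo the full conjugated group $\tilde{\Gamma}^u_{uM}$, i.e.\ it needs the second bullet: for instance when $u'=u^2$ one has $w_{d,N}\in w_{d/u^2,N}\,w_{u^2,N}\,\Gamma_0(N)$, so the conjugate lies in the product of the (unramified) conjugate of $w_{d/u^2,N}$ with the coset of the second bullet, and one is reduced to the case you did prove. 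Your write-up never invokes this mechanism, so as it stands the ramified part of the third bullet is not established (and, read as a statement about matrix shapes or cosets of $\Gamma_0(Mu)$ alone, it is not even true).
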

We write $\tilde{\Gamma}_{uM}^u:=\Upsilon_u(\Gamma_0(N)+\langle w_{u^2,N}\rangle)\Upsilon_u^{-1}=\langle \tilde{\Gamma}_{u}(M), \tilde{\Gamma}_{0}^u(uM)\rangle$. We now study the lattices $L_{s,g/t}$ fixed by $\tilde{\Gamma}^u_{u M}$. Note that $\tilde{\Gamma}^u_{u M}$ fixes the class containing the lattice $L_{s,g/t}$ if and only if it fixes the lattice $L_{s,g/t}$.


\begin{lema}\label{lema1}
If the equivalence class $L_{s,g/t}$ is fixed by  $\tilde{\Gamma}^u_0(M u)$, then 
$s u\in \mathbb{Z}$, and $s u t^2$ is a divisor of $u^2 M$.
\end{lema}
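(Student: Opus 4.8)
The plan is to convert the hypothesis ``the class $L_{s,g/t}$ is fixed by $\tilde{\Gamma}^u_0(Mu)$'' into a conjugacy condition via Conway's theorem on stabilizers, and then read off the two divisibility assertions by feeding in two explicit matrices of $\tilde{\Gamma}^u_0(Mu)$. Set $P:=\psmat{s}{g/t}{0}{1}$. By Conway's stabilizer theorem (stated above, identifying the stabilizer of $L_{s,g/t}$ with $\psmat{s}{g/t}{0}{1}^{-1}\mathrm{PSL}_2(\Z)\psmat{s}{g/t}{0}{1}$), a matrix $A\in\mathrm{PSL}_2(\R)$ fixes the class $L_{s,g/t}$ if and only if $PAP^{-1}\in\mathrm{PSL}_2(\Z)$. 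By Lemma \ref{lemabis} we have $\tilde{\Gamma}^u_0(Mu)\subseteq\mathrm{SL}_2(\Z)$, so for $A\in\tilde{\Gamma}^u_0(Mu)$ the matrix $PAP^{-1}$ has determinant $1$, and it lies in $\mathrm{PSL}_2(\Z)$ precisely when all four of its entries are integers. Thus the hypothesis amounts to: $PAP^{-1}$ is integral for every $A=\psmat{a}{b}{c}{d}\in\tilde{\Gamma}^u_0(Mu)$. A short matrix multiplication gives
$$PAP^{-1}=\pmat{a+\dfrac{gc}{ts}}{\,sb+\dfrac{g}{t}(d-a)-\dfrac{g^{2}c}{t^{2}s}\,}{\dfrac{c}{s}}{d-\dfrac{gc}{ts}}.$$

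First I would take $A=\psmat{1}{u}{0}{1}$, which lies in $\tilde{\Gamma}^u_0(Mu)$ by Lemma \ref{lemabis} (its lower-left entry $0$ is divisible by $Mu$ and its upper-right entry $u$ is divisible by $u$). For this $A$ the formula collapses to $PAP^{-1}=\psmat{1}{su}{0}{1}$, so integrality of the $(1,2)$-entry forces $su\in\Z$, the first assertion. Next I would take $A=\psmat{1}{0}{Mu}{1}\in\tilde{\Gamma}^u_0(Mu)$ (lower-left $Mu$ divisible by $Mu$, upper-right $0$ divisible by $u$), for which
$$PAP^{-1}=\pmat{1+\dfrac{gMu}{ts}}{-\dfrac{g^{2}Mu}{t^{2}s}}{\dfrac{Mu}{s}}{1-\dfrac{gMu}{ts}}.$$
Integrality of the $(2,1)$-entry shows $n:=Mu/s\in\Z$ (and $n>0$ since $s,M,u>0$), and integrality of the $(1,2)$-entry shows $g^{2}n/t^{2}\in\Z$.

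It then remains only to perform an elementary divisibility step: since $\gcd(g,t)=1$ we have $\gcd(g^{2},t^{2})=1$, so $g^{2}n/t^{2}\in\Z$ forces $t^{2}\mid n$, i.e. $t^{2}\mid Mu/s$. Hence $Mu/(st^{2})\in\Z$, and multiplying this integer by $u$ gives $u^{2}M/(sut^{2})=Mu/(st^{2})\in\Z$, that is, $sut^{2}\mid u^{2}M$, as claimed. I do not expect a genuine obstacle here; the only points needing care are getting the conjugation formula for $PAP^{-1}$ exactly right and checking that the two test matrices really belong to $\tilde{\Gamma}^u_0(Mu)$ (both immediate from Lemma \ref{lemabis}). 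One could alternatively argue through Conway's $(Mu|1)$-snake in the spirit of Theorem \ref{Conway}, but the two-matrix computation is the most direct route to the necessary condition stated here, and the same method — run with sufficiently many matrices of $\tilde{\Gamma}^u_0(Mu)$ — would in fact pin down the full set of fixed classes.
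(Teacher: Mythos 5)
Your proposal is correct and is essentially the paper's argument: both proofs extract the two conditions by testing exactly the same pair of elements, $\psmat{1}{u}{0}{1}$ (giving $su\in\Z$) and the lower-triangular unipotent with entry $\pm Mu$ (giving $sut^2\mid u^2M$ after using $(g,t)=1$). The only difference is presentational: the paper works directly with lattice containment, using that $st\,e_1$ is the smallest multiple of $e_1$ in $L_{s,g/t}$, whereas you encode the fixing condition as integrality of $PAP^{-1}$ via Conway's stabilizer theorem — the same computation in slightly different packaging.
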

\begin{proof}
 An arbitrary element of $\tilde{\Gamma}^u_0(M u)$ can be written in the form $\psmat{a}{ub}{Muc}{d}$
		with $a,b,c,d \in \Z$ such that $ad-Mu^2bc=1$. 
		Assume that the lattice $L_{s,g/t}$ is fixed by $\tilde{\Gamma}^u_0(M u)$.
	Recall that $L_{s,g/t}$ are generated by the vectors $v_1:=s e_1+\frac{g}{t}e_2$ and $v_2:=e_2$, and the smallest multiple of $e_1$ that it contains is $v_3= s t e_1=t v_1-g v_2$. Under the action \eqref{eq1}, the matrix $\psmat{1}{u}{0}{1}$
	sends the lattice $L_{s,g/t}$ to the lattice generated by  $v_1+ s u v_2$ and $v_2$. Since $\psmat{1}{u}{0}{1}$ fixes $L_{s,g/t}$, we must have $su\in \Z$.

For the second condition, consider the matrix $\psmat{1}{0}{-Mu}{1}$
	which sends the lattice $L_{s,g/t}$ to the lattice generated by { $v_1'=v_1+\frac{g}{t}(-M u) e_1$} and $v_2'=v_2-M u e_1$. Since $\psmat{1}{0}{-Mu}{0}$ fixes the lattice $L_{s,g/t}$, we have $L_{s,g/t}=\langle v_1',v_2'\rangle_{Lat}$. In particular, this implies $Mue_1,\frac{g}{t}Mue_1\in L_{s,g/t}$. Since $ste_1$ is the smallest multiple of $e_1$ which belongs to $L_{s,g/t}$, there exist $k_1,k_2\in \Z$ such that $gMu^2=sut^2k_1$ and $Mu^2=stuk_2$. Since $(g,t)=1$, these relations give  $sut^2|Mu^2$ as claimed. 
\end{proof}
\begin{lema}
\label{fixed lattices of 1 Atkin-Lehner involution}
Let $M,u\in \N$ such that $(u,M)=1$ and $4,9\nmid N$, where $N=Mu^2$. If $\tilde{\Gamma}^u_{M u}$ fixes the lattice $L_{s,\frac{g}{t}}$, then $L_{s,\frac{g}{t}}$ is of the form $L_{d,0}$ where $d$ is positive divisor of $M$.
\end{lema}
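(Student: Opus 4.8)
The plan is to exploit the decomposition $\tilde{\Gamma}^u_{Mu}=\langle\tilde{\Gamma}_u(M),\tilde{\Gamma}^u_0(uM)\rangle$ and squeeze the invariant class from both sides. First I would use only the subgroup $\tilde{\Gamma}^u_0(uM)=\Upsilon_u\Gamma_0(N)\Upsilon_u^{-1}$: if it fixes the class $L_{s,g/t}$, then $\Gamma_0(N)$ fixes the class $\Upsilon_u*L_{s,g/t}$. Computing via \eqref{eq1} (equivalently, from the matrix $\psmat{s}{g/t}{0}{1}\Upsilon_u=\psmat{s}{g/(tu)}{0}{1/u}$ after rescaling by $u$) one finds that $\Upsilon_u*L_{s,g/t}$ is the class of $L_{us,g/t}$. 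By Conway's Theorem \ref{Conway} this class lies on the $(N|1)$-snake, and since $4,9\nmid N$ the Example following that theorem identifies the snake with $\{L_{d,0}:d\mid N\}$. Matching normal forms therefore forces $g=0$, $t=1$, and that $us$ is a positive integer dividing $N=Mu^2$. Thus already $L_{s,g/t}=L_{s,0}$, and it remains only to sharpen ``$us\mid N$'' to ``$s\in\Z$ and $s\mid M$''.

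The second step brings in the genuinely new piece $\tilde{\Gamma}_u(M)$, which by Lemma \ref{lemabis} consists of all $\psmat{ua}{b}{Mc}{ud}$ with $u^2ad-Mbc=1$. Fixing integers $a,b$ with $u^2a-Mb=1$ (possible because $(u^2,M)=1$), the matrices $A=\psmat{u}{b}{M}{ua}$ and $A'=\psmat{u}{1}{Mb}{ua}$ both lie in $\tilde{\Gamma}_u(M)$. Here fixing the class is the same as fixing the lattice (all these matrices have determinant $1$), so $A$ and $A'$ must fix $L_{s,0}=\langle se_1,e_2\rangle$ as a lattice. By \eqref{eq1}, $A*L_{s,0}=\langle su\,e_1+sb\,e_2,\ Me_1+ua\,e_2\rangle$, and membership $Me_1+ua\,e_2\in L_{s,0}$ forces $M/s\in\Z$; likewise $A'*L_{s,0}=\langle su\,e_1+s\,e_2,\ Mb\,e_1+ua\,e_2\rangle$, and $su\,e_1+s\,e_2\in L_{s,0}$ forces $s\in\Z$. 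Hence $s\in\Z$ and $s\mid M$, i.e. $L_{s,g/t}=L_{d,0}$ with $d=s$ a positive divisor of $M$, as claimed.

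The delicate point, I expect, is entirely in the first step: one must keep straight the direction of the Conway action, since \eqref{eq1} defines an anti-action, $A*(B*L)=(BA)*L$ (right multiplication on the matrix $\psmat{s}{g/t}{0}{1}$). Getting this backwards swaps the roles of $\Upsilon_u$ and $\Upsilon_u^{-1}$ and produces $L_{s/u,g/t}$ in place of $L_{us,g/t}$, after which the second step no longer closes up. It is also worth emphasizing that using the full $(N|1)$-snake (equivalently, all of $\Gamma_0(N)$) is precisely what delivers $t=1$ and $g=0$: settling for the weaker output $su\in\Z$, $sut^2\mid u^2M$ of Lemma \ref{lema1}, together with what $\tilde{\Gamma}_u(M)$ contributes, would still leave room for, say, $t=5$ when $25\mid N$. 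Finally, $(u,M)=1$ is used only to solve $u^2a-Mb=1$, and $4,9\nmid N$ only to fix the shape of the $(N|1)$-snake.
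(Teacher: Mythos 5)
Your argument is correct, and it takes a genuinely different, much shorter route than the paper's for the main difficulty. The paper extracts from $\tilde{\Gamma}^u_0(Mu)$ only the weak conditions of Lemma \ref{lema1} ($su\in\Z$, $sut^2\mid u^2M$) and then eliminates $t>1$ by a lengthy two-case congruence and valuation analysis (Cases I and II, where $4,9\nmid N$ enters through a prime $\ell\geq 5$ dividing $t$, respectively a prime $p\geq 5$ with $v_p(s)<0$), before concluding $s\in\Z$ and $s\mid M$ from a matrix of the shape $\psmat{ux}{yz}{M}{uw}$. You instead use that $\tilde{\Gamma}^u_0(Mu)=\Upsilon_u\Gamma_0(N)\Upsilon_u^{-1}$ and, keeping track of the fact that \eqref{eq1} is an anti-action (which you handle correctly; this is indeed the one delicate point, and your computation $\Upsilon_u*L_{s,g/t}=L_{su,g/t}$ is consistent with the stabilizer statement in Conway's first theorem), translate the hypothesis into $\Gamma_0(N)$ fixing the class $L_{su,g/t}$; the ``fixed $\Rightarrow$ on the $(N|1)$-snake'' half of Theorem \ref{Conway}, in the form of the Example following it (this is where $4,9\nmid N$ is used), then yields $g=0$, $t=1$ and $su\mid N$ in one stroke, making Lemma \ref{lema1} and the paper's Cases I--II unnecessary. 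This use of Conway's classification is legitimate within the paper's framework: the same direction of Theorem \ref{Conway} is already invoked, for instance, in the proof of Lemma \ref{orbit is a subset of a snake}. Your second step is close in spirit to the closing paragraph of the paper's proof but cleaner: the two determinant-one matrices $\psmat{u}{b}{M}{ua}$ and $\psmat{u}{1}{Mb}{ua}$ of $\tilde{\Gamma}_u(M)$ (the only place $(u,M)=1$ is needed) force $M/s\in\Z$ and $s\in\Z$ directly, avoiding the paper's valuation argument. The only implicit ingredient you rely on is the uniqueness of the normal-form representative $L_{s,g/t}$ of a class, which is implicit in the paper's setup and easy to verify. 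In short, your proof buys brevity by delegating the hard step to Conway's description of the fixed points of $\Gamma_0(N)$ applied to a conjugate subgroup, whereas the paper's hands-on computation avoids relying on the full strength of that classification at the cost of substantial case analysis.
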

\begin{proof}
  Suppose $\tilde{\Gamma}^u_{u M}$ fixes the lattice $L_{s,\frac{g}{t}}$. By Lemma \ref{lema1} we have $s u\in\mathbb{Z}$ and $ s u t^2| u^2 M$. For simplicity of notations, in the proof we denote the group $\tilde{\Gamma}^u_{u M}$ by $\tilde{\Gamma}$.
	
	Since $(u,M)=1$, there exist $x,y\in \Z$ such that $u^2 x-M y=1$, i.e., { $\psmat{u x}{y}{M}{u}\in \tilde{\Gamma}$}. Since $\psmat{u x}{y}{M}{u}$ fixes $L_{s,\frac{g}{t}}$, we must have
	$M e_1\in L_{s,\frac{g}{t}}$. Thus there exist $c_1,c_2\in \Z$ such that 
	\begin{equation}\label{eq 1.3 new}
		M e_1=c_1(se_1+\frac{g}{t}e_2)+c_2e_2.
	\end{equation}
	From \eqref{eq 1.3 new}, we get $M=c_1s$ and { $c_1\frac{g}{t}\in \Z$}. Since $(g,t)=1$, we must have $t|c_1$ and there exists $N_1\in \Z$ such that $M=s t N_1$.
	Recall that if $t=1$, then $g=0$. Now assume that $g\ne 0$ equivalently $t>1$. 
	
	\underline{\textbf{Case I}}: First assume that $s$ is an integer. 
	
	The condition $M=s t N_1$ implies that $t|M$, $s|M$, and $(u,t)=(u,s)=1$ (recall that $N=Mu^2$ and $(u,M)=1$ by assumption). Furthermore, the condition $s u t^2| u^2 M$ implies that $t^2|M$. {{Since $4,9\nmid N$ there exists a prime $\ell\geq 5$ with $\ell|t|M$. Moreover, we can choose $w\in \Z$ such that $u^2w^2\not\equiv 1 \pmod \ell$ (this is possible since $(u,t)=1$ and $(\Z/\ell \Z)^\times$ has an element of order more than $2$).
	There exist $x,y,z\in\mathbb{Z}$ such that $u^2xw-Mtyz=1$ i.e., $\psmat{u x}{y}{M t z}{u w}\in \tilde{\Gamma}$.}}
	Since $\psmat{u x}{y}{M t z}{u w}$ fixes the lattice $L_{s,\frac{g}{t}}$, we get 
	\begin{equation}
		s(u xe_1+ye_2)+\frac{g}{t}(M t ze_1+u we_2)\in L_{s,\frac{g}{t}}.
	\end{equation}
	Since $s\in \Z$ and $e_2\in L_{s,g/t}$, the last equation implies
	\begin{equation}
		s u x e_1+\frac{g}{t}u w e_2\in L_{s,\frac{g}{t}}.
	\end{equation}
	Thus there exist $c_1,c_2\in \Z$ such that 
	\begin{equation}
		s u x e_1+\frac{g}{t}u w e_2=c_1(s e_1+\frac{g}{t}e_2)+c_2 e_2.
	\end{equation}
	Solving the above equation we get $c_1=u x\in \Z, \ c_2=\frac{g}{t} u (w-x)\in \Z$.
	Since $(g,t)=1$ and $\frac{g}{t} u (w-x)\in \Z$, we must have $ux\equiv uw \pmod t$, in particular we have $ux\equiv uw {\pmod \ell}$. On the other hand, the relation $u^2xw-Mtyz=1$ implies that $u^2w^2\equiv 1 \pmod \ell$, which contradicts the assumption that $u^2w^2\not\equiv 1 \pmod \ell$. Therefore $c_2\not\in \Z$. Hence $s$ can not be an integer if $t>1$.

	
	\underline{\textbf{Case II}}: Now suppose that $s$ is not an integer. From the relation $su\in \Z$, it is clear that $s\in \Q$. Let $p$ be a prime such that $v_p(s)<0$ (for a prime $p$ and $n\in \N$, we use the notation $v_p(n)$ to denote the unique integer $n_p$ such that $p^{n_p}||n$).
	 Since $su\in \Z$, we have $v_p(u)>0$ and $v_p(s)+v_p(u)\geq 0$.
	
	There exist $a,c,d\in \Z$ such that $u^2ad-Mc=1$, { i.e.,} $\psmat{ua}{1}{Mc}{ud}\in \tilde{\Gamma}$. Since $L_{s,\frac{g}{t}}$ is fixed by $\psmat{ua}{1}{Mc}{ud}\in \tilde{\Gamma}$, we have $s(u ae_1+e_2)+\frac{g}{t}(Mc e_1+u d e_2)\in L_{s,\frac{g}{t}}$. Hence there exist $c_{11},c_{12}\in \Z$ such that 
	\begin{equation}
		s(u ae_1+e_2)+\frac{g}{t}(Mc e_1+u d e_2)= c_{11}(s e_1+\frac{g}{t}e_2)+c_{12}e_2.
	\end{equation}
	Solving the last equation we get
	\begin{equation}\label{value of c11,c12 for general level}
		c_{11}=u a+gN_1 c\in \Z, \ c_{12}=s+\frac{g}{t}(u d-u a-g N_1 c)\in \Z.
	\end{equation}
	If $v_p(t)<-v_p(s)$, then from \eqref{value of c11,c12 for general level} it is easy to see that { $v_p(c_{12})=v_p(s)<0$}. Hence $v_p(t)\geq -v_p(s)$, consequently from the relations $M=stN_1$ and $(M,u)=1$ we get $v_p(t)=-v_p(s)$ and $v_p(N_1)=0$. 
	
	Since $v_p(u)>0$, the assumption $4,9\nmid N$ forces that $p\geq 5$.
	In this case, there exists $y\in \Z$ such that {$(u,y)=1$ and $y^2\not\equiv 1 \pmod {p^{-v_p(s)}}$ { i.e.,} $v_p(y^2-1)<-v_p(s)$}. By the choice of $y$, there exist $x,z,w\in \Z$ such that $u^{2} x w- M z y=1$, i.e., $\psmat{u x}{y}{M z}{u w}, \psmat{ u x}{1}{M z y}{u w}\in \tilde{\Gamma}$. Since $L_{s,\frac{g}{t}}$ fixed by the matrices $\psmat{u x}{y}{M z}{u w}, \psmat{u x}{1}{M z y}{u w}$, we must have $s(u xe_1+ye_2)+\frac{g}{t}(M z e_1+u w e_2), s(u xe_1+e_2)+\frac{g}{t}(M z y e_1+u w e_2)\in L_{s,\frac{g}{t}}$. Thus there exist $c_1,c_2,d_1,d_2\in \Z$ such that
	\begin{equation}
		s(u xe_1+ye_2)+\frac{g}{t}(M z e_1+u w e_2)=c_1(s e_1+\frac{g}{t}e_2)+c_2 e_2, \ \mathrm{and}
	\end{equation}
	\begin{equation}
		s(u x e_1+e_2)+\frac{g}{t}(M z y e_1+u w e_2)= d_1(s e_1+\frac{g}{t}e_2)+d_2e_2.
	\end{equation}
	Solving the previous equations we get
	\begin{equation}\label{value of c1,c2 for general level}
		c_1=u x+g N_1 z\in \Z, \ c_2=s y+\frac{g}{t}(u w-u x-gN_1z)\in \Z \ \mathrm{and}
	\end{equation}
	\begin{equation}\label{value of d1,d2 for general level}
		d_1=u x+gN_1 z y\in \Z, \ d_2=s+\frac{g}{t}(u w-u x-g N_1 z y)\in \Z.
	\end{equation}
	From \eqref{value of c1,c2 for general level} and \eqref{value of d1,d2 for general level} we get
	\begin{equation}\label{g/h multiple is and integer first contradiction}
		\frac{g}{t} u w(1-y)-\frac{g}{t} u x(1-y)-\frac{g}{t} g N_1 z(1-y^2)\in \Z.
	\end{equation}
	This is a contradiction since $v_p(\frac{g}{t} u w(1-y))\geq 0, v_p(\frac{g}{t} u x(1-y))\geq 0$ but $v_p(\frac{g}{t} g N_1 z(1-y^2))<0$ (it follows from the assumption on $y$). Therefore we must have $v_p(t)=0$ i.e., $v_p(s)=0$ which is not possible. Hence we conclude that $t=1$. Therefore the lattice $L_{s,g/t}$ is of the form $L_{s,0}$.

	Any matrix $\gamma=\psmat{ux}{yz}{M}{uw}\in \tilde{\Gamma}$ acts on $L_{s,0}$ as follows: $\gamma \cdot L_{s,0}=\langle s(uxe_1+yze_2), Me_1+uwe_2 \rangle_{Lat}.$
	If $\gamma$ fixes $L_{s,0}$, then we must have 
	\begin{equation}\label{equation 1.8}
		s(uxe_1+yze_2)=d_1se_1+d_2e_2 \ \mathrm{and}
	\end{equation}
	\begin{equation}\label{equation 1.9}
		Me_1+uwe_2=d_1^\prime se_1+d_2^\prime e_2,
	\end{equation}
	for some $d_1,d_2,d_1^\prime,d_2^\prime\in \Z$. From the above equations we have $d_1=ux$ and $d_2=syz$. Recall that $d_2,su\in \Z$. If $s\not\in\Z$ (observe that $s\in \Q$), then there exists a prime $p$ such that $v_p(s)<0$ but $v_p(u)>0$ and $v_p(yz)>0$. This contradicts that $u^2xw-Mzy=1$. Hence $s\in \Z$. On other hand from equation \eqref{equation 1.9} we have
	$M=d_1^\prime s$. Since $d_1^\prime,s\in \Z$, we must have $s|M$. This completes the proof.	
\end{proof}
\begin{cor}\label{normalizer as a subgroup for one square Atkin-Lehner}
    Let $u,M,N$ be as in Lemma \ref{fixed lattices of 1 Atkin-Lehner involution}. The normalizer of $\Gamma:=\langle \Gamma_0(N),w_{u^2,N}\rangle$ is a subgroup of $\Gamma_0^*(M)$ conjugated by the matrix $\Upsilon_u^{-1}$.
\end{cor}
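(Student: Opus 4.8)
The plan is to transport the problem, via conjugation by $\Upsilon_u=\psmat{1}{0}{0}{1/u}$, into a setting where Conway's description of the normalizer of $\Gamma_0(M)$ (Theorem~\ref{Conway}) applies verbatim. Write $\tilde{\Gamma}:=\Upsilon_u\Gamma\Upsilon_u^{-1}=\tilde{\Gamma}^u_{uM}$. Even though $\Upsilon_u$ only lies in $\mathrm{PGL}_2(\Q)$, it has positive determinant, so a scalar multiple of it lies in $\mathrm{PSL}_2(\R)$ and induces the same transformation of $\mathcal{H}$ and of the Big Picture; hence conjugation by $\Upsilon_u$ is an automorphism of $\mathrm{PSL}_2(\R)$ and $\mathcal{N}(\Gamma)=\Upsilon_u^{-1}\,\mathcal{N}(\tilde{\Gamma})\,\Upsilon_u$. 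It therefore suffices to prove $\mathcal{N}(\tilde{\Gamma})\subseteq\Gamma_0^*(M)$.

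The crux is to show that the set $S$ of points of the Big Picture fixed \emph{pointwise} by $\tilde{\Gamma}$ is exactly the $(M|1)$-snake. One inclusion is precisely Lemma~\ref{fixed lattices of 1 Atkin-Lehner involution}: every $\tilde{\Gamma}$-fixed point has the shape $L_{d,0}$ with $d\mid M$, and since $4,9\nmid N=Mu^2$ forces $4,9\nmid M$, the Example following Theorem~\ref{Conway} identifies $\{L_{d,0}:d\mid M\}$ with the $(M|1)$-snake; so $S$ is contained in the snake. For the converse I would invoke Lemma~\ref{lemabis}: both generating pieces $\tilde{\Gamma}^u_0(uM)$ and $\tilde{\Gamma}_u(M)$ of $\tilde{\Gamma}$ consist of integral matrices of determinant $1$ whose lower-left entry is divisible by $M$, so $\tilde{\Gamma}\subseteq\Gamma_0(M)$; hence $\tilde{\Gamma}$ fixes every point that $\Gamma_0(M)$ fixes, i.e.\ the whole $(M|1)$-snake. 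Thus $S$ equals the $(M|1)$-snake.

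Finally, for any $\sigma\in\mathcal{N}(\tilde{\Gamma})$ and any $X\in S$, the point $\sigma(X)$ is fixed by $\sigma\tilde{\Gamma}\sigma^{-1}=\tilde{\Gamma}$, so $\sigma(X)\in S$; hence $\sigma$ leaves the $(M|1)$-snake invariant as a set. By the second half of Theorem~\ref{Conway}, $\sigma\in\Gamma_0(M/h|h)+$ where $h\mid 24$ is the largest integer with $h^2\mid M$, and $4,9\nmid M$ forces $h=1$, so $\sigma\in\Gamma_0(M/1|1)+=\Gamma_0^*(M)$. This gives $\mathcal{N}(\tilde{\Gamma})\subseteq\Gamma_0^*(M)$, and therefore $\mathcal{N}(\Gamma)\subseteq\Upsilon_u^{-1}\Gamma_0^*(M)\Upsilon_u$, which is the assertion. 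The only genuinely delicate point is the converse inclusion $S\supseteq(M|1)$-snake in the middle paragraph: Lemma~\ref{fixed lattices of 1 Atkin-Lehner involution} by itself only bounds $S$ from above, whereas Conway's characterization of the normalizer needs $\sigma$ to preserve the \emph{entire} snake, so one must separately check that every $L_{d,0}$ with $d\mid M$ is $\tilde{\Gamma}$-fixed — either through the containment $\tilde{\Gamma}\subseteq\Gamma_0(M)$ above, or by a short direct computation with the action \eqref{eq1}.
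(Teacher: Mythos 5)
Your proof is correct and follows essentially the same route as the paper: conjugate by $\Upsilon_u$, identify the $\tilde{\Gamma}$-fixed points of the Big Picture with the $(M|1)$-snake via Lemma \ref{fixed lattices of 1 Atkin-Lehner involution}, deduce that every element of $\mathcal{N}(\tilde{\Gamma})$ preserves the snake set-wise, and apply Theorem \ref{Conway} with $h=1$ since $4,9\nmid M$. The only difference is that you spell out the easy converse inclusion (every point of the snake is fixed by $\tilde{\Gamma}$, via $\tilde{\Gamma}\subseteq\Gamma_0(M)$), a step the paper uses implicitly when it writes $\sigma^{-1}\tilde{\Gamma}\sigma(X)=X$ for $X$ in the snake.
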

\begin{proof}
The conjugation of $\Gamma$ by $\Upsilon_u$ is $\tilde{\Gamma}:=\tilde{\Gamma}^u_{M u}$. Recall that by Lemma \ref{fixed lattices of 1 Atkin-Lehner involution}, the lattices fixed under $\tilde{\Gamma}$ forms a $(M|1)$-snake. For $X\in (M|1)$-snake and $\sigma\in N(\tilde{\Gamma})$, we have
$$\sigma^{-1} \tilde{\Gamma} \sigma(X)=X, \; i.e.,\;\; \tilde{\Gamma} \sigma(X)=\sigma(X).$$
Thus $\tilde{\Gamma}$ fixes $\sigma(X)$, consequently $\sigma(X)\in (M|1)$-snake. Therefore we obtain that $\sigma$ set-wise fixes the $(M|1)$-snake. {By Theorem \ref{Conway}, we conclude that the normalizer of $\tilde{\Gamma}$ is contained in the group $\Gamma_0(M/1|1)+=\Gamma_0^*(M)$}. The result follows.
\end{proof}

Let us study the general situation. For certain Atkin-Lehner involutions $w_{d_1},\ldots, w_{d_n}$, we write $|\langle w_{d_1}, \ldots, w_{d_n}\rangle|$ for $|\langle \Gamma_0(N), w_{d_1}, \ldots, w_{d_n}\rangle/\Gamma_0(N)|$.

\begin{thm}\label{MainThm}
	Let $N\in \N$ such that $4,9\nmid N$ and $u_1^2,\ldots, u_k^2$ be divisors of $N$ such that $u_i^2||N$ for $i=1,\ldots, k$ and $|\langle w_{u_1^2},\ldots,w_{u_k^2}\rangle|=2^k$. Then the normalizer of $\langle \Gamma_0(N), w_{u_1^2}, \ldots, w_{u_k^2}\rangle$ is a subgroup of $\Gamma_0^*(\frac{N}{\mathrm{lcm}(u_1^2,u_2^2,\ldots, u_k^2)})$ conjugated by $\Upsilon^{-1}$, where $\Upsilon:=\Upsilon_{\mathrm{lcm}(u_1,\ldots, u_k)}$.
	
	 Moreover, if $W_N=\langle w_{u_1^2}, \ldots, w_{u_k^2}, w_{v_{k+1}},\ldots,w_{v_n}\rangle\leq B(N)$ such that $| \langle w_{v_{k+1}},\ldots,w_{v_n}\rangle|=2^{n-k}$ and for any Atkin-Lehner involution $w_d\in\langle \Gamma_0(N), w_{v_{k+1}},\ldots,w_{v_n}\rangle$, $d$ is not a perfect square, then the normalizer of $\Gamma_0(N)+W_N$ is a subgroup of $\Gamma_0^*(M)$ conjugated by $\Upsilon^{-1}$ where $M=\frac{N}{\mathrm{lcm}(u_1^2,u_2^2,\ldots, u_k^2)}$. In general, we have
	$$\mathcal{N}(\langle \Gamma_0(N),W_N\rangle)\leq \mathcal{N}(\langle \Gamma_0(N),w_{u_1^2},\ldots,w_{u_k^2}\rangle).$$
\end{thm}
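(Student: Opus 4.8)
The plan is to bootstrap from the one-involution case (Corollary \ref{normalizer as a subgroup for one square Atkin-Lehner}) to the case of $k$ square Atkin-Lehner involutions $w_{u_1^2},\dots,w_{u_k^2}$ by iterating the conjugation trick, and then to reduce the mixed case (some squares, some non-squares) to the earlier subsection's result. Set $u:=\mathrm{lcm}(u_1,\dots,u_k)$ and $\Upsilon:=\Upsilon_u$, and write $N=Mu^2$ with $(M,u^2)=1$; note that because $4,9\nmid N$ and each $u_i^2\|N$, the $u_i$ are pairwise coprime, so $u^2=\prod u_i^2\|N$ and $\mathrm{lcm}(u_1^2,\dots,u_k^2)=u^2$, whence $M=N/\mathrm{lcm}(u_1^2,\dots,u_k^2)$. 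The first step is to conjugate $\langle\Gamma_0(N),w_{u_1^2},\dots,w_{u_k^2}\rangle$ by $\Upsilon$ and, using Lemma \ref{lemabis} (applied successively, or its evident multi-prime generalization), identify the conjugated group $\tilde\Gamma:=\Upsilon\langle\Gamma_0(N),w_{u_1^2},\dots,w_{u_k^2}\rangle\Upsilon^{-1}$ as a subgroup of $\GL_2(\Q)$ sitting between $\tilde\Gamma_0^u(uM)$-type congruence data and $\Gamma_0^*(M)$; this is the step that requires the most care with coprimality and with checking that $\Upsilon_u$ simultaneously "straightens" all the $w_{u_i^2,N}$ into integral shape (the point being that $u$ is exactly divisible by each $u_i$ and the primes are disjoint).

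The second step is the lattice computation: show that the points $L_{s,g/t}$ of the Big Picture fixed by $\tilde\Gamma$ are exactly the $L_{d,0}$ with $d\mid M$, i.e. they form the $(M|1)$-snake. This is the $k$-fold analogue of Lemma \ref{fixed lattices of 1 Atkin-Lehner involution}; since $\langle\Gamma_0(N),w_{u_1^2},\dots,w_{u_k^2}\rangle\supseteq\langle\Gamma_0(N),w_{u_i^2}\rangle$ for each $i$, the set of fixed lattices is contained in the intersection of the corresponding fixed-lattice sets, and one checks the intersection of the $(M_i|1)$-snakes (where $N=M_iu_i^2$) collapses to the $(M|1)$-snake; conversely every $L_{d,0}$ with $d\mid M$ is fixed because $\Gamma_0(N)$ fixes it and each $w_{u_i^2,N}$ acts trivially on it after one observes $w_{u_i^2,N}*L_{d,0}=L_{d,0}$ for $d\mid M=N/u^2$. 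Then the standard orbit argument — verbatim as in the proof of Corollary \ref{normalizer as a subgroup for one square Atkin-Lehner}: for $\sigma\in\mathcal{N}(\tilde\Gamma)$ and $X$ in the $(M|1)$-snake, $\tilde\Gamma\sigma(X)=\sigma\tilde\Gamma(X)=\sigma(X)$, so $\sigma(X)$ is fixed by $\tilde\Gamma$, hence lies in the snake — together with Theorem \ref{Conway} (with $h=1$ since $4,9\nmid M$) gives $\mathcal{N}(\tilde\Gamma)\subseteq\Gamma_0^*(M)$, i.e. $\mathcal{N}(\langle\Gamma_0(N),w_{u_1^2},\dots,w_{u_k^2}\rangle)\subseteq\Upsilon^{-1}\Gamma_0^*(M)\Upsilon$.

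For the mixed case, let $W_N=\langle w_{u_1^2},\dots,w_{u_k^2},w_{v_{k+1}},\dots,w_{v_n}\rangle$ with the non-square part $\langle\Gamma_0(N),w_{v_{k+1}},\dots,w_{v_n}\rangle$ containing no square Atkin-Lehner involution. After conjugating by $\Upsilon$, Lemma \ref{lemabis} sends each $w_{v_j,N}$ (for $v_j$ not a perfect square) to some $w_{v_j',M}$ or $w_{v_j',uM}$ which is still not in $\PSL_2(\Q)$; so $\tilde\Gamma':=\Upsilon(\Gamma_0(N)+W_N)\Upsilon^{-1}$ is of the form "(the straightened square part) $+$ (genuinely irrational Atkin-Lehner involutions)". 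The fixed lattices of $\tilde\Gamma'$ are contained in those of its square part, hence in the $(M|1)$-snake; and one re-runs the argument of Lemma \ref{orbit is a subset of a snake}/Lemma \ref{lemma 2.6 section 2} adapted to $\tilde\Gamma'$ in place of $\Gamma_0(N)+W_N$ — the key input being that $\tilde\Gamma'\cap\PSL_2(\Q)$ equals the straightened square part, so the orbit-size counting that forced fixed points into the snake still goes through. This yields $\mathcal{N}(\Gamma_0(N)+W_N)\subseteq\Upsilon^{-1}\Gamma_0^*(M)\Upsilon$. Finally, the inclusion $\mathcal{N}(\langle\Gamma_0(N),W_N\rangle)\subseteq\mathcal{N}(\langle\Gamma_0(N),w_{u_1^2},\dots,w_{u_k^2}\rangle)$ is immediate from the two displayed containments, since $\Upsilon^{-1}\Gamma_0^*(M)\Upsilon\subseteq\Upsilon^{-1}\Gamma_0^*(M)\Upsilon$ trivially and the right-hand normalizer was shown to contain $\Upsilon^{-1}\Gamma_0^*(M)\Upsilon$ while the left is contained in it; more honestly, both normalizers are sandwiched between $\langle\Gamma_0(N),w_d:d\|N\rangle\cap(\text{relevant conjugate})$ and $\Upsilon^{-1}\Gamma_0^*(M)\Upsilon$, and any $\sigma$ normalizing the larger group $\langle\Gamma_0(N),W_N\rangle$ in particular preserves the $(M|1)$-snake (fixed set of the square part, which is a subgroup of $\langle\Gamma_0(N),W_N\rangle$ that any normalizer of the latter must carry to... ) — this last comparison is the point that needs a clean argument: the cleanest route is to observe that the square part is the unique maximal subgroup of $\langle\Gamma_0(N),W_N\rangle$ lying in $\PSL_2(\Q)$ up to the $\Upsilon$-conjugation, so it is characteristic, and hence normalized by $\mathcal{N}(\langle\Gamma_0(N),W_N\rangle)$.

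The main obstacle I expect is the second step of the first paragraph — the $k$-fold lattice computation showing the fixed set is exactly the $(M|1)$-snake and not something larger — because the intersection-of-snakes argument needs the disjointness of the prime supports of the $u_i$ to be used correctly, and one must rule out "diagonal" fixed lattices $L_{s,g/t}$ with $t>1$ exactly as in Cases I–II of Lemma \ref{fixed lattices of 1 Atkin-Lehner involution}, now with several primes $\ell\mid t$ in play; reusing the one-involution lemma as a black box (via $\langle\Gamma_0(N),w_{u_i^2}\rangle\le\langle\Gamma_0(N),w_{u_1^2},\dots,w_{u_k^2}\rangle$) avoids redoing that analysis and is the route I would take.
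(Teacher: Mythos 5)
Your overall architecture --- conjugate by $\Upsilon=\Upsilon_{\mathrm{lcm}(u_1,\ldots,u_k)}$, show that the classes fixed by the conjugated group form the $(M|1)$-snake, run the snake-invariance argument with Theorem \ref{Conway}, and then separate the non-square involutions by the rationality/orbit argument of \S\ref{subsection 2.2} --- is the paper's, and your treatment of the second (mixed) statement is essentially the paper's proof. The gaps are in the first step. First, your claim that $4,9\nmid N$ and $u_i^2||N$ force the $u_i$ to be pairwise coprime is false: take $N=5^2\cdot 7^2\cdot q$, $u_1=35$, $u_2=5$ (the hypothesis $|\langle w_{u_1^2},w_{u_2^2}\rangle|=2^2$ holds). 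The conclusion $u^2||N$ for $u=\mathrm{lcm}(u_1,\ldots,u_k)$ is still true, but only because $u_i^2||N$ forces $v_p(u_i)=v_p(u_j)$ at every common prime $p$; the paper's $k=2$ computation is written precisely to handle $(u_1,u_2)>1$. Second, and more seriously, the sets you propose to intersect are misidentified: the fixed classes of $\Upsilon\langle\Gamma_0(N),w_{u_i^2}\rangle\Upsilon^{-1}$ are not the $(M_i|1)$-snake (that is the fixed set after conjugating by $\Upsilon_{u_i}$, not by $\Upsilon$) but its rescaling $\{L_{d\,u_i/u,\,0}:\ d\mid N/u_i^2\}$, which contains classes with non-integral first parameter; likewise the unconjugated $w_{u_i^2,N}$ does \emph{not} fix $L_{d,0}$ for $d\mid M$ (already $w_{p^2,p^2q^2}$ does not fix $L_{1,0}$), only its $\Upsilon$-conjugate does, so your ``converse'' verification is aimed at the wrong group. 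The corrected intersection does collapse to $\{L_{s,0}:\ s\in\N,\ s\mid M\}$: membership in the $i$-th set means $s\,u/u_i\in\Z$ and $(s\,u/u_i)\mid N/u_i^2$, the first condition forces $s\in\Z$ because every prime of $u$ divides some $u_i$ to full exponent, and then $s\mid \gcd_i(N/u_i^2)=N/\mathrm{lcm}(u_1^2,\ldots,u_k^2)$ --- but this short valuation argument is exactly what you have not written, and it is the content the paper obtains instead by its explicit $k=2$ matrix computation (showing $s\in\Z$ and $s\mid N/\mathrm{lcm}(u_1^2,u_2^2)$) followed by induction. With that supplied, your black-box use of Lemma \ref{fixed lattices of 1 Atkin-Lehner involution} is a legitimate, even slightly cleaner, alternative to the paper's induction.

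The final inclusion also needs repair. Your first argument rests on the claim that $\mathcal{N}(\langle\Gamma_0(N),w_{u_1^2},\ldots,w_{u_k^2}\rangle)$ contains $\Upsilon^{-1}\Gamma_0^*(M)\Upsilon$; that is false in general --- by Section \ref{section exact normalizer computation} this normalizer is typically just $\Gamma_0^*(N)$, a proper subgroup of $\Upsilon^{-1}\Gamma_0^*(M)\Upsilon$ whenever $u>1$ --- so the sandwich you describe proves nothing. The fallback (``the square part is the unique maximal subgroup inside $\mathrm{PSL}_2(\Q)$, hence characteristic, hence normalized'') is also incomplete: being the intersection $\langle\Gamma_0(N),W_N\rangle\cap\mathrm{PSL}_2(\Q)$ only makes the square part stable under conjugations that preserve rationality, and an arbitrary element of $\mathrm{PSL}_2(\R)$ normalizing $\langle\Gamma_0(N),W_N\rangle$ has no a priori reason to preserve $\mathrm{PSL}_2(\Q)$. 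The missing input is the second statement itself: once $\mathcal{N}(\Gamma_0(N)+W_N)\subseteq\Upsilon^{-1}\Gamma_0^*(M)\Upsilon$ is known, every element of the right-hand group is $\tfrac{1}{\sqrt{e}}$ times a rational matrix, so conjugation by it does preserve membership in $\mathrm{PSL}_2(\Q)$; combined with your (correct) observation that $\langle\Gamma_0(N),W_N\rangle\cap\mathrm{PSL}_2(\Q)=\langle\Gamma_0(N),w_{u_1^2},\ldots,w_{u_k^2}\rangle$, this yields $\mathcal{N}(\langle\Gamma_0(N),W_N\rangle)\leq\mathcal{N}(\langle\Gamma_0(N),w_{u_1^2},\ldots,w_{u_k^2}\rangle)$, which is exactly how the paper closes its proof.
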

\begin{proof}
    We first prove the statement regarding the normalizer of $\langle \Gamma_0(N),w_{u_1^2},\ldots,w_{u_k^2}\rangle$.
    The case $k=1$ follows from Corollary \ref{normalizer as a subgroup for one square Atkin-Lehner}. Consider the group $\langle\Gamma_0(N), w_{u_1^2}, w_{u_2^2}\rangle$.

    Taking conjugation by $\Upsilon_{u_1}$, we get the group $\langle \tilde{\Gamma}^{u_1}_{Mu_1}, \Upsilon_{u_1}w_{u_2^2}\Upsilon_{u_1}^{-1}\rangle$, where $M=\frac{N}{u_1^2}$. Recall that by Lemma \ref{fixed lattices of 1 Atkin-Lehner involution}, the lattices fixed by $\tilde{\Gamma}^{u_1}_{Mu_1}$ forms a $(M|1)$-snake.

    Now taking conjugation by $\Upsilon_{{u_2}/{(u_1,u_2)}}$, we obtain the group $\tilde{W}_{u_1,u_2}:=\langle \Upsilon_{u_2/(u_1,u_2)}\tilde{\Gamma}^{u_1}_{Mu_1}\Upsilon_{u_2/(u_1,u_2)}^{-1}, \Upsilon_{\frac{u_1u_2}{(u_1,u_2)}}w_{u_2^2}\Upsilon_{\frac{u_1u_2}{(u_1,u_2)}}^{-1}\rangle.$

    If $\tilde{W}_{u_1,u_2}$ fixes the lattice $L_{s,g/t}$, then the group $\tilde{\Gamma}^{u_1}_{Mu_1}$ fixes the lattice $$\psmat{1}{0}{0}{\frac{(u_1,u_2)}{u_2}}\cdot L_{s,g/t}=\langle se_1+\frac{(u_1,u_2)}{u_2}\frac{g}{t}e_2, \frac{(u_1,u_2)}{u_2}e_2\rangle_{Lat}
		=\langle s\frac{u_2}{(u_1,u_2)}e_1+\frac{g}{t}e_2,e_2\rangle_{Lat}
		= L_{s\frac{u_2}{(u_1,u_2)},g/t}.$$
    By Lemma \ref{fixed lattices of 1 Atkin-Lehner involution}, we have $t=1$ and $1|s\frac{u_2}{(u_1,u_2)}|M$. Hence the lattice $L_{s,g/t}$ is of the form $L_{s,0}$, where $1|s\frac{u_2}{(u_1,u_2)}|M$.

Let $x,y,w\in \Z$ such that $u_2^2xw-\frac{N}{u_2^2}y=1$, then $\psmat{u_2x}{\frac{yu_1}{(u_1,u_2)}}{\frac{N}{u_2}\frac{(u_1,u_2)}{u_1u_2}}{u_2w}\in \tilde{W}_{u_1,u_2}$.
    If $\tilde{W}_{u_1,u_2}$ fixes the lattice $L_{s,0}$, then the matrix $\psmat{u_2x}{\frac{yu_1}{(u_1,u_2)}}{\frac{N}{u_2}\frac{(u_1,u_2)}{u_1u_2}}{u_2w}$ fixes the lattice $L_{s,0}$. Hence there exist $d_1,d_2,d_3,d_4\in \Z$ such that
    \begin{equation}\label{eq 1.19 new}
        s(u_2xe_1+\frac{u_1y}{(u_1,u_2)}e_2)=d_1se_1+d_2e_2,
    \end{equation}
    \begin{equation}\label{eq 1.20 new}
        \frac{N}{u_2}\frac{(u_1,u_2)}{u_1u_2}e_1+u_2we_2=d_3se_1+d_4e_2.
    \end{equation}
    From \eqref{eq 1.19 new}, we have $d_1=u_2x$ and $d_2=s\frac{u_1y}{(u_1,u_2)}$. Recall that $s\frac{u_2}{(u_1,u_2)}\in \Z$. If $s\notin \Z$, then there exists a prime $p$ such that $v_p(s)<0$ but $v_p(\frac{u_2}{(u_1,u_2)})>0$ such that $v_p(s)+v_p(\frac{u_2}{(u_1,u_2)})\geq 0$. In particular we have $v_p(u_2)>0$. Since $s\frac{u_1}{(u_1,u_2)}y\in \Z$ and $\frac{u_1}{(u_1,u_2)}, \frac{u_2}{(u_1,u_2)}$ has no common factor, we must have $v_p(y)>0$. This contradict the assumption that $u_2^2xw-\frac{N}{u_2^2}y=1$. Hence $s\in \Z$.

    From \eqref{eq 1.20 new}, we have $d_3s=\frac{N}{u_2}\frac{(u_1,u_2)}{u_1u_2}$, i.e., $s|\frac{N(u_1,u_2)}{u_1u_2^2}$. On the other hand we also have $s|\frac{N(u_1,u_2)}{u_1^2u_2}$. Since $\big(\frac{N(u_1,u_2)}{u_1u_2^2}, \frac{N(u_1,u_2)}{u_1^2,u_2} \big)=\frac{N}{\mathrm{lcm}(u_1^2,u_2^2)}$, we conclude that $s|\frac{N}{\mathrm{lcm}(u_1^2,u_2^2)}$. This completes the proof for the case $k=2$.
    
    For the general case assume that { $w_{u_t^2}\notin \langle \Gamma_0(N), w_{u_1^2}, \ldots, w_{u_{t-1}^2}\rangle$}. Now the result follows by applying induction on { $\Upsilon_{\frac{u_t \mathrm{lcm}(u_1, \ldots, u_{t-1})}{(u_t,\mathrm{lcm}(u_1,\ldots, u_{t-1}))}}\langle \Gamma_0(N), w_{u_1^2}, \ldots, w_{u_{t-1}^2},w_{u_{t}^2}\rangle \Upsilon_{\frac{u_t \mathrm{lcm}(u_1, \ldots, u_{t-1})}{(u_t,\mathrm{lcm}(u_1,\ldots, u_{t-1}))}}^{-1}$}	and proceeding similarly as in the case $k=2$.
	
	
	We now prove the statement regarding the normalizer of $\langle \Gamma_0(N),W_N\rangle$, which is inspired from ideas of \cite{Lang}, and follows from the arguments introduced in \S 1.2.    
	
	Consider the group $\tilde{\Gamma}:=\Upsilon\langle \Gamma_0(N),w_{u_1^2},\ldots,w_{u_k^2}\rangle \Upsilon^{-1}$, and denote by $\delta_{v_t}=\Upsilon w_{v_t,N}\Upsilon^{-1}$ for $t\in\{k+1,\ldots,n\}$, and write $M=N/\mathrm{lcm}(u_1^2,\ldots,u_k^2)$. Observe that the assumptions on $W_N$ imply $\delta_{v_t}\notin \mathrm{PGL}_2(\Q)$ and $\delta_{v_{t1}}^{-1}\delta_{v_{t2}}\notin \mathrm{PGL}_2(\Q)$ for $t1\neq t2$, and $\delta_{v_t}^2\in\tilde{\Gamma}$.

	Now consider the action of $\langle \tilde{\Gamma},\delta_{v_{k+1}},\ldots, \delta_{v_n}\rangle$ on the Big Picture. We use the following notations:
	\begin{itemize}
		\item $\tilde{t}: \ \langle \tilde{\Gamma},\delta_{v_{k+1}},\ldots,  \delta_{v_n}\rangle$ orbit of size $2^{n-k}$, $\bullet$ $\tilde{T}_N:$ The set of all such $\tilde{t}$, orbits of size $2^{n-k}$,
		\item $\tilde{\mathcal{C}}_{W_N}$ denotes the representatives of distinct left cosets of $\tilde{\Gamma}$ in $\langle \tilde{\Gamma},\delta_{v_{k+1}},\ldots, \delta_{v_n}\rangle$.
	\end{itemize}
  Following the argument described in \S \ref{subsection 2.2} we obtain:
  \begin{itemize}
  	\item For $X\in (M|1)$-snake, 
  	$\{\sigma(X): \sigma\in \langle \tilde{\Gamma},\delta_{v_{k+1}},\ldots, \delta_{v_n}\rangle\}=\{X,\delta(X): \delta \in \tilde{\mathcal{C}}_{W_N}\setminus\{\mathrm{id}\}\}$ is a member of $\tilde{T}_N$. 
  	\item If $\tilde{t}\in\tilde{T}_N$, then $\tilde{t}$ is a subset of the $(M|1)$-snake.
  \end{itemize}
 
 Using these properties and arguing similarly as in the proofs of Lemma  \ref{number of elements in a +we1,..., wen orbit} and Lemma \ref{orbit is a subset of a snake}, we obtain $X\in (M|1)$-snake if and only if $X\in \tilde{t}\in \tilde{T}_N$. 
 

Now for each $\sigma\in \mathcal{N}(\langle \tilde{\Gamma},\delta_{v_{k+1}},\ldots, \delta_{v_n}\rangle)$, and $\tilde{t}\in \tilde{T}_N$, we have
$$(\langle \tilde{\Gamma},\delta_{v_{k+1}},\ldots, \delta_{v_n}\rangle)\sigma(\tilde{t})=\sigma(\langle \tilde{\Gamma},\delta_{v_{k+1}},\ldots, \delta_{v_n}\rangle)(\tilde{t})=\sigma(\tilde{t}).$$
Therefore $(M|1)$-snake is fixed by $\sigma$. Since $4,9 \nmid N$, using Theorem \ref{Conway} we conclude that $\sigma\in \Gamma_0(M|1)+$. This proves the second statement.
	
	For the last statement, write $\Gamma_1:=\langle\Gamma_0(N),w_{u_1^2},\ldots,w_{u_k^2} \rangle$ and $\Gamma_2:=\langle\Gamma_1,w_{v_{k+1}},\cdots, w_{v_n}\rangle$.

{ 	
	Any element of $\Gamma_2$ can be written in the form $w_{u^2}^{n_0}\prod_{i=k+1}^n w_{v_i}^{n_i}\gamma$, where $n_0,n_i\in \{0,1\}$, $u^2||\mathrm{lcm}(u_1^2,\ldots, u_k^2), w_{u^2}\in \Gamma_1$ and $\gamma\in \Gamma_0(N)$. 
	Any element of $\Gamma_1$ is of the form $w_{{u^\prime}^2}^{m_0}\gamma^\prime$, where $m_0\in \{0,1\}$, ${u^\prime}^2||\mathrm{lcm}(u_1^2,\ldots, u_k^2), w_{{u^\prime}^2}\in \Gamma_1$ and $\gamma^\prime \in \Gamma_0(N)$. Note that $w_{u^2}, \gamma\in \mathrm{PSL}_2(\Q)$ for $\gamma\in \Gamma_0(N)$ and $u^2||\mathrm{lcm}(u_1^2,\ldots, u_k^2)$. By the assumptions on $v_i$'s, we have $\prod_{i=k+1}^n w_{v_i}^{n_i}\not\in \mathrm{PSL}_2(\Q)$ for $n_i\in \{0,1\}$ with at least one of  $n_i$'s is { non-zero}. 
	
	Now consider $\tilde{\sigma}\in \mathcal{N}(\Gamma_2)$ and $w_{{u^\prime}^2}^{m_0}\gamma^\prime\in \Gamma_1$, where $m_0\in \{0,1\}$, ${u^\prime}^2||\mathrm{lcm}(u_1^2,\ldots, u_k^2)$ and $\gamma^\prime \in \Gamma_0(N)$. Since $\mathcal{N}(\Gamma_2)\subseteq \Upsilon^{-1}\Gamma_0^*(M)\Upsilon$, we have 
	$\tilde{\sigma}w_{{u^\prime}^2}^{m_0}\gamma^\prime\tilde{\sigma}^{-1}\in \mathrm{PSL}_2(\Q),$
	 $\tilde{\sigma}w_{{u^\prime}^2}^{m_0}\gamma^\prime\tilde{\sigma}^{-1}\in \Gamma_2$. Thus we can write
	\begin{equation}\label{eq 1.17 new}
		\tilde{\sigma}w_{{u^\prime}^2}^{m_0}\gamma^\prime\tilde{\sigma}^{-1}=w_{u^2}^{n_0}\prod_{i=k+1}^n w_{v_i}^{n_i}\gamma \in \Gamma_2, \ \mathrm{where} \ n_0,n_i\in \{0,1\},  \ w_{u^2}\in \Gamma_1 \ \mathrm{and} \ \gamma\in \Gamma_0(N).
	\end{equation}
	If $n_i>0$ for some $i$, then { from} \eqref{eq 1.17 new}, we have $\tilde{\sigma}w_{{u^\prime}^2}^{m_0}\gamma^\prime\tilde{\sigma}^{-1}\in \mathrm{PSL}_2(\Q)$ but $w_{u^2}^{n_0}\prod_{i=k+1}^n w_{v_i}^{n_i}\gamma\not\in \mathrm{PSL}_2(\Q)$, which is a contradiction. Hence $n_i=0$ for $k+1\leq i \leq n$. Therefore we conclude that $\tilde{\sigma}w_{{u^\prime}^2}^{m_0}\gamma^\prime\tilde{\sigma}^{-1}\in \Gamma_1$, i.e. $\tilde{\sigma}\in \mathcal{N}(\Gamma_1)$. The result follows.
	}
	\end{proof}

\section{Exact normalizer of $\langle \Gamma_0(N),W\rangle$} \label{section exact normalizer computation}
In this section we completely determine the normalizer of $\langle \Gamma_0(N),W\rangle$ where $4,9\nmid N$ and $W$ is a subgroup generated by certain Atkin-Lehner involutions. We compute it in two steps. First we compute the exact normalizer of $\langle \Gamma_0(N), w_{u_1^2}, w_{u_2^2}, \ldots, w_{u_k^2}\rangle$. Then with the help of this result we compute the exact normalizer of $\langle \Gamma_0(N), W\rangle$ for any arbitrary subgroup $W$.

Throughout the section we assume that $4,9\nmid N$. We introduce the following notations:

 For a matrix $A=\psmat{\alpha_{0,0}}{\alpha_{0,1}}{\alpha_{1,0}}{\alpha_{1,1}}$, we write $A[i,j]:=\alpha_{i,j}$ for $i,j\in \{0,1\}$.  For a prime $p$ and $n\in \N$, we use the notation $v_p(n)$ to denote the unique integer $n_p$ such that $p^{n_p}||n$.
Consider the group $\langle\Gamma_0(N), w_{u_1^2}, \ldots, w_{u_k^2}\rangle$, where $u_i^2||N$. 
Since $\mathrm{lcm}(u_1^2,\ldots, u_k^2)||N$, we have $M:=\frac{N}{\mathrm{lcm}(u_1^2,\ldots, u_k^2)}\in \Z$. { We define}
$$\Gamma_{u}:=\Big\{\psmat{ux}{y\cdot \frac{\mathrm{lcm}(u_1,\ldots, u_k)}{{u}}}{M\cdot \frac{\mathrm{lcm}(u_1,\ldots, u_k)}{{u}}\cdot z}{uw}\in \Gamma_0(M): x,y,z,w\in \Z\Big\} \ \mathrm{for\ any} \ { u||\mathrm{lcm}(u_1,\ldots, u_k)}, \ u>1, \ \mathrm{and}$$
$$\Gamma_{(u_1,\ldots, u_k)}:=\Big\{\psmat{x}{y\cdot {\mathrm{lcm}(u_1,\ldots, u_k)}}{M\cdot {\mathrm{lcm}(u_1,\ldots, u_k)}\cdot z}{w}\in \Gamma_0(M): x,y,z,w\in \Z\Big\}.$$

Observe that any element of $\langle\Gamma_0(N), w_{u_1^2}, \ldots, w_{u_k^2}\rangle\char`\\ \Gamma_0(N)$ can be written in the form $w_{{u^\prime}^2}\gamma^\prime$, for some ${u^\prime}||\mathrm{lcm}(u_1,\ldots, u_k)$ and $\gamma^\prime\in \Gamma_0(N)$.
Furthermore, for any $w_{{u^\prime}^2}\gamma^\prime\in \langle\Gamma_0(N), w_{u_1^2}, \ldots, w_{u_k^2}\rangle \char`\\ \Gamma_0(N)$ it is easy to check that $\Upsilon_{\mathrm{lcm}(u_1,\ldots, u_k)}w_{{u^\prime}^2}\Gamma_0(N)\Upsilon_{\mathrm{lcm}(u_1,\ldots, u_k)}^{-1}=\Gamma_{u^\prime}$ and $\Gamma_{(u_1,\ldots, u_k)}=\Upsilon_{\mathrm{lcm}(u_1,\ldots, u_k)}\Gamma_0(N)\Upsilon_{\mathrm{lcm}(u_1,\ldots, u_k)}^{-1}$. Therefore
$$\Upsilon_{\mathrm{lcm}(u_1,\ldots, u_k)}\langle\Gamma_0(N), w_{u_1^2}, \ldots, w_{u_k^2}\rangle\Upsilon_{\mathrm{lcm}(u_1,\ldots, u_k)}^{-1}=\langle \Gamma_{(u_1,\ldots, u_k)}, \Gamma_{u_1}, \ldots, \Gamma_{u_k}\rangle.$$
Furthermore if $\delta \in\langle \Gamma_{(u_1,\ldots, u_k)}, \Gamma_{u_1}, \ldots, \Gamma_{u_k}\rangle\char`\\ \Gamma_{(u_1,\ldots, u_k)}$, then $\delta\in \Gamma_{u^\prime}$ for some ${u^\prime}||\mathrm{lcm}(u_1,u_2, \ldots, u_k)$. In particular we have $\langle \Gamma_{(u_1,\ldots, u_k)}, \Gamma_{u^\prime}\rangle\char`\\ \Gamma_{(u_1,\ldots, u_k)}=\Gamma_{u^\prime}$.
 We mention some basic facts about $w_{u_i^2}$'s and $\Gamma_{u_i}$'s.
\begin{itemize}
\item If $v_p(\mathrm{lcm}(u_1,u_2,\ldots, u_k))>0$ for some prime $p$, then $2v_p(\mathrm{lcm}(u_1,u_2,\ldots, u_k))=v_p(N)$.
{\item For $i\ne j$, $w_{u_i^2}w_{u_j^2}\in w_{u^2}\Gamma_0(N)$ with $u||\mathrm{lcm}(u_i,u_j)$, in particular { $u:=u_iu_j/(u_1,u_2)^2$.}
\item If $v_p(u_i),v_p(u_j)>0$, then $v_p(u_i)=v_p(u_j)=v_p(\mathrm{lcm}(u_1,u_2,\ldots, u_k))$.}
\item If $\delta \in\langle \Gamma_{(u_1,\ldots, u_k)}, \Gamma_{u_1}, \ldots, \Gamma_{u_k}\rangle\char`\\ \Gamma_{(u_1,\ldots, u_k)}$, then $\delta\in \Gamma_{u^\prime}$ for some ${u^\prime}||\mathrm{lcm}(u_1,u_2, \ldots, u_k)$. In particular we have $\langle \Gamma_{(u_1,\ldots, u_k)}, \Gamma_{u^\prime}\rangle\char`\\ \Gamma_{(u_1,\ldots, u_k)}=\Gamma_{u^\prime}$.
\item Furthermore, if $(\langle \Gamma_{(u_1,\ldots, u_k)}, \Gamma_{u_1}, \ldots, \Gamma_{u_k}\rangle \char`\\ \Gamma_{(u_1,\ldots, u_k)})\cap \Gamma_{u^\prime}$ is non-empty and $v_p(u^\prime)>0$, then $$v_p(\mathrm{lcm}(u_1,u_2,\ldots, u_k))=v_p(u^\prime).$$ 
\end{itemize}
We recall the following result from Theorem \ref{MainThm}.
\begin{lema}\label{Main theorem for computing normaliser General case new version}
Let $N,u_i\in \N$ such that $4,9\nmid N$ and $u_i^2||N$ for $i\in \{1,\ldots,k\}$. Then $\mathcal{N}(\langle \Gamma_{(u_1,\ldots, u_k)}, \Gamma_{u_1}, \ldots, \Gamma_{u_k}\rangle)$ is a subgroup of $\Gamma_0^*(M)$ (recall that $M:=\frac{N}{\mathrm{lcm}(u_1^2,\ldots, u_k^2)}$). Suppose $\{h_1,h_2,\ldots, h_n\}$ is a complete set of coset representatives of $\Gamma_0^*(M)/\langle \Gamma_{(u_1,\ldots, u_k)}, \Gamma_{u_1}, \ldots, \Gamma_{u_k}\rangle$, and consider the set 
    $$\Delta=\{h_i: h_i\gamma h_i^{-1}\in \langle \Gamma_{(u_1,\ldots, u_k)}, \Gamma_{u_1}, \ldots, \Gamma_{u_k}\rangle \ {for \ every} \ \gamma\in \langle \Gamma_{(u_1,\ldots, u_k)}, \Gamma_{u_1}, \ldots, \Gamma_{u_k}\rangle \}.$$
    Then $\mathcal{N}(\langle \Gamma_{(u_1,\ldots, u_k)}, \Gamma_{u_1}, \ldots, \Gamma_{u_k}\rangle)$ is generated by $\{\gamma, h_i: \gamma \in \langle \Gamma_{(u_1,\ldots, u_k)}, \Gamma_{u_1}, \ldots, \Gamma_{u_k}\rangle, h_i\in \Delta \}$.
\end{lema}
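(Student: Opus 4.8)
The plan is to deduce the statement from the inclusion $\mathcal{N}(G)\subseteq\Gamma_0^*(M)$, which is just the conjugated form of Theorem \ref{MainThm} (so nothing new is needed for the first assertion), combined with a coset-counting argument inside $\Gamma_0^*(M)$. Throughout write $G:=\langle \Gamma_{(u_1,\ldots,u_k)},\Gamma_{u_1},\ldots,\Gamma_{u_k}\rangle$.

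First I would check that $G$ has finite index in $\Gamma_0^*(M)$, so that the list $\{h_1,\ldots,h_n\}$ is genuinely finite and $\Gamma_0^*(M)=\bigsqcup_{i=1}^n h_iG$. This holds because $G\supseteq \Gamma_{(u_1,\ldots,u_k)}$, which is a conjugate (by $\Upsilon_{\mathrm{lcm}(u_1,\ldots,u_k)}$) of the congruence group $\Gamma_0(N)$, hence a lattice in $\mathrm{PSL}_2(\R)$; since $\Gamma_0^*(M)$ is itself a lattice, the chain $\Gamma_{(u_1,\ldots,u_k)}\subseteq G\subseteq\Gamma_0^*(M)$ forces $[\Gamma_0^*(M):G]$ to be finite (it is a ratio of covolumes).

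Next, since $G\trianglelefteq\mathcal{N}(G)$ and $G\subseteq\mathcal{N}(G)\subseteq\Gamma_0^*(M)$, the subgroup $\mathcal{N}(G)$ is a union of left $G$-cosets, i.e. $\mathcal{N}(G)=\bigsqcup_{i\colon h_i\in\mathcal{N}(G)} h_iG$; here $h_iG\subseteq\mathcal{N}(G)$ is equivalent to $h_i\in\mathcal{N}(G)$. So it remains to identify which $h_i$ lie in $\mathcal{N}(G)$, and I claim $h_i\in\mathcal{N}(G)$ if and only if $h_i\in\Delta$, i.e. $h_iGh_i^{-1}=G$ if and only if $h_iGh_i^{-1}\subseteq G$. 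The forward direction is trivial; for the converse the key point is that $h_i\in\Gamma_0^*(M)$, so $h_iGh_i^{-1}$ is a subgroup of $h_i\Gamma_0^*(M)h_i^{-1}=\Gamma_0^*(M)$ with $[\Gamma_0^*(M):h_iGh_i^{-1}]=[\Gamma_0^*(M):G]=n$; a subgroup of index $n$ contained in another subgroup of index $n$ must coincide with it, whence $h_iGh_i^{-1}=G$. Combining, $\mathcal{N}(G)=\bigsqcup_{h_i\in\Delta}h_iG=\langle G, h_i\colon h_i\in\Delta\rangle$, which is exactly the claim.

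I expect the only delicate point to be the co-Hopfian-type step just described: for an abstract finitely generated group, $hGh^{-1}\subseteq G$ need not imply equality, and what rescues us is precisely that the conjugating element $h_i$ lies in the ambient lattice $\Gamma_0^*(M)$ in which $G$ already has finite index, so conjugation by $h_i$ preserves that index. Everything else is routine coset bookkeeping; in the applications of the later sections, verifying $h_i\in\Delta$ for a concrete list of coset representatives of $\Gamma_0^*(M)/G$ reduces to checking the condition $h_i\gamma h_i^{-1}\in G$ on a finite generating set of $G$.
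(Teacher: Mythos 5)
Your proof is correct and follows the route the paper intends: the inclusion $\mathcal{N}(\langle \Gamma_{(u_1,\ldots, u_k)}, \Gamma_{u_1}, \ldots, \Gamma_{u_k}\rangle)\subseteq\Gamma_0^*(M)$ is exactly the conjugated form of Theorem \ref{MainThm}, and the remaining coset bookkeeping is what the paper leaves implicit when it "recalls" the lemma. Your finite-index (covolume) argument upgrading the one-sided containment $h_iGh_i^{-1}\subseteq G$ in the definition of $\Delta$ to the equality $h_iGh_i^{-1}=G$ is precisely the right way to fill in that routine step, so the proposal matches the paper's approach with the details supplied.
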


Consider the set
$$S_{(u_1,\ldots,u_k),M}^\prime:=\Big\{\psmat{a}{b}{c}{d}\in \Gamma_0(M)\char`\\ \langle \Gamma_{(u_1,\ldots, u_k)}, \Gamma_{u_1}, \ldots, \Gamma_{u_k}\rangle: ac\equiv bd\equiv 0 \pmod {\mathrm{lcm}(u_1,\ldots,u_k)}\Big\}.$$
Observe that for $\psmat{a}{b}{c}{d}\in S_{(u_1,\ldots,u_k),M}^\prime$, if $u^\prime=(a,\mathrm{lcm}(u_1,\ldots,u_k))$ and $u^{\prime \prime}=(c,\mathrm{lcm}(u_1,\ldots,u_k))$, then 
$$\mathrm{lcm}(u_1,\ldots,u_k)=u^\prime u^{\prime\prime}, (b,\mathrm{lcm}(u_1,\ldots,u_k))=u^{\prime\prime} \ \mathrm{and} \ (d,\mathrm{lcm}(u_1,\ldots,u_k))=u^\prime.$$
{ Let $g:=\psmat{a}{b}{c}{d}\in S_{(u_1,\ldots,u_k),M}^\prime$ and $u^\prime=(a,\mathrm{lcm}(u_1,\ldots,u_k))$ (note that this implies $u^\prime||\mathrm{lcm}(u_1,\ldots,u_k)$ and ${u^\prime}^2||N$).} It is easy to check that $\Upsilon_{\mathrm{lcm}(u_1,\ldots, u_k)}^{-1}g \Upsilon_{\mathrm{lcm}(u_1,\ldots, u_k)}\in w_{{u^\prime}^2,N}\Gamma_0(N)$. Since $w_{{u^\prime}^2,N}\in \mathcal{N}(\langle\Gamma_0(N), w_{u_1^2}, \ldots, w_{u_k^2}\rangle)$, we have $g\in \mathcal{N}(\langle \Gamma_{(u_1,\ldots, u_k)}, \Gamma_{u_1}, \ldots, \Gamma_{u_k}\rangle)$. Therefore we obtain
\begin{lema}\label{S prime contained in the normalizer proposition}
$S_{(u_1,\ldots,u_k),M}^\prime\subseteq \mathcal{N}(\langle \Gamma_{(u_1,\ldots, u_k)}, \Gamma_{u_1}, \ldots, \Gamma_{u_k}\rangle)$.
\end{lema}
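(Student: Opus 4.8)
The plan is to show that every $g=\psmat{a}{b}{c}{d}\in S_{(u_1,\ldots,u_k),M}^\prime$ lies in $\mathcal{N}(\langle \Gamma_{(u_1,\ldots, u_k)}, \Gamma_{u_1}, \ldots, \Gamma_{u_k}\rangle)$ by transporting the situation to level $N$ via conjugation by $\Upsilon:=\Upsilon_{\mathrm{lcm}(u_1,\ldots,u_k)}$. Write $L:=\mathrm{lcm}(u_1,\ldots,u_k)$, $G:=\langle\Gamma_0(N),w_{u_1^2},\ldots,w_{u_k^2}\rangle$ and $\widetilde{G}:=\langle \Gamma_{(u_1,\ldots, u_k)}, \Gamma_{u_1}, \ldots, \Gamma_{u_k}\rangle$, so that $\widetilde{G}=\Upsilon G\Upsilon^{-1}$ by the identity established above. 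The argument will have three steps: (i) locate the $\Gamma_{(u_1,\ldots,u_k)}$-coset containing $g$, showing it equals $\Gamma_{u^\prime}=\Upsilon\, w_{{u^\prime}^2,N}\,\Gamma_0(N)\,\Upsilon^{-1}$ for a suitable exact divisor $u^\prime$ of $L$ with ${u^\prime}^2||N$; (ii) observe that $w_{{u^\prime}^2,N}$ normalizes $G$; (iii) conjugate back by $\Upsilon$ to deduce $g\in\mathcal{N}(\widetilde{G})$.

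For step (i) I would put $u^\prime:=(a,L)$ and $u^{\prime\prime}:=(c,L)$ and run the valuation bookkeeping. Since $\det g=1$, the two entries in each row and in each column of $g$ are coprime; a short case analysis at each prime $p\mid L$, using this together with the defining congruences $ac\equiv bd\equiv 0\pmod L$, forces $u^\prime u^{\prime\prime}=L$, $(b,L)=u^{\prime\prime}$ and $(d,L)=u^\prime$, so in particular $u^\prime||L$. Invoking the recorded fact that $2v_p(L)=v_p(N)$ for each prime $p\mid L$ gives ${u^\prime}^2||N$ (so $w_{{u^\prime}^2,N}$ is defined) and $(L,M)=1$, hence $(u^{\prime\prime},M)=1$; combined with $M\mid c$ (from $g\in\Gamma_0(M)$) and $u^{\prime\prime}\mid c$, this yields $Mu^{\prime\prime}\mid c$. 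Setting $x:=a/u^\prime$, $y:=b/u^{\prime\prime}$, $z:=c/(Mu^{\prime\prime})$, $w:=d/u^\prime$, all integers, one reads off directly from the definition of $\Gamma_{u^\prime}$ that $g=\psmat{u^\prime x}{u^{\prime\prime}y}{Mu^{\prime\prime}z}{u^\prime w}\in\Gamma_{u^\prime}$. The coset identity $\Upsilon\, w_{{u^\prime}^2,N}\,\Gamma_0(N)\,\Upsilon^{-1}=\Gamma_{u^\prime}$ is then a direct $2\times2$ matrix computation, of exactly the kind already used when introducing the groups $\Gamma_{u^\prime}$. I expect this valuation bookkeeping (and verifying the coset identity for our $u^\prime$) to be the only genuinely computational part, and the main obstacle only in the sense of care rather than depth. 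Note that $g\in S_{(u_1,\ldots,u_k),M}^\prime$ means $g\notin\widetilde{G}$, which automatically excludes $u^\prime=1$; but the conclusion does not need this.

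For step (ii): $w_{{u^\prime}^2,N}\in\mathcal{N}(\Gamma_0(N))$ by Atkin--Lehner theory (cf. \cite{AtLe}, \cite{Conway} and Theorem \ref{Conway}), and since $4,9\nmid N$ the quotient $\mathcal{N}(\Gamma_0(N))/\Gamma_0(N)=B(N)$ is abelian, so its subgroup $G/\Gamma_0(N)$ is normal in it; hence conjugation by $w_{{u^\prime}^2,N}$ preserves $G$, i.e.\ $w_{{u^\prime}^2,N}\,G\,w_{{u^\prime}^2,N}^{-1}=G$.

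For step (iii): using $\Upsilon\Gamma_0(N)\Upsilon^{-1}=\Gamma_{(u_1,\ldots,u_k)}$, write an arbitrary $\delta\in\Gamma_{u^\prime}=\Upsilon\, w_{{u^\prime}^2,N}\,\Gamma_0(N)\,\Upsilon^{-1}$ as $\delta=\Upsilon\, w_{{u^\prime}^2,N}\,\gamma_0\,\Upsilon^{-1}$ with $\gamma_0\in\Gamma_0(N)\subseteq G$, and compute
\[
\delta\,\widetilde{G}\,\delta^{-1}
=\Upsilon\, w_{{u^\prime}^2,N}\,\gamma_0\,\bigl(\Upsilon^{-1}\widetilde{G}\,\Upsilon\bigr)\,\gamma_0^{-1}\,w_{{u^\prime}^2,N}^{-1}\,\Upsilon^{-1}
=\Upsilon\, w_{{u^\prime}^2,N}\,\gamma_0\,G\,\gamma_0^{-1}\,w_{{u^\prime}^2,N}^{-1}\,\Upsilon^{-1}
=\Upsilon G\Upsilon^{-1}=\widetilde{G},
\]
using $\Upsilon^{-1}\widetilde{G}\Upsilon=G$, then $\gamma_0\in G$, then step (ii). Hence $\Gamma_{u^\prime}\subseteq\mathcal{N}(\widetilde{G})$, and since $g\in\Gamma_{u^\prime}$ by step (i), $g\in\mathcal{N}(\widetilde{G})=\mathcal{N}(\langle \Gamma_{(u_1,\ldots, u_k)}, \Gamma_{u_1}, \ldots, \Gamma_{u_k}\rangle)$, which is the assertion. (Equivalently, one may verify $\Upsilon^{-1}g\Upsilon\in w_{{u^\prime}^2,N}\,\Gamma_0(N)$ by a direct matrix computation and run the same conjugation with $\Upsilon^{-1}g\Upsilon$ playing the role of $w_{{u^\prime}^2,N}\gamma_0$ above.)
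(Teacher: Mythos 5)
Your proposal is correct and follows essentially the same route as the paper: identify $u^\prime=(a,\mathrm{lcm}(u_1,\ldots,u_k))$, check via the determinant/coprimality and the congruences $ac\equiv bd\equiv 0\pmod{\mathrm{lcm}(u_1,\ldots,u_k)}$ that $\Upsilon^{-1}g\Upsilon\in w_{{u^\prime}^2,N}\Gamma_0(N)$ (equivalently $g\in\Gamma_{u^\prime}$), and conclude by conjugating the fact that $w_{{u^\prime}^2,N}$ normalizes $\langle\Gamma_0(N),w_{u_1^2},\ldots,w_{u_k^2}\rangle$. You merely spell out the valuation bookkeeping and the normalization step that the paper labels ``easy to check''.
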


For $e||M$, we fix $x_{(e,M,k)},y_{(e,M,k)}\in \Z$ such that $ey_{(e,M,k)}-\frac{M}{e} \mathrm{lcm}(u_1,\ldots,u_k)^2x_{(e,M,k)}=1$ i.e., $$\delta_{(e,M,k)}:=\frac{1}{\sqrt{e}}\psmat{e}{\mathrm{lcm}(u_1,\ldots, u_k)\cdot x_{(e,M,k)}}{M\cdot \mathrm{lcm}(u_1,\ldots, u_k)}{e\cdot y_{(e,M,k)}}\in w_{e,M}\Gamma_0(M).$$ Note that the set $\{\mathrm{id},\delta_{(e,M,k)}: e||M\}$ forms a complete set of representatives for the left cosets of $\Gamma_0(M)$ in $\Gamma_0^*(M)$. If the set $S_{(u_1,\ldots,u_k)}^M:=\{g_i : 1\leq i \leq [\Gamma_0(M):\langle \Gamma_{(u_1,\ldots, u_k)}, \Gamma_{u_1}, \ldots, \Gamma_{u_k}\rangle]\}$ forms a complete set of representatives for the left cosets of $\langle \Gamma_{(u_1,\ldots, u_k)}, \Gamma_{u_1}, \ldots, \Gamma_{u_k}\rangle$ in $\Gamma_0(M)$, then the set 
$$S_{(u_1,\ldots,u_k)}^{M,+}:= \{\delta_{(e,M,k)}^jg_i: 0\leq j \leq 1, e||M,1\leq i \leq [\Gamma_0(M):\langle \Gamma_{(u_1,\ldots, u_k)}, \Gamma_{u_1}, \ldots, \Gamma_{u_k}\rangle]\}$$ forms a complete set of representatives for the left cosets of $\langle \Gamma_{(u_1,\ldots, u_k)}, \Gamma_{u_1}, \ldots, \Gamma_{u_k}\rangle$ in $\Gamma_0^*(M)$.

Since $\Upsilon_{\mathrm{lcm}(u_1,\ldots, u_k)}^{-1}\delta_{(e,M,k)} \Upsilon_{\mathrm{lcm}(u_1,\ldots, u_k)}\in w_{e,N}\Gamma_0(N)$ (with $e||M$) and $w_{e,N}\in \mathcal{N}(\langle\Gamma_0(N), w_{u_1^2}, \ldots, w_{u_k^2}\rangle),$ we have $\delta_{(e,M,k)}\in \mathcal{N}(\langle \Gamma_{(u_1,\ldots, u_k)}, \Gamma_{u_1}, \ldots, \Gamma_{u_k}\rangle)$. Therefore by { Lemma \ref{Main theorem for computing normaliser General case new version}} it suffices to compute the $g_i$'s such that { $g_i\in \mathcal{N}(\langle \Gamma_{(u_1,\ldots, u_k)}, \Gamma_{u_1}, \ldots, \Gamma_{u_k}\rangle)$}, i.e., we need to compute the set $\mathcal{N}(\langle \Gamma_{(u_1,\ldots, u_k)}, \Gamma_{u_1}, \ldots, \Gamma_{u_k}\rangle)\cap \Gamma_0(M)$. 

\begin{prop}\label{ac and bd divisible by lcm/5 Theorem}
Let $N, u_1,u_2\ldots, u_k\in \N$ such that $4,9\nmid N$ and $u_i^2||N$ for $i\in \{1,2,\ldots,k\}$. 
If $$\psmat{a}{b}{c}{d}\in \mathcal{N}(\langle \Gamma_{(u_1,\ldots, u_k)}, \Gamma_{u_1}, \ldots, \Gamma_{u_k}\rangle)\cap \Gamma_0(M), \  \mathrm{then} \ 
ac\equiv bd \equiv 0 \pmod {\frac{\mathrm{lcm}(u_1,\ldots, u_k)}{5^{v_5(\mathrm{lcm}(u_1,\ldots, u_k))}}}.$$
\end{prop}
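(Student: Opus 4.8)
The plan is to argue one prime at a time, reducing everything modulo a prime power. Put $L:=\mathrm{lcm}(u_1,\ldots,u_k)$, $M:=N/L^2$ and $G:=\langle \Gamma_{(u_1,\ldots,u_k)},\Gamma_{u_1},\ldots,\Gamma_{u_k}\rangle$, and let $\sigma:=\psmat{a}{b}{c}{d}\in\mathcal{N}(G)\cap\Gamma_0(M)$. Since $L/5^{v_5(L)}=\prod_{p\mid L,\,p\neq 5}p^{v_p(L)}$, it is enough to fix a prime $p\mid L$ with $p\neq 5$ and to show $p^{\pi}\mid ac$ and $p^{\pi}\mid bd$, where $\pi:=v_p(L)\geq 1$. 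Because $v_p(N)=2v_p(L)=2\pi$ and $4,9\nmid N$, one necessarily has $p\geq 7$, and $p\nmid M$.

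First I would analyse the reduction of $G$ in $\SL_2(\Z/p^{\pi})$. Every element of $G$ lies in some coset $\Gamma_{u'}$ with $u'||L$, and by the facts recorded just before the statement $v_p(u')\in\{0,\pi\}$. Reading off the defining shape of $\Gamma_{u'}$: when $v_p(u')=0$ its off-diagonal entries are divisible by $L/u'$, hence by $p^{\pi}$, so its reduction is diagonal; when $v_p(u')=\pi$ its diagonal entries are divisible by $u'$, hence by $p^{\pi}$, so its reduction is anti-diagonal. Hence the image $\bar G$ of $G$ lies in the group $N(T)$ of monomial matrices of $\SL_2(\Z/p^{\pi})$, $T$ being the diagonal torus. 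Moreover $\bar G$ contains a generator of $T$: using $\Gamma_{(u_1,\ldots,u_k)}=\Upsilon_L\Gamma_0(N)\Upsilon_L^{-1}$ with $\Upsilon_L=\psmat{1}{0}{0}{1/L}$, I would pick $\alpha\in\Z$ coprime to $N$ which is a primitive root modulo $p^{\pi}$, take $\beta,\delta$ with $\alpha\delta-N\beta=1$ so that $\psmat{\alpha}{\beta}{N}{\delta}\in\Gamma_0(N)$, and note that its $\Upsilon_L$-conjugate lies in $\Gamma_{(u_1,\ldots,u_k)}$ and (since $p^{2\pi}\mid N$ forces $\alpha\delta\equiv 1\pmod{p^{\pi}}$) reduces modulo $p^{\pi}$ to $\mathrm{diag}(\alpha,\alpha^{-1})$, a generator of $T$. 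One in fact gets $\bar G=N(T)$, but only $T\subseteq\bar G\subseteq N(T)$ will be used.

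Next I would use that $\sigma$ normalizes $G$. As $-I\in G$ and $\sigma\in\SL_2(\Z)$, the relation $\sigma G\sigma^{-1}=G$ already holds in $\SL_2(\Z)$, so $\bar\sigma$ normalizes $\bar G$ in $\SL_2(\Z/p^{\pi})$. Take a generator $t_0\in\bar G\cap T$ of $T$; it has order $p^{\pi-1}(p-1)\geq 6$, hence so does $\bar\sigma t_0\bar\sigma^{-1}\in\bar G\subseteq N(T)$. But every monomial matrix outside $T$ is anti-diagonal, of the form $\psmat{0}{s}{-s^{-1}}{0}$, which squares to $-I$ and so has order $4$; therefore $\bar\sigma t_0\bar\sigma^{-1}$ lies in $T$, and since $t_0$ generates the cyclic group $T$ this gives $\bar\sigma T\bar\sigma^{-1}=T$. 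Finally, $T$ contains $\mathrm{diag}(\alpha,\alpha^{-1})$ whose two eigenvalues differ by the unit $\alpha-\alpha^{-1}$ of $\Z/p^{\pi}$ (a unit because $\alpha^{2}\not\equiv 1\pmod p$ when $p\geq 5$), so its eigenspaces are exactly the two coordinate lines; a matrix normalizing $T$ must permute these lines and is therefore monomial. Thus $\bar\sigma$ is monomial, i.e. modulo $p^{\pi}$ either $b\equiv c\equiv 0$ or $a\equiv d\equiv 0$, and in either case $p^{\pi}\mid ac$ and $p^{\pi}\mid bd$.

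Running this for every prime $p\mid L$ with $p\neq 5$ then yields $ac\equiv bd\equiv 0\pmod{L/5^{v_5(L)}}$. The step I expect to be the main obstacle is the first one: checking carefully that every coset $\Gamma_{u'}$ occurring in $G$ reduces to a monomial matrix modulo $p^{\pi}$ — this is exactly where $v_p(u')\in\{0,\pi\}$, hence the hypotheses $u_i^{2}||N$ and $4,9\nmid N$, enters — and that $\bar G$ still contains a generator of $T$. The finite-group input is elementary, and it is precisely here that the prime $5$ must be excluded: for $p=5$ and $\pi=1$ the torus $T$ has order $p^{\pi-1}(p-1)=4$, $N(T)$ is the quaternion group $Q_8$, and its order-$4$ elements are no longer confined to $T$, which is the origin of the extra order-$3$ modular automorphisms discussed earlier in the paper.
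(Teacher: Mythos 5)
Your proposal is correct, and it takes a genuinely different route from the paper. The paper argues with three explicit elements $\gamma_{u_1,l}\in\langle \Gamma_{(u_1,\ldots,u_k)},\Gamma_{u_1}\rangle$, $l\in\{1,2,3\}$, conjugates them by $\sigma$, and uses the coset decomposition of the group together with a pigeonhole case analysis to extract entrywise congruences, the prime $5$ being excluded because $5\mid l_1^2-l_2^2$ can occur while $p\nmid l_1^2-l_2^2$ for $p\geq 7$. You instead reduce the whole group modulo $p^{\pi}$ and run a torus-normalizer argument in $\SL_2(\Z/p^{\pi})$: the generators reduce into the monomial group (diagonal for $\Gamma_{(u_1,\ldots,u_k)}$ and for $\Gamma_{u'}$ with $v_p(u')=0$, anti-diagonal for $v_p(u')=\pi$, exactly because $u'\,\|\,L$ and $u_i^2\,\|\,N$), the image contains a generator $\mathrm{diag}(\alpha,\alpha^{-1})$ of $T$ coming from $\Upsilon_L\Gamma_0(N)\Upsilon_L^{-1}$, and the order comparison ($p^{\pi-1}(p-1)\geq 6$ versus order $4$ for anti-diagonal elements) forces $\bar\sigma$ to normalize $T$ and hence to be monomial. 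This yields the stronger conclusion that, for each $p\mid L$ with $p\neq 5$, either $a\equiv d\equiv 0$ or $b\equiv c\equiv 0 \pmod{p^{\pi}}$, from which the stated $ac\equiv bd\equiv 0$ follows by CRT; and your remark that for $p=5$, $\pi=1$ the monomial group is $Q_8$, whose order-$4$ elements are not confined to $T$, is the exact structural counterpart of the paper's $5\mid l_1^2-l_2^2$ obstruction and of the exceptional order-$3$ automorphisms found later. Two small points to tighten in a final write-up: the passage from "$\bar\sigma$ normalizes $T$" to "$\bar\sigma$ is monomial" should be done entrywise over $\Z/p^{\pi}$ (which is not a field), writing $\bar\sigma\,\mathrm{diag}(\alpha,\alpha^{-1})=\mathrm{diag}(t,t^{-1})\bar\sigma$ and using that $\alpha\not\equiv\alpha^{-1}\pmod p$ makes the relevant differences units, so that $\det=1$ rules out the mixed case; and the containment $\bar G\subseteq N(T)$ can be obtained directly from the generators (monomial matrices form a group), so the coset fact is only needed to know $v_p(u')\in\{0,\pi\}$, which is automatic from $u'\,\|\,L$. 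Both approaches rest on the same structural inputs, but yours trades the paper's explicit congruence manipulations for a short finite-group argument and gives a finer description of the normalizing matrices modulo $p^{\pi}$.
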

\begin{proof}
{ Let $\sigma:=\psmat{a}{b}{c}{d} \in \mathcal{N}(\langle \Gamma_{(u_1,\ldots, u_k)}, \Gamma_{u_1}, \ldots, \Gamma_{u_k}\rangle)\cap \Gamma_0(M)$
%
and $p_1>5$ be a prime such that $p_1|\mathrm{lcm}(u_1,u_2,\ldots, u_k)$.} Without loss of generality we assume that $v_{p_1}(u_1)=v_{p_1}(\mathrm{lcm}(u_1,u_2,\ldots, u_k))$, and write $n_1:=v_{p_1}(u_1)$.

For $l\in \{1,2,3\}$, there exist $r_{u_1,l}, k_{u_1,l}\in \Z$ such that
\begin{equation}\label{lcm Atkin Lehner u1 1}
u_1^2k_{u_1,l}+lM\frac{\mathrm{lcm}(u_1,\ldots, u_k)^2}{u_1^2}r_{u_1,l}=1,
\end{equation}
i.e., $\gamma_{u_1,l}:=\psmat{u_1}{-l\frac{\mathrm{lcm}(u_1,\ldots, u_k)}{u_1}}{M\frac{\mathrm{lcm}(u_1,\ldots, u_k)}{u_1}r_{u_1,l}}{u_1k_{u_1,l}}\in \langle \Gamma_{(u_1,\ldots, u_k)}, \Gamma_{u_1}\rangle \subseteq \langle \Gamma_{(u_1,\ldots, u_k)}, \Gamma_{u_1}, \ldots, \Gamma_{u_k}\rangle.$  Consequently, $\sigma \gamma_{u_1,l}\sigma^{-1}\in \langle \Gamma_{(u_1,\ldots, u_k)}, \Gamma_{u_1}, \ldots, \Gamma_{u_k}\rangle$. In particular, $\sigma \gamma_{u_1,l}\sigma^{-1}\in \langle \Gamma_{(u_1,\ldots, u_k)}, \Gamma_{v_l}\rangle$ for some $v_l||\mathrm{lcm}(u_1,u_2, \ldots, u_k)$.

Suppose there exist $l_1,l_2\in \{1,2,3\}$ (with $l_1\neq l_2$) such that $\sigma \gamma_{u_1,l_1}\sigma^{-1}\in \langle \Gamma_{(u_1,\ldots, u_k)}, \Gamma_{u_1^\prime}\rangle$ and $\sigma \gamma_{u_1,l_2}\sigma^{-1}\in \langle \Gamma_{(u_1,\ldots, u_k)}, \Gamma_{u_2^{\prime}}\rangle$ for some $u_1^\prime, u_2^{\prime}$ with $p_1\nmid u_1^\prime u_2^\prime$.
{Then from the constructions of $\Gamma_{(u_1,\ldots, u_k)}, \Gamma_{u_i^{\prime}}$ we have (see the discussions before Lemma \ref{Main theorem for computing normaliser General case new version})}
\begin{equation}\label{conjugate in u1prime 1st case}
\sigma \gamma_{u_1,l_i}\sigma^{-1}[0,1]\equiv \sigma \gamma_{u_1,l_i}\sigma^{-1}[1,0] \equiv 0 \pmod {\frac{\mathrm{lcm}(u_1,\ldots, u_k)}{u_i^\prime}}, \  \mathrm{for} \ i \in \{1,2\}.
\end{equation}
Since $p_1\nmid u_1^\prime u_2^\prime$, we have $p_1^{n_1}|\frac{\mathrm{lcm}(u_1,\ldots, u_k)}{u_1^\prime}$ and $p_1^{n_1}|\frac{\mathrm{lcm}(u_1,\ldots, u_k)}{u_2^\prime}$. From \eqref{conjugate in u1prime 1st case} 
we have
\begin{equation}\label{conjugate in u1prime 1st case 2nd equation}
\sigma \gamma_{u_1,l_i}\sigma^{-1}[0,1]\equiv \sigma \gamma_{u_1,l_i}\sigma^{-1}[1,0]  \equiv 0\pmod {p_1^{n_1}}, \ \mathrm{for} \ i \in \{1,2\}.
\end{equation}
Combining \eqref{lcm Atkin Lehner u1 1} with \eqref{conjugate in u1prime 1st case 2nd equation},
 we get
\begin{equation}\label{eq3.5}
a^2\frac{\mathrm{lcm}(u_1,\ldots, u_k)^2}{u_1^2}l_i^2 + b^2\equiv c^2\frac{\mathrm{lcm}(u_1,\ldots, u_k)^2}{u_1^2}l_i^2 + d^2 \equiv 0 \pmod {p_1^{n_1}}, \ \mathrm{for} \ i \in \{1,2\}.
\end{equation}
From \eqref{eq3.5} we obtain
\begin{equation}\label{conjugate in u1prime, u2prime final equation 1}
a^2\frac{\mathrm{lcm}(u_1,\ldots, u_k)^2}{u_1^2}(l_1^2-l_2^2) \equiv c^2\frac{\mathrm{lcm}(u_1,\ldots, u_k)^2}{u_1^2}(l_1^2-l_2^2) \equiv 0 \pmod {p_1^{n_1}}.
\end{equation}
Recall that $p\nmid (l_1^2-l_2^2)$ for any prime $p>5$ and $u_1||\mathrm{lcm}\{u_1,\ldots,u_k\}$.
Since $p_1^{n_1}>5$ and $p_1\nmid \frac{\mathrm{lcm}(u_1,\ldots, u_k)^2}{u_1^2}$, \eqref{conjugate in u1prime, u2prime final equation 1} implies that $(a,c)>1$. Which contradicts that $\psmat{a}{b}{c}{d}\in \Gamma_0(M)$. 
{Therefore, for any two distinct elements $i_1,i_2\in \{1,2,3\}$ we must have $\{\sigma \gamma_{u_1,i_1}\sigma^{-1}, \sigma \gamma_{u_1,i_2}\sigma^{-1}\}\not\subset \Gamma_{(u_1,\ldots, u_k)}$, $\sigma \gamma_{u_1,i_1}\sigma^{-1}\in \langle \Gamma_{(u_1,\ldots, u_k)}, \Gamma_{v_1^\prime}\rangle$ and $\sigma \gamma_{u_1,i_2}\sigma^{-1}\in \langle \Gamma_{(u_1,\ldots, u_k)}, \Gamma_{v_2^{\prime}}\rangle$ for some $v_1^\prime, v_2^{\prime}$ with $p_1| v_1^\prime v_2^\prime$ i.e., either $p|v_1$ or $p|v_2$. 
Hence there exist $l_1,l_2\in \{1,2,3\}$ (with $l_1\neq l_2$) such that $\sigma \gamma_{u_1,l_1}\sigma^{-1}\in \Gamma_{u_3^\prime}$ and $\sigma \gamma_{u_1,l_2}\sigma^{-1}\in  \Gamma_{u_4^{\prime}}$ for some $u_3^\prime, u_4^{\prime}$ with $p_1| (u_3^\prime, u_4^\prime)$ (for example suppose that $i_1\in \{1,2,3\}$ such that $\sigma \gamma_{u_1,i_1}\sigma^{-1}\in \langle \Gamma_{(u_1,\ldots, u_k)}, \Gamma_{v_1^\prime}\rangle$ with $p\nmid v_1^\prime$, then for the two remaining elements $i_2,i_3\in \{1,2,3\}\char`\\ \{i_1\}$ we must have $\sigma \gamma_{u_1,i_2}\sigma^{-1}\in \Gamma_{v_2^\prime}$ and $\sigma \gamma_{u_1,i_3}\sigma^{-1}\in  \Gamma_{v_3^{\prime}}$ for some $v_2^\prime, v_3^{\prime}$ with $p_1| v_2^\prime$ and $p|v_3^\prime$).}
 Therefore we have
\begin{equation}\label{conjugate in u3prime 1}
\sigma \gamma_{u_1,l_i}\sigma^{-1}[0,0]\equiv \sigma \gamma_{u_1,l_i}\sigma^{-1}[1,1] \equiv 0 \pmod {u_{2+i}^\prime}, \ \mathrm{for} \ i\in \{1,2\},
\end{equation}
\begin{equation}\label{conjugate in u3prime inverse 1}
\sigma \gamma_{u_1,l_i}\sigma^{-1}[0,1]\equiv \sigma \gamma_{u_1,l_i}\sigma^{-1}[1,0] \equiv 0 \pmod {\frac{\mathrm{lcm}(u_1,\ldots, u_k)}{u_{2+i}^\prime}}, \ \mathrm{for} \ i\in \{1,2\}.
\end{equation}
Recall that from \eqref{lcm Atkin Lehner u1 1} we have
\begin{equation}
l_iM\frac{\mathrm{lcm}(u_1,\ldots, u_k)^2}{u_1^2}r_{u_1,l_i}\equiv 1 \pmod {p_1^{n_1}} \ \mathrm{for } \ i\in \{1,2\}.
\end{equation}
Using this congruence, from \eqref{conjugate in u3prime 1} we have
\begin{equation}\label{ac+bd with l1 and l2 mod eta1}
ac\frac{\mathrm{lcm}(u_1,\ldots, u_k)^2}{u_1^2}l_1^2 + bd\equiv ac\frac{\mathrm{lcm}(u_1,\ldots, u_k)^2}{u_1^2}l_2^2 + bd\equiv 0\pmod {p_1^{n_1}}.
\end{equation}
Thus we obtain
\begin{equation}
ac\frac{\mathrm{lcm}(u_1,\ldots, u_k)^2}{u_1^2}(l_1^2-l_2^2) \equiv 0 \pmod {p_1^{n_1}},
\end{equation}
equivalently we get
\begin{equation}\label{ac l1-l2 equiv 0 mod eta1 1}
ac(l_1^2-l_2^2) \equiv 0 \pmod {p_1^{n_1}}. 
\end{equation}
Since $(p_1^{n_1}, |l_1^2-l_2^2|)=1$, \eqref{ac l1-l2 equiv 0 mod eta1 1} implies that $ac \equiv 0 \pmod {p_1^{n_1}}$. Since $p_1$ is arbitrary, we conclude that $ac\equiv bd \equiv 0 \pmod {p^{v_p(\mathrm{lcm}(u_1,u_2,\ldots, u_k))}}$ for every prime $p>5$. The result follows.
\end{proof}
In order to compute the set $\mathcal{N}(\langle \Gamma_{(u_1,\ldots, u_k)}, \Gamma_{u_1}, \ldots, \Gamma_{u_k})\rangle\cap \Gamma_0(M)$ explicitly,  first we consider the case $w_{25}\notin \langle \Gamma_0(N), w_{u_1^2}, w_{u_2^2}, \ldots, w_{u_k^2}\rangle$ and then we consider the case $w_{25}\in \langle \Gamma_0(N), w_{u_1^2}, w_{u_2^2}, \ldots, w_{u_k^2}\rangle$.
\subsection{Exact normalizer of $\langle \Gamma_0(N), w_{u_1^2}, w_{u_2^2}, \ldots, w_{u_k^2}\rangle$ with $w_{25}\notin \langle \Gamma_0(N), w_{u_1^2}, w_{u_2^2}, \ldots, w_{u_k^2}\rangle$}
The following result will be very useful for computing the normalizer when $5|u_i$.
\begin{lema}\label{5 adic valuation of abcd proposition}
Let $N, u_1,u_2\ldots, u_k\in \N$ such that $4,9\nmid N, u_i^2||N$ and $w_{5^2}\notin \langle\Gamma_0(N), w_{u_1^2}, \ldots, w_{u_k^2}\rangle$. If $$\psmat{a}{b}{c}{d}\in \mathcal{N}(\langle \Gamma_{(u_1,\ldots, u_k)}, \Gamma_{u_1}, \ldots, \Gamma_{u_k}\rangle)\cap \Gamma_0(M), \ then \ v_5(abcd)\geq 2v_5(\mathrm{lcm}(u_1,u_2,\ldots,u_k)).$$
\end{lema}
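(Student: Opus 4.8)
The plan is to mimic the structure of the proof of Proposition~\ref{ac and bd divisible by lcm/5 Theorem}, but now working at the prime $5$, where the arithmetic of $(\Z/5^n\Z)^\times$ is more delicate because it contains few elements of small order. Let $\sigma:=\psmat{a}{b}{c}{d}\in \mathcal{N}(\langle \Gamma_{(u_1,\ldots, u_k)}, \Gamma_{u_1}, \ldots, \Gamma_{u_k}\rangle)\cap \Gamma_0(M)$, and write $n_5:=v_5(\mathrm{lcm}(u_1,\ldots,u_k))$; we may assume $n_5>0$, otherwise there is nothing to prove. Without loss of generality assume $v_5(u_1)=n_5$. As in the previous proposition, for suitable parameters $l$ we form matrices $\gamma_{u_1,l}:=\psmat{u_1}{-l\frac{\mathrm{lcm}(u_1,\ldots, u_k)}{u_1}}{M\frac{\mathrm{lcm}(u_1,\ldots, u_k)}{u_1}r_{u_1,l}}{u_1k_{u_1,l}}\in \langle \Gamma_{(u_1,\ldots, u_k)}, \Gamma_{u_1}\rangle$, so that $\sigma\gamma_{u_1,l}\sigma^{-1}\in \langle \Gamma_{(u_1,\ldots, u_k)}, \Gamma_{v}\rangle$ for some $v\mid\mid \mathrm{lcm}(u_1,\ldots,u_k)$; the key extra input is that, by hypothesis $w_{5^2}\notin \langle \Gamma_0(N),w_{u_1^2},\ldots,w_{u_k^2}\rangle$, the exponent $v_5(v)$ is forced to be either $0$ or $n_5$ and it cannot be the case that $\sigma\gamma_{u_1,l}\sigma^{-1}$ lies in $\Gamma_{(u_1,\ldots,u_k)}$ with the ``$5$-part'' exactly cancelled at level $5$ alone.

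**The main computation.** Reducing the relevant congruences modulo $5^{n_5}$ — exactly as in equations~\eqref{conjugate in u3prime 1}--\eqref{ac l1-l2 equiv 0 mod eta1 1} of the previous proof — produces, for two choices $l_1\neq l_2$, relations of the shape $ac\cdot\frac{\mathrm{lcm}(u_1,\ldots,u_k)^2}{u_1^2}\cdot l_i^2 + bd \equiv 0 \pmod{5^{n_5}}$ and the ``$[0,0],[1,1]$'' companion relations $a^2\cdot(\cdots)\cdot l_i^2+b^2\equiv c^2\cdot(\cdots)\cdot l_i^2+d^2\equiv 0\pmod{5^{n_5}}$. Because we can no longer freely invoke ``$5\nmid l_1^2-l_2^2$'', I would instead extract from these two congruences the relation $ac(l_1^2-l_2^2)\equiv 0$ and $(ad^2,bc^2,\dots)$-type relations modulo $5^{n_5}$, and then argue $5$-adically: since $\sigma\in\Gamma_0(M)$ has $(a,c)$ and $(b,d)$ controlled (indeed $ad-bc=1$ or a unit forces $a,d$ coprime to things $b,c$ are divisible by), one gets lower bounds $v_5(a)+v_5(c)\geq n_5$ and $v_5(b)+v_5(d)\geq n_5$ from Proposition~\ref{ac and bd divisible by lcm/5 Theorem} — wait, that proposition only handles primes $>5$, so here I must redo it at $5$ using the constraint on the available $l$'s. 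The point is that with $5$ distinct residues $l\in\{1,2,3,4\}$ (or $\{0,1,2,3,4\}$) available, at least three must land in the same ``$\Gamma_v$-class'', and comparing any two of those three gives $ac(l_1^2-l_2^2)\equiv 0\pmod{5^{n_5}}$ with $5\mid l_1^2-l_2^2$ possibly occurring only once, so a pigeonhole over the four nonzero residues guarantees a usable pair with $5\nmid l_1^2-l_2^2$ (since $1^2,2^2,3^2,4^2\equiv 1,4,4,1\pmod 5$ only collide in two pairs). From $ac\equiv 0\pmod{5^{n_5}}$ and its $bd$-analogue, together with $ad-bc$ a unit mod $5$, one then upgrades to $v_5(abcd)\geq 2n_5$: indeed one of $a,c$ carries all of $5^{n_5}$ and likewise for $b,d$, and the unit-determinant condition forces the two ``heavy'' entries not to be $\{a,d\}$ or $\{b,c\}$ simultaneously, yielding the product bound.

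**Main obstacle.** The hard part will be the pigeonhole/case analysis over the residues $l$: unlike the $p>5$ case where $l_1^2-l_2^2$ is automatically a unit, modulo $5$ we have the coincidence $1^2\equiv 4^2$ and $2^2\equiv 3^2$, so a naive choice of two $l$'s may give a vacuous congruence. I would handle this by using \emph{three} values of $l$ whose conjugates land in the same $\Gamma_v$-class (guaranteed since there are at most two possible classes for the $5$-part, namely $v_5=0$ or $v_5=n_5$, while four or five values of $l$ are available), and among any three residues in $\{1,2,3,4\}$ at least two lie in different squares-mod-$5$ classes, giving a pair with $5\nmid l_1^2-l_2^2$. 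A secondary technical nuisance is bookkeeping the possibility $5\mid M$ versus $5\nmid M$ and ensuring the matrices $\gamma_{u_1,l}$ genuinely lie in $\langle\Gamma_{(u_1,\ldots,u_k)},\Gamma_{u_1}\rangle$ for the chosen $l$; this is routine but must be checked. Once $ac$ and $bd$ are each divisible by $5^{n_5}$, deducing $v_5(abcd)\geq 2n_5$ is immediate from $\det\sigma\in\{\pm1\}$ modulo powers of $5$.
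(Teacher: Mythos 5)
There is a genuine gap, and it sits exactly at the step you flag as the "main obstacle": the pigeonhole does not go through. Since $5\mid u_1$, the parameter $l$ must be prime to $u_1$ (otherwise $u_1^2k_{u_1,l}+lM\frac{\mathrm{lcm}(u_1,\ldots,u_k)^2}{u_1^2}r_{u_1,l}=1$ is unsolvable), so only the four residues $l\in\{1,2,3,4\}$ modulo $5$ are available, not five. Each conjugate $\sigma\gamma_{u_1,l}\sigma^{-1}$ falls into one of two cases (diagonal entries divisible by $5^{n_0}$, or off-diagonal entries divisible by $5^{n_0}$, where $n_0:=v_5(\mathrm{lcm}(u_1,\ldots,u_k))$), and with four values and two cases the worst configuration is a $2$--$2$ split along precisely the square-collision pairs $\{1,4\}$ and $\{2,3\}$: within each case the only available pair has $l_1^2\equiv l_2^2\pmod 5$, so your subtracted congruences only yield divisibility by $5^{n_0-1}$, which is vacuous in the basic case $n_0=1$ (e.g.\ $25\,\|\,N$). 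Your proposal never rules this configuration out, so the claimed conclusion $ac\equiv bd\equiv 0\pmod{5^{n_0}}$ is not established. (For $n_0\geq 2$ one can patch things: a pair in the "off-diagonal" case with $l_1\neq l_2$ forces $v_5(a^2),v_5(c^2)\geq n_0-1\geq 1$, contradicting $(a,c)=1$, so at most one $l$ lands there and three land in the other case, among which a pair with $l_1^2\not\equiv l_2^2\pmod 5$ exists; but for $n_0=1$ this fails, and that is the substantive case.) Note also that the intermediate statement you aim for, $ac\equiv bd\equiv 0\pmod{5^{n_0}}$, is exactly Proposition \ref{Normalizer with 25 not in quotient} of the paper, whose proof \emph{uses} the present lemma to dispose of these degenerate subcases -- a sign that proving it directly is harder than it looks.

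The paper's own proof avoids the $l$-family entirely and uses the hypothesis $w_{5^2}\notin\langle\Gamma_0(N),w_{u_1^2},\ldots,w_{u_k^2}\rangle$ in a different way: it picks a prime $p\equiv 2\pmod{\eta}$ (with $\eta=\mathrm{lcm}(u_1,\ldots,u_k)$), conjugates the single element $\psmat{p}{\eta}{\eta Mr}{k^\prime}\in\Gamma_{(u_1,\ldots,u_k)}$, and rules out that the conjugate $E$ lies in any $\Gamma_u$ by a trace computation: $E[0,0]+E[1,1]\equiv p+k^\prime\equiv 0\pmod u$ together with $2k^\prime\equiv 1\pmod u$ would force $4\equiv -1\pmod u$, impossible because every admissible $u$ satisfies $u>5$ -- this is where the absence of $w_{25}$ enters. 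Hence $E\in\Gamma_{(u_1,\ldots,u_k)}$, and its off-diagonal entries, which are $\equiv ab(k^\prime-p)$ and $cd(p-k^\prime)$ modulo $\eta$, are divisible by $\eta$; since $v_5(p-k^\prime)=0$ this gives $v_5(abcd)\geq 2n_0$ in one stroke, with no case analysis over residues. If you want to salvage your approach you would need an additional argument excluding the $2$--$2$ split when $n_0=1$ (for instance exploiting products $\gamma_{u_1,l_1}\gamma_{u_1,l_2}^{-1}$ or an auxiliary prime divisor of $u_1$ as in Proposition \ref{Normalizer with 25 not in quotient}), which your sketch does not supply.
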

\begin{proof}
For simplicity of notations, we write $\eta:=\mathrm{lcm}(u_1,u_2,\ldots,u_k)$ and $n_0:=v_5(\eta)$. If $n_0=0$, then the proposition is obvious, so we assume that $n_0>0$.
Since $w_{5^2}\notin \langle\Gamma_0(N), w_{u_1^2}, \ldots, w_{u_k^2}\rangle$, the set $\Gamma_5\cap \langle \Gamma_{(u_1,\ldots, u_k)}, \Gamma_{u_1}, \ldots, \Gamma_{u_k}\rangle$ is empty.
 Let $\sigma:=\psmat{a}{b}{c}{d}\in  \mathcal{N}(\langle \Gamma_{(u_1,\ldots, u_k)}, \Gamma_{u_1}, \ldots, \Gamma_{u_k}\rangle)\cap \Gamma_0(M)$. 
Since $(2,\eta)=1$, there exists a prime $p$ such that $p\equiv 2 \pmod \eta$ and $p\nmid N$. Since $u>5$ for any $\Gamma_u\subseteq \langle \Gamma_{(u_1,\ldots, u_k)}, \Gamma_{u_1}, \ldots, \Gamma_{u_k}\rangle \char`\\ \Gamma_{(u_1,\ldots, u_k)}$, we have $p^2\equiv 4\not\equiv \pm 1 \pmod u$ (because $u|\eta$ and $p^2\equiv 4\not\equiv \pm 1 \pmod \eta$). Moreover, there exist $k^\prime,r\in \Z$ such that $pk^\prime-\eta^2Mr=1$ i.e., $\psmat{p}{\eta}{\eta Mr}{k^\prime}\in \Gamma_{(u_1,\ldots, u_k)}$. Consequently, we have $pk^\prime\equiv 2k^\prime\equiv 1 \pmod \eta$. 

Since $\sigma\in \mathcal{N}(\langle \Gamma_{(u_1,\ldots, u_k)}, \Gamma_{u_1}, \ldots, \Gamma_{u_k}\rangle)$, we have $E:= \sigma \psmat{p}{\eta}{\eta Mr}{k^\prime} \sigma^{-1}\in \langle \Gamma_{(u_1,\ldots, u_k)}, \Gamma_{u_1}, \ldots, \Gamma_{u_k}\rangle$. Therefore $E\in \langle \Gamma_{(u_1,\ldots, u_k)}, \Gamma_{u}\rangle$ for some $\Gamma_u\subseteq \langle \Gamma_{(u_1,\ldots, u_k)}, \Gamma_{u_1}, \ldots, \Gamma_{u_k}\rangle \char`\\ \Gamma_{(u_1,\ldots, u_k)}$. 
Suppose $E\in \Gamma_u$ for some $\Gamma_u\in \langle \Gamma_{(u_1,\ldots, u_k)}, \Gamma_{u_1}, \ldots, \Gamma_{u_k}\rangle \char`\\ \Gamma_{(u_1,\ldots, u_k)}$. Then $E[0,0]\equiv E[1,1]\equiv 0 \pmod u$. Thus 
\begin{equation}
E[0,0]+E[1,1]\equiv (p+k^\prime)(ad-bc)\equiv 2+k^\prime \equiv 0 \pmod u.
\end{equation}
The congruences $2k^\prime\equiv 1 \pmod u$ and $2+k^\prime \equiv 0 \pmod u$, imply that $2^2\equiv -1 \pmod u$, which is not possible. Therefore $E\not\in \Gamma_u$ for any $\Gamma_u\subseteq \langle \Gamma_{(u_1,\ldots, u_k)}, \Gamma_{u_1}, \ldots, \Gamma_{u_k}\rangle \char`\\ \Gamma_{(u_1,\ldots, u_k)}$.

Now suppose that $E\in \Gamma_{(u_1,\ldots, u_k)}$. Then $E[1,0]\equiv E[0,1]\equiv 0 \pmod \eta$. Consequently, we have
\begin{equation}
E[1,0]\cdot E[0,1] \equiv (p-k^\prime)^2abcd  \equiv 0 \pmod {\eta^2}.
\end{equation}
Thus $v_5((p-k^\prime)^2abcd)\geq 2v_5(\eta)=2n_0$. Since $n_0>0$, we have $p\equiv 2 \pmod 5$ and $pk^\prime\equiv 2k^\prime\equiv 1 \pmod 5$. If $v_5(p-k^\prime)>0$, then the congruence $pk^\prime\equiv 2k^\prime\equiv 1 \pmod 5$ implies $4\equiv 1 \pmod 5$, which is not possible. Hence $v_5(p-k^\prime)=0$. Consequently, we get $v_5(abcd)\geq 2n_0$.
\end{proof}
Now we compute the set $\mathcal{N}(\langle \Gamma_{(u_1,\ldots, u_k)}, \Gamma_{u_1}, \ldots, \Gamma_{u_k})\rangle\cap \Gamma_0(M)$ when $w_{5^2}\notin \langle\Gamma_0(N), w_{u_1^2}, \ldots, w_{u_k^2}\rangle$.
\begin{prop}\label{Normalizer with 25 not in quotient}
Let $N, u_1,u_2\ldots, u_k\in \N$ such that $4,9\nmid N, u_i^2||N$ and $w_{5^2}\notin \langle\Gamma_0(N), w_{u_1^2}, \ldots, w_{u_k^2}\rangle$. 
If $$\psmat{a}{b}{c}{d}\in \mathcal{N}(\langle \Gamma_{(u_1,\ldots, u_k)}, \Gamma_{u_1}, \ldots, \Gamma_{u_k}\rangle\cap \Gamma_0(M), \ then \ ac\equiv bd \equiv 0 \pmod {\mathrm{lcm}(u_1,\ldots, u_k)}.$$
\end{prop}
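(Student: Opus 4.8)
The plan is to bootstrap from the two preceding results. Put $\eta := \mathrm{lcm}(u_1,\ldots,u_k)$ and $n_0 := v_5(\eta)$, and note that $u_i^2 \| N$ forces $5^{2n_0}\|N$, so $5 \nmid M$ and $\eta$ is a product of primes $\ge 5$. Proposition~\ref{ac and bd divisible by lcm/5 Theorem} already gives $ac \equiv bd \equiv 0 \pmod{\eta/5^{n_0}}$, so it remains only to settle the prime $5$, i.e.\ to show $v_5(ac) \ge n_0$ and $v_5(bd) \ge n_0$; we may assume $n_0 \ge 1$.

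For this I would re-run the conjugation from the proof of Lemma~\ref{5 adic valuation of abcd proposition}, but squeeze more out of it. Pick a prime $p \equiv 2 \pmod \eta$ with $p \nmid N$ (which exists by Dirichlet's theorem, as $\eta$ is odd) and choose $k',r\in\Z$ with $pk' - \eta^2 M r = 1$, so that $\gamma := \psmat{p}{\eta}{\eta M r}{k'} \in \Gamma_{(u_1,\ldots,u_k)}$. Since $\sigma$ normalizes the group, $E := \sigma\gamma\sigma^{-1}$ lies in it, hence $E \in \Gamma_{(u_1,\ldots,u_k)}$ or $E \in \Gamma_u$ for some $u\|\eta$ with $u>1$. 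The latter is impossible: it would force $E[0,0] \equiv E[1,1] \equiv 0 \pmod u$, and comparing traces ($E[0,0]+E[1,1] = p + k'$) with $pk'\equiv 1 \pmod u$ would give $4 \equiv -1 \pmod u$, i.e.\ $u \mid 5$, which is excluded since $4,9\nmid N$ and $w_{25}\notin\langle\Gamma_0(N),w_{u_1^2},\ldots,w_{u_k^2}\rangle$ make every such $u$ at least $7$. So $E \in \Gamma_{(u_1,\ldots,u_k)}$, whence $E[0,1] \equiv E[1,0] \equiv 0 \pmod \eta$. Instead of multiplying these congruences (which is all Lemma~\ref{5 adic valuation of abcd proposition} needed), I would expand them individually: $E[0,1] = ab(k'-p) + \eta(a^2 - b^2 M r)$ and $E[1,0] = cd(p-k') + \eta(d^2 M r - c^2)$, so $ab(k'-p) \equiv cd(p-k') \equiv 0 \pmod \eta$. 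As $p \equiv 2$ and $k' \equiv 2^{-1} \equiv 3 \pmod 5$ we get $v_5(p-k') = 0$, hence $v_5(ab) \ge n_0$ and $v_5(cd) \ge n_0$.

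It then remains to feed in $\sigma \in \Gamma_0(M)$, i.e.\ $ad - bc = 1$, which forces $\min(v_5(ad),v_5(bc)) = 0$. A short case split finishes: if $v_5(a) = v_5(d) = 0$, then $v_5(ab)\ge n_0$ and $v_5(cd)\ge n_0$ give $v_5(b),v_5(c) \ge n_0$, so $v_5(ac) = v_5(c) \ge n_0$ and $v_5(bd) = v_5(b) \ge n_0$; the case $v_5(b) = v_5(c) = 0$ is symmetric. Combining this with Proposition~\ref{ac and bd divisible by lcm/5 Theorem} yields $ac \equiv bd \equiv 0 \pmod \eta$.

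The step I expect to be the crux --- and the one that genuinely uses the hypothesis $w_{25}\notin\langle\Gamma_0(N),w_{u_1^2},\ldots,w_{u_k^2}\rangle$ --- is the exclusion of $E \in \Gamma_u$. For $u = 5$ the obstruction $4\equiv -1\pmod u$ disappears, so if $\Gamma_5$ were an available coset $E$ could land there and the argument would recover only the weaker congruence modulo $\eta/5^{n_0}$; this is exactly why the complementary case $w_{25}\in\Gamma$ is postponed. A minor technical point I would be careful to record is that every coset label appearing here is a product of primes $\ge 5$ other than $5$ itself (hence $\ge 7$) and that $5\nmid M$, so that $\Gamma_0(M)$ imposes no $5$-adic condition beyond $ad-bc=1$.
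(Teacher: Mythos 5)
Your proof is correct, and for the essential $5$-adic step it takes a genuinely different (and shorter) route than the paper. Writing $\eta:=\mathrm{lcm}(u_1,\ldots,u_k)$ and $n_0:=v_5(\eta)$, the paper also reduces via Proposition \ref{ac and bd divisible by lcm/5 Theorem} to proving $ac\equiv bd\equiv 0\pmod{5^{n_0}}$, but it does so by conjugating the elements $\gamma_{u_1,l}$, $l\in\{1,2,3\}$, analysing in which cosets $\sigma\gamma_{u_1,l}\sigma^{-1}$ can lie (with the auxiliary moduli $\eta_0,\eta_1,\eta_2$ and an extra prime $p_1\mid u_1$ when $n_0=1$), and appealing twice to Lemma \ref{5 adic valuation of abcd proposition} (i.e.\ $v_5(abcd)\geq 2n_0$) to eliminate the bad cases, ending with $ac(l_1^2-l_2^2)\equiv 0\pmod{5^{n_0}}$ and a further valuation argument when $5\mid l_1^2-l_2^2$. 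You instead sharpen the computation inside Lemma \ref{5 adic valuation of abcd proposition} itself: the trace argument (this is indeed where $w_{25}\notin\langle\Gamma_0(N),w_{u_1^2},\ldots,w_{u_k^2}\rangle$ enters, since it rules out the label $u=5$ while every other admissible label $u||\eta$, $u>1$, is coprime to $30$, hence at least $7$ and not a divisor of $5$) forces $E=\sigma\gamma\sigma^{-1}\in\Gamma_{(u_1,\ldots,u_k)}$, and reading the two off-diagonal entries separately (your formulas $E[0,1]=ab(k'-p)+\eta(a^2-b^2Mr)$ and $E[1,0]=cd(p-k')+\eta(d^2Mr-c^2)$ are correct, and $v_5(p-k')=0$) yields $v_5(ab)\geq n_0$ and $v_5(cd)\geq n_0$, strictly more than the product bound the paper extracts there. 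The conversion to $v_5(ac),v_5(bd)\geq n_0$ via $ad-bc=1$ (so $5$ cannot divide both $ad$ and $bc$, and the two cases each finish) is valid, also in the degenerate situations where an entry vanishes, and combining with Proposition \ref{ac and bd divisible by lcm/5 Theorem} gives the statement. What your route buys is a substantially leaner argument: no $\eta_0,\eta_1,\eta_2$ bookkeeping, no special treatment of $n_0=1$, and only one conjugation; what the paper's route buys is uniformity with the $\gamma_{u_1,l}$ machinery already set up in the proof of Proposition \ref{ac and bd divisible by lcm/5 Theorem}. The structural facts you use implicitly (every element of $\langle \Gamma_{(u_1,\ldots,u_k)},\Gamma_{u_1},\ldots,\Gamma_{u_k}\rangle$ outside $\Gamma_{(u_1,\ldots,u_k)}$ lies in some $\Gamma_u$ with $u||\eta$, $u>1$; the emptiness of $\Gamma_5\cap\langle\cdots\rangle$; and $5\nmid M$ when $n_0\geq 1$) are exactly the remarks recorded in the paper before Lemma \ref{Main theorem for computing normaliser General case new version} and at the start of Lemma \ref{5 adic valuation of abcd proposition}, so I see no gap.
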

\begin{proof}
Let $\sigma:=\psmat{a}{b}{c}{d}\in \mathcal{N}(\langle \Gamma_{(u_1,\ldots, u_k)}, \Gamma_{u_1}, \ldots, \Gamma_{u_k}\rangle\cap\Gamma_0(M)$ and
 $n_0:=v_5(\mathrm{lcm}(u_1,u_2,\ldots,u_k))$. By Proposition \ref{ac and bd divisible by lcm/5 Theorem}, we know that $ac\equiv bd \equiv 0 \pmod{\frac{\mathrm{lcm}(u_1,u_2,\ldots,u_k)}{5^{n_0}}}$.
We now prove that $ac\equiv bd \equiv 0 \pmod {5^{n_0}}$. If $n_0=0$, then this is clear. Hence we assume that $n_0\geq 1$.
Recall that the assumption $w_{5^2}\notin \langle\Gamma_0(N), w_{u_1^2}, \ldots, w_{u_k^2}\rangle$ implies the set $\Gamma_5\cap \langle \Gamma_{(u_1,\ldots, u_k)}, \Gamma_{u_1}, \ldots, \Gamma_{u_k}\rangle$ is empty.

 Without loss of generality we assume that $v_5(u_1)=n_0$. 
If $n_0=1$, then there exists a prime $p_1$ ($\neq 5$) such that $n_1:=v_{p_1}(u_1)>0$ (this is possible since $w_{25}\notin \langle\Gamma_0(N), w_{u_1^2}, \ldots, w_{u_k^2}\rangle$).
We define $\eta_0:=\begin{cases}
p_1^{n_1}, \ \mathrm{if} \ n_0=1\\
5^{n_0}, \ \mathrm{otherwise}
\end{cases},$
{ $\eta_1:=\begin{cases}
5^{n_0}p_1^{n_1}, \ \mathrm{if} \ n_0=1\\
5^{n_0}, \ \mathrm{otherwise}
\end{cases}$ and $\eta_2:=\begin{cases}
p_1, \ \mathrm{if} \ n_0=1\\
5, \ \mathrm{otherwise}
\end{cases}.$
Then $\eta_1||u_1$.} Recall that for any prime $p$ if $v_p(u^\prime)>0$ for some $\Gamma_{u^\prime}\subseteq \langle \Gamma_{(u_1,\ldots, u_k)}, \Gamma_{u_1}, \ldots, \Gamma_{u_k}\rangle \char`\\ \Gamma_{(u_1,\ldots, u_k)}$, then $v_p(\mathrm{lcm}(u_1,u_2,\ldots, u_k))=v_p(u^\prime).$ 

 
 
For $l\in \{1,2,3\}$, there exist $r_{u_1,l}, k_{u_1,l}\in \Z$ such that
\begin{equation}\label{lcm Atkin Lehner u1 2}
u_1^2k_{u_1,l}+lM\frac{\mathrm{lcm}(u_1,\ldots, u_k)^2}{u_1^2}r_{u_1,l}=1,
\end{equation}
i.e., $\gamma_{u_1,l}:=\psmat{u_1}{-l\frac{\mathrm{lcm}(u_1,\ldots, u_k)}{u_1}}{M\frac{\mathrm{lcm}(u_1,\ldots, u_k)}{u_1}r_{u_1,l}}{u_1k_{u_1,l}}\in \langle \Gamma_{(u_1,\ldots, u_k)}, \Gamma_{u_1}\rangle \subseteq \langle \Gamma_{(u_1,\ldots, u_k)}, \Gamma_{u_1}, \ldots, \Gamma_{u_k}\rangle.$ 
%
%

Suppose there exist $l_1,l_2\in \{1,2,3\}$ (with $l_1\neq l_2$) such that $\sigma \gamma_{u_1,l_1}\sigma^{-1}\in \langle \Gamma_{(u_1,\ldots, u_k)}, \Gamma_{u_1^\prime}\rangle$ and $\sigma \gamma_{u_1,l_2}\sigma^{-1}\in \langle \Gamma_{(u_1,\ldots, u_k)}, \Gamma_{u_2^{\prime}}\rangle$ for some $u_1^\prime, u_2^{\prime}$ with $\eta_2\nmid u_1^\prime u_2^\prime$.
 Therefore we must have
\begin{equation}\label{conjugate in u1prime}
\sigma \gamma_{u_1,l_i}\sigma^{-1}[0,1]\equiv \sigma \gamma_{u_1,l_i}\sigma^{-1}[1,0] \equiv 0 \pmod {\frac{\mathrm{lcm}(u_1,\ldots, u_k)}{u_i^\prime}}, \ \ \mathrm{for} \ i \in \{1,2\}.
\end{equation}
Since $\eta_2\nmid u_1^\prime u_2^\prime$, we have $\eta_0|\frac{\mathrm{lcm}(u_1,\ldots, u_k)}{u_1^\prime}$ and $\eta_0|\frac{\mathrm{lcm}(u_1,\ldots, u_k)}{u_2^\prime}$. From \eqref{conjugate in u1prime} 
we have
\begin{equation}\label{conjugate in u1prime 2nd equation}
\sigma \gamma_{u_1,l_i}\sigma^{-1}[0,1]\equiv \sigma \gamma_{u_1,l_i}\sigma^{-1}[1,0]  \equiv 0\pmod {\eta_0}, \  \mathrm{for} \ i \in \{1,2\}.
\end{equation}
Combining \eqref{lcm Atkin Lehner u1 2} with \eqref{conjugate in u1prime 2nd equation},
we get
\begin{equation}
a^2\frac{\mathrm{lcm}(u_1,\ldots, u_k)^2}{u_1^2}l_i^2 + b^2\equiv c^2\frac{\mathrm{lcm}(u_1,\ldots, u_k)^2}{u_1^2}l_i^2 + d^2 \equiv 0 \pmod {\eta_0}, \ \mathrm{for} \ i \in \{1,2\}.
\end{equation}
Thus we have
\begin{equation}\label{conjugate in u1prime, u2prime final equation 2}
a^2\frac{\mathrm{lcm}(u_1,\ldots, u_k)^2}{u_1^2}(l_1^2-l_2^2) \equiv c^2\frac{\mathrm{lcm}(u_1,\ldots, u_k)^2}{u_1^2}(l_1^2-l_2^2) \equiv 0 \pmod {\eta_0}.
\end{equation}
Recall that $5^2\nmid (l_1^2-l_2^2)$ and $p\nmid (l_1^2-l_2^2)$ for any prime $p>5$.
Since $\eta_0^{n_1}>5$ and $\eta_0\nmid \frac{\mathrm{lcm}(u_1,\ldots, u_k)^2}{u_1^2}$, \eqref{conjugate in u1prime, u2prime final equation 2} implies that $(a,c)>1$. Which contradicts that $\psmat{a}{b}{c}{d}\in \Gamma_0(M)$. Hence there exist $l_1,l_2\in \{1,2,3\}$ (with $l_1\neq l_2$) such that $\sigma \gamma_{u_1,l_1}\sigma^{-1}\in \Gamma_{u_3^\prime}$ and $\sigma \gamma_{u_1,l_2}\sigma^{-1}\in  \Gamma_{u_4^{\prime}}$ for some $u_3^\prime, u_4^{\prime}$ with $\eta_2| (u_3^\prime, u_4^\prime)$ (which automatically implies that $\eta_0|(u_3^\prime, u_4^\prime)$) and $\Gamma_{u_3^{\prime}}, \Gamma_{u_4^{\prime}}\subseteq \langle \Gamma_{(u_1,\ldots, u_k)}, \Gamma_{u_1}, \ldots, \Gamma_{u_k}\rangle \char`\\ \Gamma_{(u_1,\ldots, u_k)}$. Therefore for $i\in \{1,2\}$ we must have
\begin{equation}\label{conjugate in u3prime second case}
\sigma \gamma_{u_1,l_i}\sigma^{-1}[0,0]\equiv \sigma \gamma_{u_1,l_i}\sigma^{-1}[1,1] \equiv 0 \pmod {u_{2+i}^\prime}, \ \mathrm{and}
\end{equation}
\begin{equation}\label{conjugate in u3prime inverse second case}
\sigma \gamma_{u_1,l_i}\sigma^{-1}[0,1]\equiv \sigma \gamma_{u_1,l_i}\sigma^{-1}[1,0] \equiv 0 \pmod {\frac{\mathrm{lcm}(u_1,\ldots, u_k)}{u_{2+i}^\prime}}.
\end{equation}
{If possible let $5\nmid (u_3^\prime, u_4^\prime)$ and WLOG assume that $5\nmid u_3^\prime$.}
Then $v_5(\frac{\mathrm{lcm}(u_1,\ldots, u_k)}{u_3^\prime})=n_0$. From \eqref{conjugate in u3prime inverse second case}, we get
\begin{equation}
a^2\frac{\mathrm{lcm}(u_1,\ldots, u_k)^2}{u_1^2}l_1^2 + b^2\equiv c^2\frac{\mathrm{lcm}(u_1,\ldots, u_k)^2}{u_1^2}l_1^2 + d^2 \equiv 0 \pmod {5^{n_0}},
\end{equation}
i.e.,
\begin{equation}
a^2d^2-b^2c^2\equiv 0 \pmod {5^{n_0}}.
\end{equation}
Since $ad-bc=1$, from the last equation we get
\begin{equation}
ad+bc\equiv 0 \pmod {5^{n_0}}.
\end{equation}
Therefore $v_5(a)=v_5(b)=v_5(c)=v_5(d)=0$,
in particular this implies $v_5(abcd)=0<n_0$, which contradicts Lemma \ref{5 adic valuation of abcd proposition}. 
Therefore $5|(u_3^\prime, u_4^\prime)$. In particular we have $5^{n_0}|(u_3^\prime, u_4^\prime)$. 
Recall that from \eqref{lcm Atkin Lehner u1 2} we have
\begin{equation}
l_iM\frac{\mathrm{lcm}(u_1,\ldots, u_k)^2}{u_1^2}r_{u_1,l_i}\equiv 1 \pmod {5^{n_0}} \ \mathrm{for } \ i\in \{1,2\}.
\end{equation}
Using this congruence, from \eqref{conjugate in u3prime second case} we have
\begin{equation}\label{ac+bd with l1 and l2 mod eta1 second case}
ac\frac{\mathrm{lcm}(u_1,\ldots, u_k)^2}{u_1^2}l_1^2 + bd\equiv ac\frac{\mathrm{lcm}(u_1,\ldots, u_k)^2}{u_1^2}l_2^2 + bd\equiv 0\pmod {5^{n_0}}.
\end{equation}
Thus we obtain
\begin{equation}
ac\frac{\mathrm{lcm}(u_1,\ldots, u_k)^2}{u_1^2}(l_1^2-l_2^2) \equiv 0 \pmod {5^{n_0}},
\end{equation}
equivalently we get
\begin{equation}\label{ac l1-l2 equiv 0 mod eta1 second case}
ac(l_1^2-l_2^2) \equiv 0 \pmod {5^{n_0}}.
\end{equation}
Since $(5^{n_0}, |l_1^2-l_2^2|)\in \{1,5\}$, \eqref{ac l1-l2 equiv 0 mod eta1 second case} implies that $5ac \equiv 0 \pmod {5^{n_0}}$.

Consider the case $(5^{n_0}, |l_1^2-l_2^2|)= 5$, $5ac \equiv 0 \pmod {5^{n_0}}$ but $ac\not\equiv 0 \pmod {5^{n_0}}$, i.e., $v_5(ac)=n_0-1$. If $v_5(bd)\ne v_5(ac\frac{\mathrm{lcm}(u_1,\ldots, u_k)^2}{u_1^2}l_1^2)$, then 
$$v_5(ac\frac{\mathrm{lcm}(u_1,\ldots, u_k)^2}{u_1^2}l_1^2 + bd)=\mathrm{min}\{v_5(bd), v_5(ac\frac{\mathrm{lcm}(u_1,\ldots, u_k)^2}{u_1^2}l_1^2)\}<n_0,$$
which contradicts \eqref{ac+bd with l1 and l2 mod eta1 second case}. Thus we have
\begin{equation}
v_5(bd)=v_5(ac\frac{\mathrm{lcm}(u_1,\ldots, u_k)^2}{u_1^2}l_1^2)=v_5(ac)=n_0-1.
\end{equation}
Consequently we get $v_5(abcd)<2n_0$, which contradicts Lemma \ref{5 adic valuation of abcd proposition}.
Hence we must have $ac\equiv 0 \pmod {5^{n_0}}$. Consequently, from \eqref{ac+bd with l1 and l2 mod eta1 second case} we obtain
$ac\equiv bd \equiv 0 \pmod {5^{n_0}}$.
 Thus, we obtain that if $\psmat{a}{b}{c}{d}\in \mathcal{N}(\langle \Gamma_{(u_1,\ldots, u_k)}, \Gamma_{u_1}, \ldots, \Gamma_{u_k}\rangle)\cap \Gamma_0(M)$, then $ac\equiv bd \equiv 0 \pmod {\mathrm{lcm}(u_1,u_2,\ldots, u_k)}$. 
%
%
%
\end{proof}
\begin{cor}\label{exact normalizer for u gt 5}
Let $N, u_1,u_2\ldots, u_k\in \N$ such that $4,9\nmid N, u_i^2||N$ and $w_{5^2}\notin \langle\Gamma_0(N), w_{u_1^2}, \ldots, w_{u_k^2}\rangle$. Then
$$\mathcal{N}(\langle \Gamma_{(u_1,\ldots, u_k)}, \Gamma_{u_1}, \ldots, \Gamma_{u_k}\rangle)=\langle \Gamma_{(u_1,\ldots, u_k)}, \Gamma_{u_1}, \ldots, \Gamma_{u_k},\delta_{(e,M,k)}, g: e||M, g\in S^\prime_{(u_1,\ldots,u_k),M}\cap S_{(u_1,\ldots,u_k)}^{M} \rangle.$$ Consequently we have
$\mathcal{N}(\langle\Gamma_0(N), w_{u_1^2}, \ldots, w_{u_k^2}\rangle)=\Gamma_0^*(N).$
\end{cor}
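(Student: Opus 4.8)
The plan is to combine the structural description of Lemma \ref{Main theorem for computing normaliser General case new version} with the arithmetic constraint of Proposition \ref{Normalizer with 25 not in quotient} and the inclusion of Lemma \ref{S prime contained in the normalizer proposition}. By Lemma \ref{Main theorem for computing normaliser General case new version}, $\mathcal{N}(\langle \Gamma_{(u_1,\ldots, u_k)}, \Gamma_{u_1}, \ldots, \Gamma_{u_k}\rangle)$ is contained in $\Gamma_0^*(M)$ and is generated by $\langle \Gamma_{(u_1,\ldots, u_k)}, \Gamma_{u_1}, \ldots, \Gamma_{u_k}\rangle$ together with those coset representatives (among the $\delta_{(e,M,k)}^j g_i$) that conjugate the group into itself. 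The representatives $\delta_{(e,M,k)}$, $e||M$, are already known to normalize — via $\Upsilon_{\mathrm{lcm}(u_1,\ldots,u_k)}^{-1}\delta_{(e,M,k)}\Upsilon_{\mathrm{lcm}(u_1,\ldots,u_k)}\in w_{e,N}\Gamma_0(N)$ and the fact that $w_{e,N}$ normalizes $\langle\Gamma_0(N),w_{u_1^2},\ldots,w_{u_k^2}\rangle$ — so the only remaining task is to decide which $g_i\in S_{(u_1,\ldots,u_k)}^{M}$ lie in the normalizer, i.e.\ to pin down $\mathcal{N}(\langle \Gamma_{(u_1,\ldots, u_k)}, \Gamma_{u_1}, \ldots, \Gamma_{u_k}\rangle)\cap\Gamma_0(M)$.

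For this intersection I would argue both inclusions. By Proposition \ref{Normalizer with 25 not in quotient}, any $\psmat{a}{b}{c}{d}\in\mathcal{N}(\langle \Gamma_{(u_1,\ldots, u_k)}, \Gamma_{u_1}, \ldots, \Gamma_{u_k}\rangle)\cap\Gamma_0(M)$ satisfies $ac\equiv bd\equiv 0\pmod{\mathrm{lcm}(u_1,\ldots,u_k)}$, so it lies in $\langle \Gamma_{(u_1,\ldots, u_k)}, \Gamma_{u_1}, \ldots, \Gamma_{u_k}\rangle$ or, by the very definition of $S^\prime_{(u_1,\ldots,u_k),M}$, in $S^\prime_{(u_1,\ldots,u_k),M}$; conversely $S^\prime_{(u_1,\ldots,u_k),M}\subseteq\mathcal{N}$ by Lemma \ref{S prime contained in the normalizer proposition}, and $S^\prime_{(u_1,\ldots,u_k),M}\subseteq\Gamma_0(M)$ by construction. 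Hence $\mathcal{N}(\langle \Gamma_{(u_1,\ldots, u_k)}, \Gamma_{u_1}, \ldots, \Gamma_{u_k}\rangle)\cap\Gamma_0(M)$ is, as a set, the disjoint union of $\langle \Gamma_{(u_1,\ldots, u_k)}, \Gamma_{u_1}, \ldots, \Gamma_{u_k}\rangle$ and $S^\prime_{(u_1,\ldots,u_k),M}$. Since this is a group containing $\langle \Gamma_{(u_1,\ldots, u_k)}, \Gamma_{u_1}, \ldots, \Gamma_{u_k}\rangle$, each of its non-trivial cosets has its unique $S_{(u_1,\ldots,u_k)}^{M}$-representative lying in $S^\prime_{(u_1,\ldots,u_k),M}$, that is, in $S^\prime_{(u_1,\ldots,u_k),M}\cap S_{(u_1,\ldots,u_k)}^{M}$, and conversely every element of $S^\prime_{(u_1,\ldots,u_k),M}\cap S_{(u_1,\ldots,u_k)}^{M}$ represents such a coset. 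Substituting this back into Lemma \ref{Main theorem for computing normaliser General case new version} — using that $\mathcal{N}$ is a group, so $\delta_{(e,M,k)}^j g_i\in\mathcal{N}$ iff $g_i\in\mathcal{N}$ — gives the first assertion.

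For the concluding identity I would conjugate the generating set back by $\Upsilon_{\mathrm{lcm}(u_1,\ldots,u_k)}^{-1}$: the group $\Gamma_{(u_1,\ldots, u_k)}$ becomes $\Gamma_0(N)$, each $\Gamma_{u_i}$ becomes $w_{u_i^2,N}\Gamma_0(N)$, each $\delta_{(e,M,k)}$ becomes $w_{e,N}\Gamma_0(N)$, and each $g\in S^\prime_{(u_1,\ldots,u_k),M}$ becomes $w_{{u^\prime}^2,N}\Gamma_0(N)$ for the corresponding $u^\prime||\mathrm{lcm}(u_1,\ldots,u_k)$ with ${u^\prime}^2||N$ — the last translation being exactly the one recorded just before Lemma \ref{S prime contained in the normalizer proposition}. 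All of $\Gamma_0(N)$, $w_{u_i^2,N}$, $w_{e,N}$, $w_{{u^\prime}^2,N}$ lie in $\Gamma_0^*(N)$, so $\mathcal{N}(\langle\Gamma_0(N),w_{u_1^2},\ldots,w_{u_k^2}\rangle)\subseteq\Gamma_0^*(N)$. The reverse inclusion needs only that for any $d||N$ the involution $w_{d,N}$ normalizes $\Gamma_0(N)$ and commutes with every $w_{u_i^2,N}$ modulo $\Gamma_0(N)$ (since $B(N)$ is abelian), hence normalizes $\langle\Gamma_0(N),w_{u_1^2},\ldots,w_{u_k^2}\rangle$; this yields $\Gamma_0^*(N)\subseteq\mathcal{N}$ and the equality follows.

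The only delicate point I foresee is the middle step: converting the set-level decomposition of $\mathcal{N}\cap\Gamma_0(M)$ into a clean choice of coset representatives inside $S^\prime_{(u_1,\ldots,u_k),M}\cap S_{(u_1,\ldots,u_k)}^{M}$, so that Lemma \ref{Main theorem for computing normaliser General case new version} applies verbatim and no spurious generators are introduced. Everything else is bookkeeping resting on the already-proved Proposition \ref{ac and bd divisible by lcm/5 Theorem}, Proposition \ref{Normalizer with 25 not in quotient} and Lemma \ref{5 adic valuation of abcd proposition}, where the genuine arithmetic work — in particular the $5$-adic analysis forcing $5\mid(u_3^\prime,u_4^\prime)$ — has already been carried out.
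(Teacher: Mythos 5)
Your proposal is correct and follows essentially the same route as the paper: reduce to determining $\mathcal{N}(\langle \Gamma_{(u_1,\ldots, u_k)}, \Gamma_{u_1}, \ldots, \Gamma_{u_k}\rangle)\cap \Gamma_0(M)$ via Lemma \ref{Main theorem for computing normaliser General case new version} and the fact that the $\delta_{(e,M,k)}$ normalize, identify this intersection with $\langle \Gamma_{(u_1,\ldots, u_k)}, \Gamma_{u_1}, \ldots, \Gamma_{u_k}\rangle\cup S^\prime_{(u_1,\ldots,u_k),M}$ by combining Proposition \ref{Normalizer with 25 not in quotient} with Lemma \ref{S prime contained in the normalizer proposition}, and then conjugate the resulting generating set back by $\Upsilon_{\mathrm{lcm}(u_1,\ldots,u_k)}^{-1}$ to get $\Gamma_0^*(N)$. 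Your only (harmless) variation is deriving the inclusion $\Gamma_0^*(N)\subseteq \mathcal{N}(\langle\Gamma_0(N), w_{u_1^2},\ldots,w_{u_k^2}\rangle)$ from the standard Atkin--Lehner normalization fact rather than from the paper's explicit observation that $\Upsilon_{\mathrm{lcm}(u_1,\ldots,u_k)} w_{{u^\prime}^2,N}\Upsilon_{\mathrm{lcm}(u_1,\ldots,u_k)}^{-1}\in S^\prime_{(u_1,\ldots,u_k),M}$.
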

\begin{proof}
Recall that the set $S_{(u_1,\ldots,u_k)}^M:=\{g_i : 1\leq i \leq [\Gamma_0(M):\langle \Gamma_{(u_1,\ldots, u_k)}, \Gamma_{u_1}, \ldots, \Gamma_{u_k}\rangle]\}$ forms a complete set of representatives for the left cosets of $\Gamma_0(M)$ in $\langle \Gamma_{(u_1,\ldots, u_k)}, \Gamma_{u_1}, \ldots, \Gamma_{u_k}\rangle$, and the set 
$$S_{(u_1,\ldots,u_k)}^{M,+}:= \{\delta_{(e,M,k)}^jg_i: 0\leq j \leq 1, e||M,1\leq i \leq [\Gamma_0(M):\langle \Gamma_{(u_1,\ldots, u_k)}, \Gamma_{u_1}, \ldots, \Gamma_{u_k}\rangle]\}$$ forms a complete set of representatives for the left cosets of $\Gamma_0^*(M)$ in $\langle \Gamma_{(u_1,\ldots, u_k)}, \Gamma_{u_1}, \ldots, \Gamma_{u_k}\rangle$.

By previous discussions and Lemma \ref{S prime contained in the normalizer proposition}, we know that 
$$\mathcal{N}(\langle \Gamma_{(u_1,\ldots, u_k)}, \Gamma_{u_1}, \ldots, \Gamma_{u_k}\rangle)\supseteq \langle \Gamma_{(u_1,\ldots, u_k)}, \Gamma_{u_1}, \ldots, \Gamma_{u_k},\delta_{(e,M,k)}, g: e||M, g\in S^\prime_{(u_1,\ldots,u_k),M}\cap S_{(u_1,\ldots,u_k)}^{M} \rangle.$$
If $g_i\in \mathcal{N}(\langle \Gamma_{(u_1,\ldots, u_k)}, \Gamma_{u_1}, \ldots, \Gamma_{u_k}\rangle)\cap S_{(u_1,\ldots,u_k)}^M$, then by Proposition \ref{Normalizer with 25 not in quotient} we have $g_i\in S^\prime_{(u_1,\ldots,u_k),M}$. Since $\delta_{(e,M,k)}\in \mathcal{N}(\langle \Gamma_{(u_1,\ldots, u_k)}, \Gamma_{u_1}, \ldots, \Gamma_{u_k}\rangle)$, by Lemma \ref{Main theorem for computing normaliser General case new version} we conclude that
 $$\mathcal{N}(\langle \Gamma_{(u_1,\ldots, u_k)}, \Gamma_{u_1}, \ldots, \Gamma_{u_k}\rangle)=\langle \Gamma_{(u_1,\ldots, u_k)}, \Gamma_{u_1}, \ldots, \Gamma_{u_k},\delta_{(e,M,k)}, g: e||M, g\in S^\prime_{(u_1,\ldots,u_k),M}\cap S_{(u_1,\ldots,u_k)}^{M} \rangle.$$
 This proves the first part. For the second part, it suffices to show that 
 $$\Upsilon_{\mathrm{lcm}(u_1,\ldots, u_k)}^{-1}\langle \Gamma_{(u_1,\ldots, u_k)}, \Gamma_{u_1}, \ldots, \Gamma_{u_k},\delta_{(e,M,k)}, g: e||M, g\in S^\prime_{(u_1,\ldots,u_k),M}\cap S_{(u_1,\ldots,u_k)}^{M} \rangle\Upsilon_{\mathrm{lcm}(u_1,\ldots, u_k)}=\Gamma_0^*(N).$$
 This follows from the facts that 
 
 $\bullet$ { $\Upsilon_{\mathrm{lcm}(u_1,\ldots, u_k)}^{-1}\delta_{(e,M,k)}\Gamma_{(u_1,\ldots, u_k)} \Upsilon_{\mathrm{lcm}(u_1,\ldots, u_k)}= w_{e,N}\Gamma_0(N)$ (with $e||M$)} and
 
 $\bullet$  for any $g\in S^\prime_{(u_1,\ldots,u_k),M}$, we have $\Upsilon_{\mathrm{lcm}(u_1,\ldots, u_k)}^{-1}g \Upsilon_{\mathrm{lcm}(u_1,\ldots, u_k)}\in w_{{u^\prime}^2,N}\Gamma_0(N)$ for some $u^\prime ||\mathrm{lcm}(u_1,\ldots, u_k)$. \hspace*{0.62cm} Conversely, for any $u^\prime ||\mathrm{lcm}(u_1,\ldots, u_k)$, we have $\Upsilon_{\mathrm{lcm}(u_1,\ldots, u_k)}w_{{u^\prime}^2,N} \Upsilon_{\mathrm{lcm}(u_1,\ldots, u_k)}^{-1}\in S^\prime_{(u_1,\ldots,u_k),M}$.
\end{proof}

\subsection{Exact normalizer of $\langle \Gamma_0(N), w_{25}, w_{u_2^2}, \ldots, w_{u_k^2}\rangle$}
Now suppose that
$w_{25}\in \langle \Gamma_0(N), w_{u_1^2}, w_{u_2^2}, \ldots, w_{u_k^2}\rangle$ (note that this assumption implies $25||N$). Without loss of generality we can assume that { $u_1=5$ and $5\nmid \prod_{i=2}^k u_i$}.
We first compute the normalizer of $\langle \Gamma_0(N), w_{25}\rangle$. Then with the help of this result and Proposition \ref{ac and bd divisible by lcm/5 Theorem} we compute the normalizer of $\langle \Gamma_0(N), w_{25},  w_{u_2^2}, \ldots, w_{u_k^2}\rangle$. 

By Theorem \ref{MainThm},
we know that $\mathcal{N}(\langle \Gamma_0(N), w_{25}\rangle) \subseteq \Upsilon_{5}^{-1} \Gamma_0^*(M^\prime)\Upsilon_{5}$, where $M^\prime:=\frac{N}{25} \ (\mathrm{note \ that} \ (5,M^\prime)=1)$.
We introduce the following notations:

$$\tilde{\Gamma}_{5}(M^\prime):=\Big\{\psmat{5x}{y}{M^\prime z}{5w}\in \Gamma_0(M^\prime):x,y,z,w\in \Z \Big\},$$
$$\tilde{\Gamma}_{0}^5(5M^\prime):=\Big\{\psmat{x}{5y}{5M^\prime z}{w}\in \Gamma_0(M^\prime):x,y,z,w\in \Z \Big\}, \ \mathrm{and} \ \tilde{\Gamma}_{5M^\prime}^5:=\langle \tilde{\Gamma}_{5}(M^\prime), \tilde{\Gamma}_{0}^5(5M^\prime)\rangle.$$
For $e||M^\prime$, we fix $x_{(e,{M^\prime},1)},y_{(e,{M^\prime},1)}\in \Z$ such that $ey_{(e,{M^\prime},1)}-25\frac{{M^\prime}}{e}x_{(e,{M^\prime},1)}=1$ i.e., $$\delta_{(e,{M^\prime},1)}:=\frac{1}{\sqrt{e}}\psmat{e}{5 x_{(e,{M^\prime},1)}}{5{M^\prime}}{e\cdot y_{(e,{M^\prime},1)}}\in w_{e,{M^\prime}}\Gamma_0({M^\prime}).$$ The set $\{\mathrm{id},\delta_{(e,{M^\prime},1)}: e||{M^\prime}\}$ forms a complete set of representatives for the left cosets of $\Gamma_0^*({M^\prime})$ in $\Gamma_0({M^\prime})$.

Let $B_j:=\psmat{{M^\prime} j+1}{-j}{-{M^\prime}}{1}$ and $C_i:=\psmat{1}{i}{0}{1}$. 
First observe that $[\Gamma_0(M^\prime): \tilde{\Gamma}_0^5(5{M^\prime})]=[\Gamma_0(M^\prime): \Gamma_0(25M^\prime)]$, and the set $S_{5,{M^\prime}}:=\{B_jC_i,C_i: 0\leq i,j \leq 4\}$ forms a complete set of representatives for the left cosets of $\tilde{\Gamma}_0^5(5{M^\prime})$ in $\Gamma_0({M^\prime})$.
Let $S_{(5)}^{M^\prime}$ (a subset of $S_{5,{M^\prime}}$) be a complete set of representatives for the left cosets of $\tilde{\Gamma}_{5{M^\prime}}^5$ in $\Gamma_0({M^\prime})$.
 Consequently the set 
$S_{(5)}^{{M^\prime},+}:=\big\{\delta_{(e,{M^\prime},1)}^l g: g\in S_{(5)}^{M^\prime}, e||{M^\prime},l\in \{0,1\} \big\}$
forms a complete set of representatives for the left cosets of $\tilde{\Gamma}_{5{M^\prime}}^5$ in $\Gamma_0^*({M^\prime})$. 
\begin{prop}\label{exact normalizer for 5 before conjugation}
     We have $\mathcal{N}(\tilde{\Gamma}_{5{M^\prime}}^5)=\Big\langle \tilde{\Gamma}_{5{M^\prime}}^5, \delta_{(e,{M^\prime},1)}, B_jC_0, B_0C_i: e||{M^\prime}\Big\rangle$,
    where $0\leq j,i \leq 4$ such that  ${M^\prime}j\equiv 2\pmod 5$ and $i+j\equiv 0 \pmod 5$.
\end{prop}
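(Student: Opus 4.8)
The plan is to identify $\tilde{\Gamma}_{5M'}^5$ with the full preimage in $\Gamma_0(M')$ of a subgroup of $\SL_2(\F_5)$, and thereby reduce the whole question to a small computation over $\F_5$. Let $\pi\colon\Gamma_0(M')\to\SL_2(\Z/5\Z)$ be reduction modulo $5$; it is surjective because $(5,M')=1$, and $\ker\pi=\Gamma_0(M')\cap\Gamma(5)$ is normal in $\Gamma_0(M')$. A matrix of $\Gamma_0(M')$ lies in $\tilde{\Gamma}_0^5(5M')$ precisely when its reduction mod $5$ is diagonal and in $\tilde{\Gamma}_5(M')$ precisely when its reduction mod $5$ is antidiagonal; these reductions run over all diagonal, resp.\ all antidiagonal, elements of $\SL_2(\F_5)$, which together form the monomial group $M_5:=\mathcal{N}_{\SL_2(\F_5)}(T)$ ($T$ the diagonal torus), of order $8$. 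Since moreover $\ker\pi\subseteq\tilde{\Gamma}_0^5(5M')$, one concludes $\tilde{\Gamma}_{5M'}^5=\pi^{-1}(M_5)$.

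First I would record that every $\delta_{(e,M',1)}$, $e||M'$, normalises $\tilde{\Gamma}_{5M'}^5$: conjugating by $\Upsilon_5^{-1}$ sends it into $w_{e,N}\Gamma_0(N)$, and $w_{e,N}$ normalises $\langle\Gamma_0(N),w_{25}\rangle$ since all Atkin--Lehner involutions normalise $\Gamma_0(N)$ and pairwise commute modulo $\Gamma_0(N)$ --- the mechanism already used in $\S 2$. By Theorem \ref{MainThm}, $\mathcal{N}(\tilde{\Gamma}_{5M'}^5)\subseteq\Gamma_0^*(M')$, and $\{\mathrm{id}\}\cup\{\delta_{(e,M',1)}:e||M'\}$ represents the cosets of $\Gamma_0(M')$ there; so writing $\sigma=\delta_{(e,M',1)}\gamma$ it suffices to determine $\mathcal{N}(\tilde{\Gamma}_{5M'}^5)\cap\Gamma_0(M')$. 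For $g\in\Gamma_0(M')$, conjugation carries $\pi^{-1}(M_5)$ to $\pi^{-1}(\overline{g}M_5\overline{g}^{-1})$ (because $\ker\pi$ is normal and contained in both groups), hence $g$ normalises $\tilde{\Gamma}_{5M'}^5$ if and only if $\overline{g}\in\mathcal{N}_{\SL_2(\F_5)}(M_5)$; that is, $\mathcal{N}(\tilde{\Gamma}_{5M'}^5)\cap\Gamma_0(M')=\pi^{-1}\big(\mathcal{N}_{\SL_2(\F_5)}(M_5)\big)$.

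Next I would compute $\mathcal{N}_{\SL_2(\F_5)}(M_5)$ by an orbit count on $\mathbb{P}^1(\F_5)$: the $M_5$-orbits are the three pairs $P_1=\{[1:0],[0:1]\}$, $P_2=\{[1:1],[1:4]\}$, $P_3=\{[1:2],[1:3]\}$, and $M_5$ is exactly the blockwise stabiliser of $\{P_1,P_2,P_3\}$ --- indeed already $M_5=\mathrm{Stab}(P_1)$ (an element fixing $\{[1:0],[0:1]\}$ setwise is diagonal or antidiagonal). Hence $\mathcal{N}_{\SL_2(\F_5)}(M_5)$ is the setwise stabiliser of $\{P_1,P_2,P_3\}$, so $\mathcal{N}_{\SL_2(\F_5)}(M_5)/M_5$ embeds into $S_3$; its order divides $[\SL_2(\F_5):M_5]=15$ and is at most $6$, hence $1$ or $3$. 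Then I would show it equals $3$ and exhibit the generators: with $m:=M'\bmod5$ and $mj\equiv2\pmod5$ one has $\overline{B_j}=\psmat{mj+1}{-j}{-m}{1}$, and with $i\equiv-j\pmod5$ one has $\overline{B_0C_i}=\psmat{1}{i}{-m}{-mi+1}$; computing the images of $[1:0]$ and $[0:1]$ (which pins down the image of $P_1$, hence the induced permutation of all three blocks) shows each matrix permutes $\{P_1,P_2,P_3\}$ by a nontrivial $3$-cycle, the two $3$-cycles being mutually inverse. Thus $\mathcal{N}_{\SL_2(\F_5)}(M_5)/M_5\cong\Z/3\Z$, represented by $\{\mathrm{id},\overline{B_jC_0},\overline{B_0C_i}\}$ with $\overline{B_0C_i}\equiv\overline{B_jC_0}^{-1}\bmod M_5$.

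Putting these together, $\mathcal{N}(\tilde{\Gamma}_{5M'}^5)\cap\Gamma_0(M')=\pi^{-1}(\mathcal{N}_{\SL_2(\F_5)}(M_5))=\langle\tilde{\Gamma}_{5M'}^5,B_jC_0,B_0C_i\rangle$, and adjoining the $\delta_{(e,M',1)}$ gives $\mathcal{N}(\tilde{\Gamma}_{5M'}^5)=\langle\tilde{\Gamma}_{5M'}^5,\delta_{(e,M',1)},B_jC_0,B_0C_i:e||M'\rangle$, which is exactly the claim (with $M'j\equiv2\pmod5$ and $i+j\equiv0\pmod5$). The hard part will be the explicit $\F_5$-computations in the third step --- checking that $\overline{B_j}$ and $\overline{B_0C_i}$ genuinely lie in $\mathcal{N}_{\SL_2(\F_5)}(M_5)$ and in distinct nontrivial cosets, which one must do uniformly in the residue $m$ even though only the nontriviality of the induced $3$-cycle is needed --- together with the bookkeeping confirming that $M_5$ is precisely the blockwise stabiliser of $\{P_1,P_2,P_3\}$.
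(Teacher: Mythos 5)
Your proposal is correct and arrives at exactly the stated generating set, but it organizes the decisive computation differently from the paper. The paper, after the same preliminary reductions (normalizer inside $\Gamma_0^*(M')$, the $\delta_{(e,M',1)}$ normalize, so only the transversal $\{B_jC_i,C_i\}$ of $\tilde{\Gamma}_{5M'}^5$ in $\Gamma_0(M')$ matters), proceeds by brute congruences: it conjugates one fixed element $\psmat{5}{-1}{M'r_1}{5k_1}\in\tilde{\Gamma}_5(M')$ by a candidate $B_jC_i$, solves the resulting quadratic congruences mod $5$ in $(j,i)$ to isolate two candidate cosets, and then proves sufficiency by conjugating generic elements of $\tilde{\Gamma}_5(M')$ and of $\tilde{\Gamma}_0^5(5M')$ by $B_jC_0$ and $B_0C_i$, all written out for $M'\equiv 1\pmod 5$ with the other residues declared similar. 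You instead identify $\tilde{\Gamma}_{5M'}^5$ as the full preimage $\pi^{-1}(M_5)$ of the torus normalizer $M_5=\mathcal{N}_{\SL_2(\F_5)}(T)$ under the (surjective, since $(5,M')=1$) reduction $\pi\colon\Gamma_0(M')\to\SL_2(\F_5)$, so that membership of $g\in\Gamma_0(M')$ in the normalizer becomes the single condition $\overline{g}\in\mathcal{N}_{\SL_2(\F_5)}(M_5)$; the block argument on $\mathbb{P}^1(\F_5)$ then gives $\mathcal{N}_{\SL_2(\F_5)}(M_5)/M_5\cong\Z/3\Z$ uniformly in $M'\bmod 5$, and necessity and sufficiency are settled simultaneously once $\overline{B_jC_0}$ and $\overline{B_0C_i}$ are shown to represent the two nontrivial cosets. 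This buys a conceptual explanation of the unexpected order-$3$ quotient (the Weyl-type quotient $\mathcal{N}(Q_8)/Q_8$ in $\SL_2(\F_5)$, i.e.\ $A_4/V_4$ inside $\mathrm{PSL}_2(\F_5)\cong A_5$) and eliminates the residue-by-residue case split, at the cost of invoking surjectivity of reduction mod $5$ and a little finite group theory that the paper's elementary congruence manipulation avoids. Two points to tidy in the write-up: to place $\overline{B_jC_0}$ in $\mathcal{N}_{\SL_2(\F_5)}(M_5)$ you must verify it permutes the blocks, not merely record the image of $P_1$ (computing the images of $P_1$ and $P_2$ suffices, since a bijection of $\mathbb{P}^1(\F_5)$ then forces $P_3$ onto the remaining block); I checked that this verification does go through for every residue $m=M'\bmod 5$ with $mj\equiv 2$ and $i\equiv-j$, yielding mutually inverse $3$-cycles, in agreement with the paper. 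Finally, since the normalizer lives in $\mathrm{PSL}_2(\R)$, either work with $\mathrm{PSL}_2(\F_5)$ throughout or note that $-I$ belongs to every subgroup involved, so the $\SL_2$ versus $\mathrm{PSL}_2$ bookkeeping is harmless.
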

\begin{proof}
It is easy to check that $\delta_{(e,{M^\prime},1)}\in \mathcal{N}(\tilde{\Gamma}_{5{M^\prime}}^5)$ and 
$C_i\notin \mathcal{N}(\tilde{\Gamma}_{5{M^\prime}}^5)$ for $i\neq 0$, hence it suffices to compute the $B_j$'s and $C_i$'s such that $B_jC_i\in S_{(5)}^{M^\prime}  \cap \mathcal{N}(\tilde{\Gamma}_{5{M^\prime}}^5)$ (cf. { Lemma \ref{Main theorem for computing normaliser General case new version}}).
We give a complete proof for the case ${M^\prime}\equiv 1 \pmod 5$, the proofs are similar for other cases. Assume that ${M^\prime}\equiv 1 \pmod 5$. 
There exist $r_1,k_1\in \Z$ such that $\gamma:=\psmat{5}{-1}{{M^\prime}r_1}{5k_1}\in \tilde{\Gamma}_5({M^\prime})$. Let $B_jC_i\notin \tilde{\Gamma}_{5{M^\prime}}^5$. If $B_jC_i\in \mathcal{N}(\tilde{\Gamma}_{5{M^\prime}}^5)$, then $E:=B_jC_i\gamma(B_jC_i)^{-1}\in \mathcal{N}(\tilde{\Gamma}_{5{M^\prime}}^5)$, i.e., either $E[0,0]\equiv E[1,1]\equiv 0 \pmod 5$ or $E[0,1]\equiv E[1,0]\equiv 0 \pmod 5$.

Suppose $E[0,0]\equiv E[1,1]\equiv 0 \pmod 5$, then
\begin{equation}\label{prop 1.10 eq 1.20}
i^2{M^\prime}({M^\prime}j+1)-i(2j{M^\prime}+1)+j+{M^\prime}({M^\prime}j+1) \equiv i^2(j+1)-i(2j+1)+2j+1\equiv 0 \pmod 5.
\end{equation}
If $(j,i)$ is a solution of \eqref{prop 1.10 eq 1.20} in $\Z/5\Z$, then it is easy to see that $(j,i)\in \{(2,0),(4,1), (1,2)\}$. Since $B_4C_1\in \tilde{\Gamma}_{5{M^\prime}}^5$, we get
$B_jC_i\in \{B_1C_2,B_2C_0\}$.
Observe that $(B_2C_0)\tilde{\Gamma}_{5{M^\prime}}^5=(B_1C_2)\tilde{\Gamma}_{5{M^\prime}}^5$.

Now suppose that $E[0,1]\equiv E[1,0]\equiv 0 \pmod 5$. The condition $E[1,0]\equiv 0 \pmod 5$ gives 
\begin{equation}\label{eq 2.20 section 2}
i^2-2i+2\equiv 0 \pmod 5.
\end{equation}
The solutions of \eqref{eq 2.20 section 2} are $i\in \{3,4\}$.
On the other hand, the relation $E[0,1]\equiv 0 \pmod 5$ gives 
\begin{equation}\label{prop 1.10 eq 1.22}
j^2(-i^2+2i-2)-j(2i^2-2i+2)-i^2-1\equiv 0 \pmod 5.
\end{equation}
For $i=3$ (resp., $i=4$), from \eqref{prop 1.10 eq 1.22} we get $j=0$ (resp., $j=3$). Therefore in this case $B_jC_i\in \{B_0C_3,B_3C_4\}$. It is easy to check that $(B_0C_3)\tilde{\Gamma}_{5{M^\prime}}^5=(B_3C_4)\tilde{\Gamma}_{5{M^\prime}}^5$.
Thus we obtain that 
$\mathcal{N}(\tilde{\Gamma}_{5{M^\prime}}^5)\subseteq \Big\langle \tilde{\Gamma}_{5{M^\prime}}^5, \delta_{(e,{M^\prime},1)}, B_2C_0, B_0C_3: e||{M^\prime}\Big\rangle.$
Now to prove the equality, it suffices to show that $B_2C_0, B_0C_3\in \mathcal{N}(\tilde{\Gamma}_{5{M^\prime}}^5)$. 

First consider a matrix of the form $\gamma:=\psmat{5x}{y}{{M^\prime}z}{5w}\in \tilde{\Gamma}_{5{M^\prime}}^5$. Since ${M^\prime}\equiv 1 \pmod 5$, we have
\begin{align*}
(B_2C_0\gamma(B_2C_0)^{-1})[0,0]&\equiv -2(y+z)\pmod 5,\ \ 
(B_2C_0\gamma(B_2C_0)^{-1})[1,1]\equiv 2(y+z)\pmod 5,\ \mathrm{and}\\
(B_2C_0\gamma(B_2C_0)^{-1})[0,1]&\equiv z-y\pmod 5,\ \ 
(B_2C_0\gamma(B_2C_0)^{-1})[1,0]\equiv z-y\pmod 5.
\end{align*}
Since $yz\equiv -1\pmod 5$, either $y+z\equiv 0\pmod 5$ or $z-y\equiv 0 \pmod 5$. Thus $B_2C_0\gamma(B_2C_0)^{-1}\in \tilde{\Gamma}_{5{M^\prime}}^5$.

Now consider the matrix of the form $\delta:=\psmat{x}{5y}{5{M^\prime}z}{w}\in \tilde{\Gamma}_{5{M^\prime}}^5$. We have
\begin{align*}
(B_2C_0\delta(B_2C_0)^{-1})[0,0]&\equiv -2(x+w)\pmod 5,\ \ 
(B_2C_0\delta(B_2C_0)^{-1})[1,1]\equiv -2(x+w)\pmod 5,\ \mathrm{and}\\
(B_2C_0\delta (B_2C_0)^{-1})[0,1]&\equiv x-w\pmod 5,\ \ 
(B_2C_0\delta(B_2C_0)^{-1})[1,0]\equiv -(x-w)\pmod 5.
\end{align*}
Since $xw\equiv 1\pmod 5$, either $x+w\equiv 0\pmod 5$ or $x-w\equiv 0 \pmod 5$. Thus $B_2C_0\delta (B_2C_0)^{-1}\in \tilde{\Gamma}_{5{M^\prime}}^5$.
Therefore we conclude that $B_2C_0\in \mathcal{N}(\tilde{\Gamma}_{5{M^\prime}}^5)$. A similar argument discussed so far shows that $B_0C_3\in \mathcal{N}(\tilde{\Gamma}_{5{M^\prime}}^5)$. Thus 
$\mathcal{N}(\tilde{\Gamma}_{5{M^\prime}}^5)\supseteq \Big\langle \tilde{\Gamma}_{5{M^\prime}}^5, \delta_{(e,{M^\prime},1)}, B_2C_0, B_0C_3: e||{M^\prime}\Big\rangle.$
This completes the proof.
\end{proof}
As an immediate consequence of Proposition  \ref{exact normalizer for 5 before conjugation}, we obtain
\begin{cor}\label{exact normalizer with only 5 square Atkin-Lehner}\label{cor3.6}
Let ${M^\prime}\in \N$ such that $(5,{M^\prime})=1$ and $4,9\nmid {M^\prime}$. Then $$\mathcal{N}(\langle \Gamma_0(5^2{M^\prime}),w_{5^2}\rangle)=\Big\langle \Gamma_0(5^2{M^\prime}),w_{5^2}, w_{e,5^2{M^\prime}}, \Upsilon_5^{-1} B_jC_0\Upsilon_5, \Upsilon_5^{-1}B_0C_i\Upsilon_5: e||{M^\prime}\Big\rangle,$$ where $0\leq j,i \leq 4$ such that  ${M^\prime}j\equiv 2\pmod 5$ and $i+j\equiv 0 \pmod 5$.
\end{cor}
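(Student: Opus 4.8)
The plan is to deduce the corollary from Proposition~\ref{exact normalizer for 5 before conjugation} by conjugating the latter by $\Upsilon_5$. Write $N := 5^2 M'$, so that $(5,M')=1$ and $4,9\nmid N$. By the definition recalled just before Proposition~\ref{exact normalizer for 5 before conjugation} (which rests on Lemma~\ref{lemabis}) we have $\Upsilon_5\,\langle\Gamma_0(N), w_{5^2}\rangle\,\Upsilon_5^{-1} = \tilde{\Gamma}_{5M'}^5$. Conjugation by the fixed element $\Upsilon_5$ is an automorphism of $\mathrm{PSL}_2(\R)$, and any group automorphism $\phi$ satisfies $\phi(\mathcal{N}(H)) = \mathcal{N}(\phi(H))$ and $\phi(\langle S\rangle) = \langle\phi(S)\rangle$; hence
$$\mathcal{N}\big(\langle\Gamma_0(N), w_{5^2}\rangle\big) \;=\; \Upsilon_5^{-1}\,\mathcal{N}\big(\tilde{\Gamma}_{5M'}^5\big)\,\Upsilon_5.$$
Inserting the description of $\mathcal{N}(\tilde{\Gamma}_{5M'}^5)$ from Proposition~\ref{exact normalizer for 5 before conjugation}, it remains only to identify the conjugates of the listed generators.

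The identifications I would record are these. First, $\Upsilon_5^{-1}\,\tilde{\Gamma}_{5M'}^5\,\Upsilon_5 = \langle\Gamma_0(N), w_{5^2}\rangle$ again by Lemma~\ref{lemabis}, accounting for the generators $\Gamma_0(5^2M')$ and $w_{5^2}$. Second, a one-line matrix product gives
$$\Upsilon_5^{-1}\,\delta_{(e,M',1)}\,\Upsilon_5 \;=\; \frac{1}{\sqrt{e}}\psmat{e}{x_{(e,M',1)}}{5^2M'}{e\,y_{(e,M',1)}},$$
and the defining relation $e\,y_{(e,M',1)} - 5^2\frac{M'}{e}\,x_{(e,M',1)} = 1$ is exactly the assertion that this matrix lies in $w_{e,N}\Gamma_0(N)$ with $N = 5^2M'$; so these generators may be replaced by the $w_{e,5^2M'}$ with $e||M'$. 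Third, the generators $B_jC_0$ and $B_0C_i$ conjugate verbatim to $\Upsilon_5^{-1}B_jC_0\Upsilon_5$ and $\Upsilon_5^{-1}B_0C_i\Upsilon_5$, with the selection constraints $M'j\equiv 2\pmod 5$ and $i+j\equiv 0\pmod 5$ inherited unchanged. Assembling these contributions yields precisely the generating set in the statement.

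I do not anticipate a genuine obstacle: the corollary is a transport-of-structure consequence of Proposition~\ref{exact normalizer for 5 before conjugation}. The only point deserving care is the bookkeeping in the displayed conjugation of $\delta_{(e,M',1)}$---verifying that one lands in an \emph{Atkin--Lehner} coset $w_{e,N}\Gamma_0(N)$ at the full level $N = 5^2M'$, and not merely somewhere in $\Gamma_0^*(N)$---which is settled by the explicit form of $\delta_{(e,M',1)}$ and its defining congruence.
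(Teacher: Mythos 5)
Your proposal is correct and matches the paper's reasoning: the paper derives Corollary \ref{cor3.6} as an immediate consequence of Proposition \ref{exact normalizer for 5 before conjugation} by transporting $\mathcal{N}(\tilde{\Gamma}_{5M'}^5)$ back through conjugation by $\Upsilon_5$, exactly as you do, and your explicit check that $\Upsilon_5^{-1}\delta_{(e,M',1)}\Upsilon_5$ lands in $w_{e,5^2M'}\Gamma_0(5^2M')$ is the same bookkeeping the paper records (in greater generality) before Corollary \ref{exact normalizer for u gt 5}.
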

We now prove that  $\mathcal{N}(\langle \Gamma_0(N), w_{25})\subseteq \mathcal{N}(\langle \Gamma_0(N), w_{25},  w_{u_2^2}, \ldots, w_{u_k^2}\rangle)$, and the following lemma plays a very important role in proving such result.

\begin{lema}\label{The order 3 element commutes with Atkin-Lehner operators}\label{lem3.9}
Let ${M^\prime}\in \N$ such that $4,9\nmid M^\prime$, $(5,{M^\prime})=1$ and $W$ be any subgroup generated by the Atkin-Lehner involutions such that $w_{5^2, 5^2M^\prime}\in W$. 
 Let $w_d:=w_{d,5^2M^\prime}\in \langle \Gamma_0(5^2{M^\prime}), W\rangle$ be an Atkin-Lehner involution and $\sigma \in \{\Upsilon_5^{-1} B_jC_0\Upsilon_5, \Upsilon_5^{-1}B_0C_i\Upsilon_5\}$, where $i,j$ are defined as in Corollary \ref{cor3.6}.
  Then $\sigma w_d\sigma^{-1}\in \langle \Gamma_0(5^2{M^\prime}), W\rangle$ if and only if $\frac{d}{(5^2,d)}\equiv \pm 1 \pmod 5$.
\end{lema}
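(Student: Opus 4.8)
The plan is to reduce the conjugation condition to an explicit congruence computation modulo $5$, by using the conjugation by $\Upsilon_5$ to pass from the level $N=25M'$ picture to the level $M'$ picture, where the Atkin--Lehner involutions $w_{d}$ and the candidate order-$3$ elements $B_jC_0,B_0C_i$ have manageable matrix representatives. First I would write $\sigma = \Upsilon_5^{-1}B\Upsilon_5$ with $B\in\{B_jC_0,B_0C_i\}$, so that $\sigma w_d \sigma^{-1}\in\langle\Gamma_0(25M'),W\rangle$ is equivalent to $B\,(\Upsilon_5 w_d\Upsilon_5^{-1})\,B^{-1}\in \Upsilon_5\langle\Gamma_0(25M'),W\rangle\Upsilon_5^{-1}=\langle\tilde\Gamma^5_{5M'},\Upsilon_5 W'\Upsilon_5^{-1}\rangle$, where $W'$ denotes the part of $W$ generated by the non-square Atkin--Lehner involutions. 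By Lemma \ref{lemabis}, $\Upsilon_5 w_{d,25M'}\Upsilon_5^{-1}$ is (up to an element of the ambient group) $w_{d',M'}$ or $w_{d',5M'}$ with $d'=d/(25,d)$; the two cases $(5,d)=1$ and $25\| d$ (the only possibilities since $25\|N$) have to be treated, but in both the relevant invariant is $d' = d/(25,d)$. So the statement becomes: for $B\in\{B_2C_0,B_0C_3\}$ (taking $M'\equiv 1\bmod 5$ as the representative case, as in Proposition \ref{exact normalizer for 5 before conjugation}, the other residues being analogous), $B\, w_{d'}\, B^{-1}$ lies in the group generated by $\tilde\Gamma^5_{5M'}$ and the remaining non-square Atkin--Lehner involutions iff $d'\equiv\pm1\pmod 5$.

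The key computation is then the following. Choose an explicit matrix representative for $w_{d',M'}$ (or $w_{d',5M'}$) of Atkin--Lehner shape $\frac{1}{\sqrt{d'}}\psmat{d'x}{y}{M'z}{d'w}$ with $d'xw - (M'/d')yz=1$, conjugate it by $B$, and read off the four entries modulo $5$. Since $B\in\mathrm{SL}_2(\Z)$ and $\sqrt{d'}$ is a common scalar, the membership in $\tilde\Gamma^5_{5M'}$ (suitably extended by the non-square $w$'s) is detected by exactly the mod-$5$ divisibility pattern of the conjugated matrix: either the off-diagonal entries vanish mod $5$ (landing, up to the non-square part, in $\tilde\Gamma^5_{5M'}$-type pieces) or the diagonal entries do (landing in $\tilde\Gamma_5(M')$-type pieces). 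A direct expansion $B\psmat{d'x}{y}{M'z}{d'w}B^{-1}$ with $B=\psmat{2M'+1}{-2}{-M'}{1}$ (and the $B_0C_3$ case similarly) shows that the relevant entry is, mod $5$, a fixed quadratic expression in $d'$ — concretely one finds that both congruence alternatives can be met precisely when $d'^2\equiv 1\pmod 5$, i.e. $d'\equiv\pm1$, using $d'xw\equiv 1$ mod the relevant prime power together with the fact that $(\Z/5\Z)^\times$ has no element of order $4$ that is a square. Conversely, when $d'\not\equiv\pm1\pmod 5$ one exhibits $\gamma\in\tilde\Gamma^5_{5M'}$ (of the type used in Proposition \ref{exact normalizer for 5 before conjugation}, e.g. $\psmat{5}{-1}{M'r_1}{5k_1}$ pre/post-composed with the $w_{d'}$ representative) whose $B$-conjugate has neither diagonal nor off-diagonal pair divisible by $5$, hence fails to lie in the target group; here one must also check it cannot be absorbed into the non-square part $\Upsilon_5 W'\Upsilon_5^{-1}$, which follows because those elements are not in $\mathrm{PGL}_2(\Q)$ while $B w_{d'} B^{-1}$-type elements with $d'$ a square sit in $\mathrm{PGL}_2(\Q)$ (as in the proof of Theorem \ref{MainThm}).

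Finally I would package the two directions: the "if" direction reuses verbatim the structure of the second half of the proof of Proposition \ref{exact normalizer for 5 before conjugation} (the calculations showing $B_2C_0,B_0C_3$ normalize $\tilde\Gamma^5_{5M'}$), now carrying along the extra factor $\Upsilon_5 w_{d'}\Upsilon_5^{-1}$ and observing that the single extra parameter $d'$ mod $5$ enters only through $d'^2\equiv1$; the "only if" direction is the contrapositive construction of the previous paragraph. One should also handle the three residue classes of $M'$ mod $5$ (the condition $M'j\equiv2$, $i+j\equiv0$ makes $(i,j)$ depend on $M'\bmod 5$), but by symmetry of the computation in Proposition \ref{exact normalizer for 5 before conjugation} these are all the same up to relabelling. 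The main obstacle I anticipate is organizational rather than conceptual: keeping straight, across the two cases $(5,d)=1$ versus $25\|d$ and across the cases $B=B_2C_0$ versus $B_0C_3$, exactly which pair of matrix entries must vanish mod $5$ and verifying that in each case the resulting congruence in $d'$ is equivalent to $d'\equiv\pm1\pmod5$ — this is a finite but somewhat delicate bookkeeping of $2\times2$ matrix products modulo $5$, together with the (routine, via Lemma \ref{lemabis}) normalization of $\Upsilon_5 w_{d,25M'}\Upsilon_5^{-1}$.
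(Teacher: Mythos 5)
Your computational core mirrors the paper's: after conjugating by $\Upsilon_5$ one takes $B\in\{B_2C_0,B_0C_3\}$ (say for $M^\prime\equiv 1\pmod 5$), expands an explicit Atkin--Lehner representative, reduces mod $5$, and uses that the squares in $(\Z/5\Z)^\times$ are exactly $\pm1$ to land on the condition $d^{\prime 2}\equiv 1\pmod 5$; likewise your reduction of the case $25\| d$ to $d^\prime=d/(25,d)$ is harmless because $\sigma$ already normalizes $\langle\Gamma_0(N),w_{25}\rangle$. The ``if'' direction, run as you describe (the Proposition \ref{exact normalizer for 5 before conjugation}-style computation carried out with the extra factor), would go through.

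The genuine gap is in the ``only if'' direction, at the step where you assert that membership of $\sigma w_d\sigma^{-1}$ in $\langle\Gamma_0(5^2M^\prime),W\rangle$ ``is detected by exactly the mod-$5$ divisibility pattern of the conjugated matrix'' $B\,w_{d^\prime}\,B^{-1}$. For non-square $d^\prime$ that matrix is not even in $\mathrm{PSL}_2(\Q)$, so the diagonal/off-diagonal divisibility criterion (which characterizes when an integral matrix of $\Gamma_0(M^\prime)$ lies in $\tilde{\Gamma}^5_{5M^\prime}$) cannot be applied to it directly; one must first multiply by an Atkin--Lehner representative, and for that one must know \emph{which} Atkin--Lehner coset of the group the conjugate could possibly occupy. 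Your only remark on this absorption problem treats the case ``$d^\prime$ a square'', which is the trivial case, and does not rule out, for non-square $d^\prime$, an identity of the form $\sigma w_d\sigma^{-1}=w_{d^{\prime\prime}}w_{25}^{n_0}\gamma$ with $w_{d^{\prime\prime}}\in W$, $w_{d^{\prime\prime}}\neq w_d$. The paper closes exactly this hole first: since $\sigma\in\mathrm{PSL}_2(\Z[\frac{1}{5}])$, one has $\sigma w_d\sigma^{-1}\in\mathrm{PSL}_2(\Z[\frac{1}{5},\frac{1}{\sqrt d}])\setminus\mathrm{PSL}_2(\Z[\frac{1}{5}])$, which forces the Atkin--Lehner part in any such expression to be $w_d$ itself; only then is membership equivalent to $E:=\Upsilon_5\,w_d^{-1}\sigma w_d\sigma^{-1}\,\Upsilon_5^{-1}\in\tilde{\Gamma}^5_{5M^\prime}$, and only then is the mod-$5$ test of the four entries of the now integral matrix $E$ (which come out as $-2(1\pm d)$ and $1\mp d$) both necessary and sufficient, giving the stated equivalence. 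Without this coset-pinning, exhibiting some conjugate that fails your divisibility pattern does not preclude membership through a different Atkin--Lehner coset, so your contrapositive argument does not conclude.
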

\begin{proof} 
Note that we have $(5^2,d)\in \{1,5^2\}$. Since $\sigma w_{5^2}\sigma^{-1}\in \langle \Gamma_0(5^2{M^\prime}), w_{5^2}\rangle$, without loss of generality we can assume that $(5^2,d)=1$ (note that this also implies $d||M^\prime$).

Since $\sigma\in  \mathrm{PSL}_2(\mathbb{Z} [ \frac{1}{5} ])\char`\\  \mathrm{PSL}_2(\mathbb{Z})$, it is easy to observe that  
$\sigma w_d\sigma^{-1}\in  \mathrm{PSL}_2(\mathbb{Z}[ \frac{1}{5}, \frac{1}{\sqrt{d}}]) \char`\\  \mathrm{PSL}_2(\mathbb{Z}[ \frac{1}{5}]).$
Any arbitrary element of $\langle \Gamma_0(5^2{M^\prime}), W\rangle$ can be written in the form
$w_{d^\prime}^{m_1} w_{5^2}^{m_0}\gamma,$
where $m_0,m_1\in \{0,1\}$, $\gamma \in \Gamma_0(5^2{M^\prime})$ and $w_{d^\prime}\in W$. 

 If $\sigma w_d\sigma^{-1}\in \langle \Gamma_0(5^2{M^\prime}), W\rangle$, then we have $\sigma w_d\sigma^{-1}=w_{d^\prime}^{n}w_{5^2}^{n_0}\gamma$ i.e., $w_{d^\prime}^{-n}\sigma w_d\sigma^{-1}=w_{5^2}^{n_0}\gamma$, for some $n,n_0\in \{0,1\}, $ $w_{d^\prime}\in W$ and $\gamma\in \Gamma_0(5^2{M^\prime})$. If $w_{d^\prime}^{n}\ne w_d$, then $w_{d^\prime}^{-n}\sigma w_d\sigma^{-1}\not\in  \mathrm{PSL}_2(\Z[\frac{1}{5}])$ but $w_{5^2}^{n_0}\gamma\in  \mathrm{PSL}_2(\Z[\frac{1}{5}])$. Hence we have 
 $w_{d}^{-1}\sigma w_d\sigma^{-1}=w_{5^2}^{n_0}\gamma, \ \mathrm{equivalently \ we \ have \ } E:=\Upsilon_5 w_{d}^{-1}\sigma w_d\sigma^{-1}\Upsilon_5^{-1}\in \tilde{\Gamma}_{5{M^\prime}}^5.$
 
 Recall that if $\delta\in \tilde{\Gamma}_{5{M^\prime}}^5$, then we must have 
 $\mathrm{either} \ \delta[0,0]\equiv \delta[1,1]\equiv 0 \pmod 5$ or $\delta[1,0]\equiv \delta[0,1]\equiv 0 \pmod 5$ (note that we always have $\delta[1,0]\equiv 0 \pmod {M^\prime}$). We give a complete proof for the case ${M^\prime}\equiv 1 \pmod 5$. The proofs are similar for the other cases.
 
 Let ${M^\prime}\equiv 1 \pmod 5$. In this case, without loss of generality we can assume that $\sigma=\Upsilon_5^{-1}B_2C_0\Upsilon_5$, and consider a representative $w_d=\frac{1}{\sqrt{d}}\psmat{xd}{y}{5^2{M^\prime}z}{wd}$ such that $xwd^2-5^2{M^\prime}yz=d$. 
 Considering the modulo $5$ reductions, we get
 $$E[0,0]\equiv 2-2w^2d+1 \equiv -2(1+w^2d) \equiv -2(1\pm d) \pmod 5, \ E[0,1]\equiv 4 + 2 -  w^2 d \equiv 1-w^2d \equiv 1-(\pm d) \pmod 5,$$
 $$E[1,0] \equiv -x^2 d + 1 \equiv 1-x^2d \equiv 1-(\pm d) \pmod 5, \ E[1,1]\equiv -2 x^2 d  + 2 +1 \equiv -2(1+x^2d)\equiv -2(1\pm d) \pmod 5.$$
 From the last relations it is easy to see that either 
 $E[0,0]\equiv E[1,1]\equiv 0 \pmod 5$ or $E[1,0]\equiv E[0,1]\equiv 0 \pmod 5$ if and only if $d\equiv \pm 1 \pmod 5.$
 
 Therefore for ${M^\prime}\equiv 1 \pmod 5$, we have $\sigma w_d\sigma^{-1}\in \langle \Gamma_0(5^2{M^\prime}), W\rangle$ if and only if $d\equiv \pm 1 \pmod 5$. The result follows.
\end{proof}

\begin{cor}\label{normalizer of 25 contained in normalizer of 25, u2,...,uk}
Let $N, u_2,\ldots,u_k\in \N$ such that $4,9\nmid N$, $u_i^2||N$ and $5\nmid u_i$ for $i\in \{2,\ldots, k\}$. Then 
$$\mathcal{N}(\langle \Gamma_0(N), w_{25})\subseteq \mathcal{N}(\langle \Gamma_0(N), w_{25}, w_{u_2^2}, \ldots, w_{u_k^2}\rangle).$$
\end{cor}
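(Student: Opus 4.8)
The plan is to reduce the claimed inclusion to a check on an explicit generating set of $\mathcal{N}(\langle\Gamma_0(N),w_{25}\rangle)$. Write $N=5^2M'$ with $(5,M')=1$, set $W:=\langle w_{25},w_{u_2^2},\ldots,w_{u_k^2}\rangle\leq B(N)$ and $G:=\langle\Gamma_0(N),W\rangle=\langle\Gamma_0(N),w_{25},w_{u_2^2},\ldots,w_{u_k^2}\rangle$; note $w_{5^2,N}\in W$, so $W$ is of the type required by Lemma \ref{lem3.9}. By Corollary \ref{cor3.6}, together with the standard fact that $\Gamma_0(N)$, $w_{25}$ and the $w_{e,N}$ with $e||M'$ generate $\Gamma_0^*(N)$ (any $w_{d,N}$ with $d||N$ either has $d||M'$ or $d=5^2e$ with $e||M'$, and $w_{5^2e,N}\in w_{5^2,N}w_{e,N}\Gamma_0(N)$), the group $\mathcal{N}(\langle\Gamma_0(N),w_{25}\rangle)$ is generated by $\Gamma_0^*(N)$ and the two order-$3$ elements $\sigma_1:=\Upsilon_5^{-1}B_jC_0\Upsilon_5$ and $\sigma_2:=\Upsilon_5^{-1}B_0C_i\Upsilon_5$. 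Since $\mathcal{N}(G)$ is a subgroup of $\mathrm{PSL}_2(\R)$, it suffices to show each of these generators lies in $\mathcal{N}(G)$.

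First I would dispose of $\Gamma_0^*(N)$. Any $g\in\Gamma_0^*(N)$ normalizes $\Gamma_0(N)$, and because $B(N)=\Gamma_0^*(N)/\Gamma_0(N)$ is abelian, $g\,w_d\,g^{-1}\in w_d\Gamma_0(N)\subseteq G$ for every Atkin-Lehner generator $w_d$ of $W$. Thus conjugation by $g$, and likewise by $g^{-1}\in\Gamma_0^*(N)$, carries each generator of $G$ into $G$, so $g\in\mathcal{N}(G)$; hence $\Gamma_0^*(N)\subseteq\mathcal{N}(G)$. In particular $\langle\Gamma_0(N),w_{25}\rangle\subseteq G$.

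The substantive step is $\sigma_1,\sigma_2\in\mathcal{N}(G)$, and this is where Lemma \ref{lem3.9} is used. The key arithmetic observation is that for $j\in\{2,\ldots,k\}$ we have $5\nmid u_j$, hence $(5^2,u_j^2)=1$ and, since the nonzero squares modulo $5$ are exactly $\pm1$, $\frac{u_j^2}{(5^2,u_j^2)}=u_j^2\equiv\pm1\pmod 5$. Applying Lemma \ref{lem3.9} with this $W$ and $w_d=w_{u_j^2,N}$ (and trivially $w_d=w_{25}$) gives $\sigma_r w_{u_j^2}\sigma_r^{-1}\in G$ and $\sigma_r w_{25}\sigma_r^{-1}\in\langle\Gamma_0(N),w_{25}\rangle\subseteq G$ for $r\in\{1,2\}$. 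Moreover $\sigma_r\in\mathcal{N}(\langle\Gamma_0(N),w_{25}\rangle)$ gives $\sigma_r\Gamma_0(N)\sigma_r^{-1}\subseteq\langle\Gamma_0(N),w_{25}\rangle\subseteq G$. Since conjugation by $\sigma_r$ is an automorphism carrying the generating set $\{\Gamma_0(N),w_{25},w_{u_2^2},\ldots,w_{u_k^2}\}$ of $G$ into $G$, we get $\sigma_r G\sigma_r^{-1}\subseteq G$.

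It remains to obtain the reverse inclusion $\sigma_r^{-1}G\sigma_r\subseteq G$. Here I would use that $\sigma_1$ and $\sigma_2$ are mutually inverse modulo $\langle\Gamma_0(N),w_{25}\rangle$, i.e. $\sigma_1\sigma_2\in\langle\Gamma_0(N),w_{25}\rangle\subseteq G$; writing $\sigma_1^{-1}=\sigma_2(\sigma_1\sigma_2)^{-1}$ and conjugating, $\sigma_1^{-1}G\sigma_1=\sigma_2\big((\sigma_1\sigma_2)^{-1}G(\sigma_1\sigma_2)\big)\sigma_2^{-1}=\sigma_2 G\sigma_2^{-1}\subseteq G$, and symmetrically $\sigma_2^{-1}G\sigma_2\subseteq G$. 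Hence $\sigma_1,\sigma_2\in\mathcal{N}(G)$, and the proof is complete. I expect no genuine obstacle here beyond carefully invoking Lemma \ref{lem3.9} with the right subgroup $W$; the only real input is the elementary remark that $u_j^2\equiv\pm1\pmod 5$ whenever $5\nmid u_j$, which is exactly the hypothesis $5\nmid u_i$ in the statement, and everything else is bookkeeping with the abelian group $B(N)$ and the generating set from Corollary \ref{cor3.6}.
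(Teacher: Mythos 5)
Your proposal is correct and follows essentially the same route as the paper: reduce to the generating set $\{\Gamma_0^*(N),\sigma_1,\sigma_2\}$ of $\mathcal{N}(\langle\Gamma_0(N),w_{25}\rangle)$ from Corollary \ref{cor3.6} and apply Lemma \ref{lem3.9} to the Atkin--Lehner elements $w_{25^m d^2}$ of $\langle\Gamma_0(N),w_{25},w_{u_2^2},\ldots,w_{u_k^2}\rangle$, using that $d^2\equiv\pm1\pmod 5$ when $5\nmid d$. Your extra care with the two-sided inclusion $\sigma_r G\sigma_r^{-1}=G$ (via $\sigma_1\sigma_2\in\langle\Gamma_0(N),w_{25}\rangle$, a fact the paper also asserts and which is a direct matrix check) is a harmless refinement of the paper's argument, not a different method.
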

\begin{proof}
Recall that $w_{25}, w_{e,N}\in \mathcal{N}(\langle \Gamma_0(N), w_{25},  w_{u_2^2}, \ldots, w_{u_k^2}\rangle)$ for $e||\frac{N}{25}$. Any element of $\langle \Gamma_0(N), w_{25},  w_{u_2^2}, \ldots, w_{u_k^2}\rangle \char`\\ \Gamma_0(N)$ can be written in the form $w_{25^{m}d^2}\gamma$ for some $\gamma\in \Gamma_0(N)$, $m\in \{0,1\}$ and $d||\mathrm{lcm}(u_2,\ldots, u_k)$. Let $\sigma \in \{\Upsilon_5^{-1} B_jC_0\Upsilon_5, \Upsilon_5^{-1}B_0C_i\Upsilon_5\}$, where $i,j$ are defined as in Corollary \ref{cor3.6}. Since $\frac{25^{m}d^2}{(25,25^{m}d^2)}\equiv \pm 1\pmod 5$, by Lemma \ref{The order 3 element commutes with Atkin-Lehner operators} we get $\sigma w_{25^{m}d^2}\sigma^{-1}\in \langle \Gamma_0(N), w_{25},  w_{u_2^2}, \ldots, w_{u_k^2}\rangle$. Thus $\sigma\in \mathcal{N}(\langle \Gamma_0(N), w_{25},  w_{u_2^2}, \ldots, w_{u_k^2}\rangle)$. Now the result follows from Corollary \ref{exact normalizer with only 5 square Atkin-Lehner}.
\end{proof}
We are now ready to compute the normalizer of $\langle \Gamma_0(N), w_{25}, w_{u_2^2}, \ldots, w_{u_k^2}\rangle$.

\begin{prop}\label{Theorem 5.11}
Let $N, u_2,\ldots,u_k\in \N$ such that $4,9\nmid N$, $u_i^2||N$ and $5\nmid u_i$ for $i\in \{2,\ldots, k\}$. Then
$$\mathcal{N}(\langle \Gamma_0(N), w_{25}, w_{u_2^2}, \ldots, w_{u_k^2}\rangle) = \mathcal{N}(\langle \Gamma_0(N), w_{25}\rangle).$$
\end{prop}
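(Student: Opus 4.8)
The inclusion $\mathcal{N}(\langle\Gamma_0(N),w_{25}\rangle)\subseteq\mathcal{N}(\langle\Gamma_0(N),w_{25},w_{u_2^2},\ldots,w_{u_k^2}\rangle)$ is exactly Corollary \ref{normalizer of 25 contained in normalizer of 25, u2,...,uk}, so the whole content of the Proposition is the reverse inclusion; write $\Gamma:=\langle\Gamma_0(N),w_{25}\rangle$ and $\Gamma':=\langle\Gamma_0(N),w_{25},w_{u_2^2},\ldots,w_{u_k^2}\rangle$. I would begin by recording the two structural facts that drive the argument. First, since $B(N)$ is abelian modulo $\Gamma_0(N)$, every Atkin--Lehner involution $w_{d,N}$ with $d||N$ normalizes $\Gamma$; in particular $\Gamma'\subseteq\Gamma_0^*(N)\subseteq\mathcal{N}(\Gamma)$. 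Second, by Corollary \ref{cor3.6} the group $\mathcal{N}(\Gamma)$ is completely explicit: it equals $\langle\Gamma_0^*(N),\rho_1,\rho_2\rangle$, where $\rho_1:=\Upsilon_5^{-1}B_jC_0\Upsilon_5$ and $\rho_2:=\Upsilon_5^{-1}B_0C_i\Upsilon_5$ are the two order-$3$ generators (here $\langle\Gamma_0(N),w_{25},w_{e,N}:e||N/25\rangle=\Gamma_0^*(N)$). Thus it suffices to check that every generator of $\mathcal{N}(\Gamma')$ already lies in $\langle\Gamma_0^*(N),\rho_1,\rho_2\rangle$.

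Next I would pass to the conjugated picture of Section \ref{section exact normalizer computation}. Set $u:=\mathrm{lcm}(u_2,\ldots,u_k)$ (so $5\nmid u$), $\eta:=\mathrm{lcm}(u_1,\ldots,u_k)=5u$ with $u_1=5$, $M:=N/\eta^2=N/(25u^2)$, $\Upsilon:=\Upsilon_\eta$, and $G':=\Upsilon\Gamma'\Upsilon^{-1}=\langle\Gamma_{(u_1,\ldots,u_k)},\Gamma_{u_1},\ldots,\Gamma_{u_k}\rangle$. By Theorem \ref{MainThm}, $\mathcal{N}(G')\subseteq\Gamma_0^*(M)$, and by Lemma \ref{Main theorem for computing normaliser General case new version} the group $\mathcal{N}(G')$ is generated by $G'$, the coset representatives $\delta_{(e,M,k)}$ with $e||M$, and those left-coset representatives $g_i$ of $G'$ in $\Gamma_0(M)$ that normalize $G'$. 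The first two families cause no trouble: conjugating back by $\Upsilon^{-1}$ sends $G'$ into $\Gamma'\subseteq\mathcal{N}(\Gamma)$ and sends each $\delta_{(e,M,k)}$ into $w_{e,N}\Gamma_0(N)\subseteq\Gamma_0^*(N)\subseteq\mathcal{N}(\Gamma)$ (here $e||M$ and $(M,\eta)=1$ force $e||N$). So the whole problem collapses to describing $\mathcal{N}(G')\cap\Gamma_0(M)$.

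For that I would invoke Proposition \ref{ac and bd divisible by lcm/5 Theorem}: since $v_5(\eta)=1$, every $\psmat{a}{b}{c}{d}\in\mathcal{N}(G')\cap\Gamma_0(M)$ satisfies $ac\equiv bd\equiv0\pmod{u}$ (this is the analogue here of what Proposition \ref{Normalizer with 25 not in quotient} does in the case $w_{25}\notin\Gamma'$). Now split the representatives $g_i$ into two cases. If moreover $ac\equiv bd\equiv0\pmod{\eta}$, then $g_i\in S^\prime_{(u_1,\ldots,u_k),M}$, so (by the discussion preceding Lemma \ref{S prime contained in the normalizer proposition}) $\Upsilon^{-1}g_i\Upsilon\in w_{{u^\prime}^2,N}\Gamma_0(N)$ for some $u^\prime||\eta$; writing $u^\prime=5^{\varepsilon}u^{\prime\prime}$ with $5\nmid u^{\prime\prime}$ and using that $(u^{\prime\prime})^2||N$, this lands in $\Gamma_0^*(N)\subseteq\mathcal{N}(\Gamma)$. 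Otherwise $5\nmid ac$ or $5\nmid bd$, and here I would rerun the modulo-$5$ analysis from the proof of Proposition \ref{exact normalizer for 5 before conjugation} inside $G'$: because $w_{25}\in\Gamma'$ we have $\Gamma_5\subseteq G'$, so conjugating a matrix of $\Gamma_5$ by $g_i$ and reducing modulo $5$ pins $g_i$, modulo $G'$, to one of the two cosets represented in Proposition \ref{exact normalizer for 5 before conjugation} by $B_2C_0$ and $B_0C_3$ (and the analogous pair for the other residues of $N/25$ modulo $5$); for such $g_i$ the matrix $\Upsilon^{-1}g_i\Upsilon$ agrees, up to an element of $\Gamma_0^*(N)$, with $\rho_1$ or $\rho_2$, hence lies in $\langle\Gamma_0^*(N),\rho_1,\rho_2\rangle$. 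Collecting the cases, every generator of $\mathcal{N}(\Gamma')$ lies in $\mathcal{N}(\Gamma)$, so $\mathcal{N}(\Gamma')\subseteq\mathcal{N}(\Gamma)$ and equality follows.

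The step I expect to be the real obstacle is the last one: carrying out the modulo-$5$ computation for the coset representatives $g_i$ uniformly over the residue class of $N/25$ modulo $5$ and in the presence of the extra Atkin--Lehner generators $w_{u_2^2},\ldots,w_{u_k^2}$, and verifying in particular that it yields precisely the two order-$3$ cosets already present for $\langle\Gamma_0(N),w_{25}\rangle$ and nothing more. The rest reduces to bookkeeping with conjugation by $\Upsilon$ and with $p$-adic valuations of matrix entries, controlled by Proposition \ref{ac and bd divisible by lcm/5 Theorem} and Corollary \ref{cor3.6}.
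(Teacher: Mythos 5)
Your framework for the hard inclusion matches the paper's setup (the easy inclusion via Corollary \ref{normalizer of 25 contained in normalizer of 25, u2,...,uk}, the reduction through Theorem \ref{MainThm} and Lemma \ref{Main theorem for computing normaliser General case new version} to the representatives $g_i\in\Gamma_0(M)$ with $M=N/(25u^2)$, the handling of the $\delta_{(e,M,k)}$, and Proposition \ref{ac and bd divisible by lcm/5 Theorem} giving $ac\equiv bd\equiv 0\pmod u$), and your Case A is fine. But the step you yourself flag as ``the real obstacle'' is a genuine gap, and it is precisely where the content of the reverse inclusion lies. It cannot be discharged by ``rerunning'' Proposition \ref{exact normalizer for 5 before conjugation}: that computation takes place in $\Gamma_0(M')$, $M'=N/25$, for the group $\tilde{\Gamma}^5_{5M'}=\Upsilon_5\langle\Gamma_0(N),w_{25}\rangle\Upsilon_5^{-1}$, whereas here the ambient group is $\Gamma_0(M)$ and $G'=\Upsilon_{5u}\Gamma'\Upsilon_{5u}^{-1}$, whose membership conditions interleave congruences mod $5$ and mod $u$; a representative $g_i$ with $ac\equiv bd\equiv 0\pmod u$ but not mod $5u$ is not one of the matrices $B_jC_i$, so the classification has to be redone from scratch in this setting. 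Moreover, even granting a mod-$5$ classification, your concluding assertion that $\Upsilon_{5u}^{-1}g_i\Upsilon_{5u}$ ``agrees, up to an element of $\Gamma_0^*(N)$, with $\rho_1$ or $\rho_2$'' is exactly what needs proof: the natural candidates arising in the $\Upsilon_{5u}$-picture are conjugates of matrices built from $M$ (e.g.\ $\psmat{Mj+1}{-j}{-M}{1}$ with $Mj\equiv 2\pmod 5$), which are different matrices from $\rho_1=\Upsilon_5^{-1}B_jC_0\Upsilon_5$ built from $N/25$ (even the defining congruence for $j$ changes when $u^2\equiv -1\pmod 5$), and showing they coincide modulo $\Gamma_0^*(N)\cdot\Gamma'$ is a computation of essentially the same weight as the proposition itself. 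So as written the decisive verification is asserted, not carried out.

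The paper closes this case (in fact all $g_i$ at once, with no case split on divisibility by $5$) by a different and shorter idea, and it is worth contrasting. From $ac\equiv bd\equiv 0\pmod u$ and $(a,b)=(c,d)=1$ one gets $ab\equiv cd\equiv 0\pmod u$; then an explicit $2\times 2$ computation shows that $\gamma:=\Upsilon_{5u}^{-1}\psmat{a}{b}{c}{d}\Upsilon_{5u}$ conjugates both $\Gamma_0(N)$ and $w_{25}\Gamma_0(N)$ into $\mathrm{PSL}_2(\Z[\frac{1}{5}])$. Since $\gamma\in\mathcal{N}(\Gamma')$, these conjugates lie in $\Gamma'$, and the key observation $\langle\Gamma_0(N),w_{25}\rangle=\Gamma'\cap\mathrm{PSL}_2(\Z[\frac{1}{5}])$ then gives $\gamma\in\mathcal{N}(\langle\Gamma_0(N),w_{25}\rangle)$ directly --- no mod-$5$ coset analysis, and no appeal to Corollary \ref{cor3.6} for this inclusion. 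If you want to keep your route, you must actually perform the mod-$5$/mod-$u$ classification in the $\Upsilon_{5u}$-conjugated picture and then prove the identification of the resulting cosets with $\rho_1,\rho_2$; the $\Z[\frac{1}{5}]$ argument is the paper's way of avoiding exactly that obstacle.
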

\begin{proof}
By Corollary \ref{normalizer of 25 contained in normalizer of 25, u2,...,uk} we know that $\mathcal{N}(\langle \Gamma_0(N), w_{25})\subseteq \mathcal{N}(\langle \Gamma_0(N), w_{25}, w_{u_2^2}, \ldots, w_{u_k^2}\rangle)$. 
We now prove the other inclusion.
For simplicity of notation, we write $u:=\mathrm{lcm}(u_2,\ldots, u_k)$ and assume that $u>1$. 

Recall that $(5,u)=1$. By { Theorem \ref{MainThm}} we know that 
$\mathcal{N}(\langle \Gamma_0(N), w_{25},  w_{u_2^2}, \ldots, w_{u_k^2}\rangle) \subseteq \Upsilon_{5u}^{-1} \Gamma_0^*(M)\Upsilon_{5u},$
where $M:=\frac{N}{25u^2}$. As discussed in the beginning of \S \ref{section exact normalizer computation}, it suffices to compute the elements $\Upsilon_{5u}^{-1} \sigma \Upsilon_{5u}$ with $\sigma\in \Gamma_0(M)$ such that $\Upsilon_{5u}^{-1} \sigma \Upsilon_{5u}\in \mathcal{N}(\langle \Gamma_0(N), w_{25}, w_{u_2^2}, \ldots, w_{u_k^2}\rangle)$.
Let $\sigma:=\psmat{a}{b}{c}{d}\in \Gamma_0(M)$ such that $\Upsilon_{5u}^{-1} \sigma \Upsilon_{5u} \in \mathcal{N}(\langle \Gamma_0(N), w_{25}, w_{u_2^2}, \ldots, w_{u_k^2}\rangle)$. Note that for such $\sigma$, we have 
$$\sigma\in \Upsilon_{5u}\mathcal{N}(\langle\Gamma_0(N), w_{25}, w_{u_2^2}, \ldots, w_{u_k^2}\rangle)\Upsilon_{5u}^{-1}=\mathcal{N}(\langle \Gamma_{(5,u_1,\ldots, u_k)}, \Gamma_5, \Gamma_{u_2}, \ldots, \Gamma_{u_k}\rangle).$$
By Proposition \ref{ac and bd divisible by lcm/5 Theorem}, we know that $ac\equiv bd \equiv 0 \pmod u$.

\textbf{Claim}: We now prove that $\gamma:= \Upsilon_{5u}^{-1} \sigma\Upsilon_{5u}=\psmat{a}{\frac{b}{5u}}{5uc}{d}\in \mathcal{N}(\langle \Gamma_0(N), w_{25}\rangle).$ 

Since $(a,b)=(c,d)=1$, the condition $ac\equiv bd \equiv 0 \pmod u$ implies $ab\equiv cd \equiv 0 \pmod u$.

Since $\gamma\in \mathcal{N}(\langle \Gamma_0(N), w_{25}, w_{u_2^2}, \ldots, w_{u_k^2}\rangle)$, for any $\tilde{\gamma}\in \langle \Gamma_0(N), w_{25}\rangle$ we have
$\gamma \tilde{\gamma} \gamma^{-1}\in \langle \Gamma_0(N), w_{25}, w_{u_2^2}, \ldots, w_{u_k^2}\rangle$. Note that  $\alpha \in \langle \Gamma_0(N), w_{25}, w_{u_2^2}, \ldots, w_{u_k^2}\rangle \cap  \mathrm{PSL}_2(\Z[\frac{1}{5}])$ if and only if $\alpha \in \langle \Gamma_0(N), w_{25}\rangle$.
 We prove the claim by showing that $\gamma \tilde{\gamma} \gamma^{-1}\in  \mathrm{PSL}_2(\Z[\frac{1}{5}])$ for any $\tilde{\gamma}\in \langle \Gamma_0(N), w_{25}\rangle$.

First assume that $\gamma_1\in w_{25}\Gamma_0(N)$, i.e., $\gamma_1:=\psmat{5x}{\frac{y}{5}}{\frac{N}{5}z}{5w}$ with $x,y,x,w\in \Z$ such that $25xw-\frac{N}{25}yz=1$. 
Then
\begin{align*}
E_1[0,0]:=-acuy + 5adx - 5bcw + bdMuz, \ &
E_1[0,1]:=\frac{1}{5}a^2y - \frac{ab}{u}(x-w) - \frac{1}{5}Mb^2z,\\
E_1[1,0]:=-5c^2u^2y + 25cdux - 25cduw + 5Mu^2d^2z, \ &
E_1[1,1]:=acuy + 5adw - 5bcx - Mubdz, \ \mathrm{where} \ E_1:=\gamma \gamma_1 \gamma^{-1}.
\end{align*}
Since $ab \equiv 0 \pmod u$, we get $E_1\in  \mathrm{PSL}_2(\Z[\frac{1}{5}])$. Therefore $E_1\in \langle \Gamma_0(N), w_{25}, w_{u_2^2}, \ldots, w_{u_k^2}\rangle \cap  \mathrm{PSL}_2(\Z[\frac{1}{5}])$. Thus we obtain $E_1\in \langle \Gamma_0(N), w_{25}\rangle$.

Now consider $\gamma_2:=\psmat{x}{y}{Nz}{w}\in \Gamma_0(N)$ and let $E_2:=\gamma \gamma_2 \gamma^{-1}$. Then
\begin{align*}
E_2[0,0]:=-5acuy + adx - bcw + 5bdMuz, \ &
E_2[0,1]:=a^2y - \frac{1}{5}\frac{ab}{u}(x-w) - b^2Mz,\\
E_2[1,0]:=-25c^2u^2y + 5cdux - 5cduw + 25d^2Mu^2z, \ &
E_2[1,1]:=5acuy + adw - bcx - 5bdMuz.
\end{align*}
Since $ab \equiv 0 \pmod u$, we get $E_2\in  \mathrm{PSL}_2(\Z[\frac{1}{5}])$. Therefore $E_2\in \langle \Gamma_0(N), w_{25}, w_{u_2^2}, \ldots, w_{u_k^2}\rangle \cap  \mathrm{PSL}_2(\Z[\frac{1}{5}])$. Thus we obtain $E_2\in \langle \Gamma_0(N), w_{25}\rangle$.

Hence we conclude that $\gamma \langle \Gamma_0(N), w_{25}\rangle \gamma^{-1}\subseteq \langle \Gamma_0(N), w_{25}\rangle$, i.e., $\gamma\in \mathcal{N}(\langle \Gamma_0(N), w_{25}\rangle)$. Consequently, we obtain that
$\mathcal{N}(\langle \Gamma_0(N), w_{25}, w_{u_2^2}, \ldots, w_{u_k^2}\rangle) \subseteq \mathcal{N}(\langle \Gamma_0(N), w_{25}\rangle)$. 
The result follows.
\end{proof}
\subsection{Exact normalizer of $\langle \Gamma_0(N), W\rangle$ for arbitrary subgroup $W$}
Let $N\in \N$ such that $4,9\nmid N$ and $W$ be a subgroup generated by certain Atkin-Lehner involutions. Then we can find positive integers $u_1,u_2\ldots, u_k$ and $v_{k+1}, \ldots, v_n$ such that 
\begin{equation}\label{suitable representation of quotient group}
\langle \Gamma_0(N), W\rangle=\langle \Gamma_0(N), w_{u_1^2}, \ldots, w_{u_k^2}, w_{v_{k+1}}, \ldots, w_{v_n}\rangle,
\end{equation}
and for any Atkin-Lehner involution $w_d\in \langle \Gamma_0(N), w_{v_{k+1}}, \ldots, w_{v_n}\rangle$, $d$ is not a perfect square\footnote{This can be done as follows: let $H_1:=\langle \Gamma_0(N),W\rangle/\Gamma_0(N)$ and $H_2:=\langle \Gamma_0(N), w_{u^2}: w_{u^2}\in W\rangle/\Gamma_0(N)$. Then the generators of $H_2$ will give the $u_i$'s, and the non-trivial generators of $H_1/H_2$ will give the $v_j$'s.}.

It is well known that $\mathcal{N}(\langle \Gamma_0(N), W\rangle) \supseteq \Gamma_0^*(N).$ Moreover,
by Theorem \ref{MainThm}, we know that 
\begin{equation}\label{eq 5.50}
\mathcal{N}(\langle \Gamma_0(N), W\rangle)\leq \mathcal{N}(\langle \Gamma_0(N), w_{u_1^2}, \ldots, w_{u_k^2}\rangle).
\end{equation}
 If $w_{25} \notin W$, then $w_{25}\notin \langle \Gamma_0(N), w_{u_1^2}, \ldots, w_{u_k^2}\rangle$. Consequently, by Corollary \ref{exact normalizer for u gt 5} we have $\mathcal{N}(\langle \Gamma_0(N), w_{u_1^2}, \ldots, w_{u_k^2}\rangle)=\Gamma_0^*(N)$. Therefore using \eqref{eq 5.50} we conclude that $\mathcal{N}(\langle \Gamma_0(N), W\rangle) = \Gamma_0^*(N).$ Thus obtain the following theorem.
\begin{thm}\label{Complete normalizer without 25 theorem}
Let $N\in \N$ and $W$ be a subgroup generated by certain Atkin-Lehner involutions such that $4,9\nmid N$ and $w_{25}\notin W$. Then 
$\mathcal{N}(\langle \Gamma_0(N), W\rangle) = \Gamma_0^*(N).$
\end{thm}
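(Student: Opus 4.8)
The plan is to sandwich $\mathcal{N}(\langle\Gamma_0(N),W\rangle)$ between $\Gamma_0^*(N)$ and the normalizer of the subgroup generated by the \emph{square}-index Atkin-Lehner involutions of $W$, and then to invoke the explicit determination of the latter. First I would put $W$ into the normal form \eqref{suitable representation of quotient group}: pick exact divisors $u_1^2,\ldots,u_k^2$ of $N$ so that $\langle\Gamma_0(N),w_{u_1^2},\ldots,w_{u_k^2}\rangle/\Gamma_0(N)$ is exactly the subgroup of $W/\Gamma_0(N)$ generated by the Atkin-Lehner classes attached to perfect-square exact divisors, and then choose non-square $v_{k+1},\ldots,v_n$ completing a generating set, as in the footnote. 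The inclusion $\Gamma_0^*(N)\subseteq\mathcal{N}(\langle\Gamma_0(N),W\rangle)$ is standard: each $w_{d,N}$ with $d||N$ normalizes $\Gamma_0(N)$, and conjugation by it permutes the Atkin-Lehner involutions, hence stabilizes $\langle\Gamma_0(N),W\rangle$.

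For the reverse inclusion, the last assertion of Theorem \ref{MainThm} gives $\mathcal{N}(\langle\Gamma_0(N),W\rangle)\le\mathcal{N}(\langle\Gamma_0(N),w_{u_1^2},\ldots,w_{u_k^2}\rangle)$. The crucial point is that $w_{25}\notin\langle\Gamma_0(N),w_{u_1^2},\ldots,w_{u_k^2}\rangle$: modulo $\Gamma_0(N)$ this group is a subgroup of $W/\Gamma_0(N)$ (each $w_{u_i^2}$ lies in $W$ by construction), so membership of $w_{25}$ there would force $w_{25}\in W$, contradicting the hypothesis; if moreover $25\nmid N$ there is no $w_{25}$ at all and the point is vacuous. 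Hence Corollary \ref{exact normalizer for u gt 5} applies and yields $\mathcal{N}(\langle\Gamma_0(N),w_{u_1^2},\ldots,w_{u_k^2}\rangle)=\Gamma_0^*(N)$.

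Combining the two bounds gives $\Gamma_0^*(N)\subseteq\mathcal{N}(\langle\Gamma_0(N),W\rangle)\subseteq\Gamma_0^*(N)$, hence equality, and consequently $B(N)/W$ is the modular automorphism group of $X_0(N)/W$. All the real work sits in the inputs — the descent along square Atkin-Lehner involutions in Theorem \ref{MainThm} and the congruence computation behind Proposition \ref{Normalizer with 25 not in quotient} and Corollary \ref{exact normalizer for u gt 5} — so this last step is a short deduction. The only thing demanding genuine care is the bookkeeping that the square part of $W$ is precisely $\langle\Gamma_0(N),w_{u_1^2},\ldots,w_{u_k^2}\rangle$ and therefore cannot contain $w_{25}$ unless $w_{25}\in W$; this is exactly where, and the only way in which, the hypothesis $w_{25}\notin W$ is used beyond $4,9\nmid N$.
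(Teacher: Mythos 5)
Your argument is correct and follows essentially the same route as the paper: reduce to the normal form \eqref{suitable representation of quotient group}, use the last statement of Theorem \ref{MainThm} to bound the normalizer by $\mathcal{N}(\langle\Gamma_0(N),w_{u_1^2},\ldots,w_{u_k^2}\rangle)$, observe that $w_{25}\notin W$ forces $w_{25}\notin\langle\Gamma_0(N),w_{u_1^2},\ldots,w_{u_k^2}\rangle$, and conclude via Corollary \ref{exact normalizer for u gt 5} together with the standard inclusion $\Gamma_0^*(N)\subseteq\mathcal{N}(\langle\Gamma_0(N),W\rangle)$. Your extra remark justifying why the square part cannot contain $w_{25}$ is exactly the point the paper leaves implicit, so nothing is missing.
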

We now study the case where $w_{25}\in W$.   More precisely, we prove the following theorem.
\begin{thm}\label{Complete normalizer with 25 theorem}
Let $N\in \N$ and $W$ be a subgroup generated by certain Atkin-Lehner involutions such that $4,9\nmid N$ and $w_{25}\in W$.
\begin{enumerate}
\item If there exists $w_d\in W$ such that $\frac{d}{(25,d)}\not\equiv \pm 1 \pmod 5$, then $\mathcal{N}(\langle \Gamma_0(N), W\rangle) = \Gamma_0^*(N)$.
\item If $\frac{d}{(25,d)}\equiv \pm 1 \pmod 5$ for all $w_d\in W$, then $\mathcal{N}(\langle \Gamma_0(N), W\rangle) = \langle \Gamma_0^*(N), \Upsilon_5^{-1} B_jC_0\Upsilon_5, \Upsilon_5^{-1}B_0C_i\Upsilon_5\rangle$ where $B_j:=\psmat{\frac{N}{25} j+1}{-j}{-{\frac{N}{25}}}{1}$, $C_i:=\psmat{1}{i}{0}{1}$, $0\leq j,i \leq 4$ such that  ${\frac{N}{25}}j\equiv 2\pmod 5$ and $i+j\equiv 0 \pmod 5$. 
{ Moreover, the subgroup $\langle \Upsilon_5^{-1} B_jC_0\Upsilon_5= (\Upsilon_5^{-1}B_0C_i\Upsilon_5)^{-1} \rangle$ has order $3$ in $\mathcal{N}(\langle \Gamma_0(N), W\rangle)/\langle \Gamma_0(N), W\rangle$.}             
\end{enumerate}
\end{thm}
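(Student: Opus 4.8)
The plan is to assemble the statement from the exact normalizer computations already in hand; the decisive inputs are Theorem~\ref{MainThm}, Proposition~\ref{Theorem 5.11}, Corollary~\ref{cor3.6} and Lemma~\ref{lem3.9}. First I would fix a presentation as in \eqref{suitable representation of quotient group}, writing $\langle\Gamma_0(N),W\rangle = \langle\Gamma_0(N), w_{u_1^2},\dots,w_{u_k^2}, w_{v_{k+1}},\dots,w_{v_n}\rangle$ with every Atkin-Lehner involution $w_d\in\langle\Gamma_0(N),w_{v_{k+1}},\dots,w_{v_n}\rangle$ having $d$ non-square; since $w_{25}\in W$ (which forces $25\|N$) I arrange $u_1=5$ and $5\nmid u_i$ for $i\ge 2$. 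By \eqref{eq 5.50}, $\mathcal{N}(\langle\Gamma_0(N),W\rangle)\subseteq\mathcal{N}(\langle\Gamma_0(N),w_{u_1^2},\dots,w_{u_k^2}\rangle)$, and because $u_1=5$, Proposition~\ref{Theorem 5.11} and Corollary~\ref{cor3.6} identify the right-hand side with $\mathcal{N}(\langle\Gamma_0(N),w_{25}\rangle)=\langle\Gamma_0^*(N),\sigma,\sigma'\rangle$, where $\sigma:=\Upsilon_5^{-1}B_jC_0\Upsilon_5$ and $\sigma':=\Upsilon_5^{-1}B_0C_i\Upsilon_5$ with $i,j$ as in the statement. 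From Proposition~\ref{exact normalizer for 5 before conjugation} (conjugated by $\Upsilon_5$) I record: $\sigma^3\in\langle\Gamma_0(N),w_{25}\rangle$, $\sigma\sigma'\in\langle\Gamma_0(N),w_{25}\rangle$, and $\overline\sigma$ generates the quotient $\mathcal{N}(\langle\Gamma_0(N),w_{25}\rangle)/\Gamma_0^*(N)$, which is cyclic of order $3$; in particular $\langle\Gamma_0^*(N),\sigma,\sigma'\rangle=\langle\Gamma_0^*(N),\sigma\rangle$ and $[\langle\Gamma_0^*(N),\sigma\rangle:\Gamma_0^*(N)]=3$. This is the common upper bound for $\mathcal{N}(\langle\Gamma_0(N),W\rangle)$ in both cases.

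The lower bound $\Gamma_0^*(N)\subseteq\mathcal{N}(\langle\Gamma_0(N),W\rangle)$ is automatic, since conjugation by an Atkin-Lehner involution acts trivially on the abelian group $B(N)$. Hence $\Gamma_0^*(N)\le\mathcal{N}(\langle\Gamma_0(N),W\rangle)\le\langle\Gamma_0^*(N),\sigma\rangle$, and multiplicativity of indices together with $[\langle\Gamma_0^*(N),\sigma\rangle:\Gamma_0^*(N)]=3$ being prime forces $\mathcal{N}(\langle\Gamma_0(N),W\rangle)$ to equal either $\Gamma_0^*(N)$ or all of $\langle\Gamma_0^*(N),\sigma\rangle$. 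Thus everything reduces to deciding whether $\sigma\in\mathcal{N}(\langle\Gamma_0(N),W\rangle)$. Since $\sigma^3\in\langle\Gamma_0(N),w_{25}\rangle\subseteq\langle\Gamma_0(N),W\rangle$, it suffices to check $\sigma g\sigma^{-1}\in\langle\Gamma_0(N),W\rangle$ on the generators $g\in\{\Gamma_0(N),w_{25},w_{u_2^2},\dots,w_{u_k^2},w_{v_{k+1}},\dots,w_{v_n}\}$. For $g\in\Gamma_0(N)$ and $g=w_{25}$ this holds because $\sigma$ normalizes $\langle\Gamma_0(N),w_{25}\rangle\subseteq\langle\Gamma_0(N),W\rangle$; for the remaining generators $g=w_d\in W$, Lemma~\ref{lem3.9} says $\sigma w_d\sigma^{-1}\in\langle\Gamma_0(N),W\rangle$ if and only if $\tfrac{d}{(25,d)}\equiv\pm1\pmod5$. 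Note that for $d=u_i^2$ with $i\ge2$ one has $(25,d)=1$ and $u_i^2\equiv\pm1\pmod5$ automatically (the nonzero squares mod $5$ are $\pm1$), so only the non-square part of $W$ can obstruct.

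Now both cases follow. In case (2), every $w_d\in W$ satisfies $\tfrac{d}{(25,d)}\equiv\pm1\pmod5$ by hypothesis, so $\sigma$ (and likewise $\sigma'$) normalizes $\langle\Gamma_0(N),W\rangle$, giving $\mathcal{N}(\langle\Gamma_0(N),W\rangle)=\langle\Gamma_0^*(N),\sigma,\sigma'\rangle$. For the ``moreover'' part, $\sigma^3\in\langle\Gamma_0(N),w_{25}\rangle\subseteq\langle\Gamma_0(N),W\rangle$ while $\sigma,\sigma^2\in\mathrm{PSL}_2(\Z[\tfrac15])\setminus\mathrm{PSL}_2(\Z)$ lie outside $\langle\Gamma_0(N),W\rangle$, because $\langle\Gamma_0(N),W\rangle\cap\mathrm{PSL}_2(\Z[\tfrac15])=\langle\Gamma_0(N),w_{25}\rangle$: any element of $\langle\Gamma_0(N),W\rangle$ not in $\langle\Gamma_0(N),w_{25}\rangle$ carries an Atkin-Lehner factor $w_{u_i^2}$ with $1<u_i$ prime to $5$, or some $w_{v_j}$ with $v_j$ non-square, neither of which lies in $\mathrm{PSL}_2(\Z[\tfrac15])$, using that $B(N)$ is an elementary abelian $2$-group so the relevant denominators do not cancel; and $\sigma\notin\langle\Gamma_0(N),w_{25}\rangle$ since $\overline\sigma$ already has order $3$ modulo $\Gamma_0^*(N)$. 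Hence $\langle\overline\sigma\rangle$ has order exactly $3$ in $\mathcal{N}(\langle\Gamma_0(N),W\rangle)/\langle\Gamma_0(N),W\rangle$, with $\overline\sigma\,\overline{\sigma'}=1$. In case (1), choose $w_{d_0}\in W$ with $\tfrac{d_0}{(25,d_0)}\not\equiv\pm1\pmod5$; by Lemma~\ref{lem3.9}, $\sigma w_{d_0}\sigma^{-1}\notin\langle\Gamma_0(N),W\rangle$, so $\sigma\notin\mathcal{N}(\langle\Gamma_0(N),W\rangle)$, and the sandwiching argument forces $\mathcal{N}(\langle\Gamma_0(N),W\rangle)=\Gamma_0^*(N)$.

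The main obstacle: most of the real work — the upper bound and the $\tfrac{d}{(25,d)}\equiv\pm1\pmod5$ dichotomy — is already discharged by Theorem~\ref{MainThm}, Proposition~\ref{Theorem 5.11}, Corollary~\ref{cor3.6} and Lemma~\ref{lem3.9}, so the theorem is largely an assembly. The points that still demand care are: (i) choosing the presentation so that $u_1=5$ and observing that the square generators $w_{u_i^2}$ pass the $\pm1\pmod5$ test for free, so case (1) is genuinely driven by a non-square Atkin-Lehner element of $W$; (ii) the index-$3$ sandwiching that converts ``$\sigma$ in or not'' into a full determination of $\mathcal{N}(\langle\Gamma_0(N),W\rangle)$; and (iii) pinning down $\langle\Gamma_0(N),W\rangle\cap\mathrm{PSL}_2(\Z[\tfrac15])=\langle\Gamma_0(N),w_{25}\rangle$, which underlies the order-$3$ assertion.
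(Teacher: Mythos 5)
Your proposal is correct and takes essentially the same route as the paper: reduce via Theorem \ref{MainThm}, Proposition \ref{Theorem 5.11} and Corollary \ref{cor3.6} to the upper bound $\langle \Gamma_0^*(N),\Upsilon_5^{-1}B_jC_0\Upsilon_5,\Upsilon_5^{-1}B_0C_i\Upsilon_5\rangle$, then decide whether these order-$3$ elements normalize $\langle\Gamma_0(N),W\rangle$ using Lemma \ref{lem3.9}. Your explicit index-$3$ sandwiching and the identification $\langle\Gamma_0(N),W\rangle\cap\mathrm{PSL}_2(\Z[\tfrac{1}{5}])=\langle\Gamma_0(N),w_{25}\rangle$ simply spell out steps the paper compresses into ``consequently'' and ``it is easy to check''.
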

\begin{proof}
Recall that $\mathcal{N}(\langle \Gamma_0(N), W\rangle) \supseteq \Gamma_0^*(N).$ Since $w_{25}\in W$, without loss of generality in \eqref{suitable representation of quotient group} we can assume that $u_1=5$ and $5\nmid (\prod_{i^\prime=2}^k u_{i^\prime}\cdot \prod_{j^\prime=k+1}^n v_{j^\prime})$. By Proposition \ref{Theorem 5.11} and Corollary \ref{exact normalizer with only 5 square Atkin-Lehner} we get
\begin{equation}\label{eq 5.51}
\mathcal{N}(\langle \Gamma_0(N), w_{25}, w_{u_2^2}, \ldots, w_{u_k^2}\rangle) = \langle \Gamma_0^*(N), \Upsilon_5^{-1} B_jC_0\Upsilon_5, \Upsilon_5^{-1}B_0C_i\Upsilon_5\rangle,
\end{equation}
 where $B_j:=\psmat{\frac{N}{25} j+1}{-j}{-{\frac{N}{25}}}{1}$, $C_i:=\psmat{1}{i}{0}{1}$, $0\leq j,i \leq 4$ such that  ${\frac{N}{25}}j\equiv 2\pmod 5$ and $i+j\equiv 0 \pmod 5$. 

Let $\sigma \in \{\Upsilon_5^{-1} B_jC_0\Upsilon_5, \Upsilon_5^{-1}B_0C_i\Upsilon_5\}$. If there exists $w_d\in W$ such that $\frac{d}{(25,d)}\not\equiv \pm 1 \pmod 5$, then from Lemma \ref{The order 3 element commutes with Atkin-Lehner operators} we get $\sigma w_d \sigma^{-1}\notin \langle \Gamma_0(N), W\rangle$. Consequently from \eqref{eq 5.50} and \eqref{eq 5.51} we obtain $\mathcal{N}(\langle \Gamma_0(N), W\rangle) = \Gamma_0^*(N)$. This proves the first part.

On the other hand if $\frac{d}{(25,d)}\equiv \pm 1\pmod 5$ for all $w_d\in W$, then from Lemma \ref{The order 3 element commutes with Atkin-Lehner operators} we get $\sigma w_d \sigma^{-1}\in \langle \Gamma_0(N), W\rangle$ for all $w_d\in \langle \Gamma_0(N), W\rangle$. Thus $\sigma \in \mathcal{N}(\langle \Gamma_0(N), W\rangle)$. Moreover, it is easy to check that $\sigma$ has order $3$ in $\mathcal{N}(\langle \Gamma_0(N), W\rangle)/\langle \Gamma_0(N), W\rangle$ and $\Upsilon_5^{-1} B_jC_0\Upsilon_5, \Upsilon_5^{-1}B_0C_i\Upsilon_5$ are inverse of each other in $\mathcal{N}(\langle \Gamma_0(N), W\rangle)/\langle \Gamma_0(N), W\rangle$.
Now the second part follows from \eqref{eq 5.50} and \eqref{eq 5.51}.
\end{proof} 
    \section{On the modular { automorphisms} of order 3 of { $X_0(N)/W_N$}}

{ For $N\in \N$ and a subgroup $W_N\leq B(N)$, consider the quotient curve $X_0(N)/W_N$.
Clearly we have}
$$B(N)/W_N\leq \mathcal{N}(\Gamma_0(N)+W_N)/(\Gamma_0(N)+W_N)\leq \mathrm{Aut}(X_0(N)/W_N)$$
where $\mathcal{N}(\Gamma_0(N)+W_N)/(\Gamma_0(N)+W_N)$ is the modular { automorphism} group of $X_0(N)/W_N$.

Assuming $4,9\nmid N$, { by Theorem \ref{Complete normalizer without 25 theorem} and Theorem \ref{Complete normalizer with 25 theorem} we know that the modular automorphism group of $X_0(N)/W_N$ is exactly $B(N)/W_N$, except for the case $w_{25}\in W_N$ and $\frac{d}{(25,d)}\equiv \pm 1 \pmod 5$ for all $w_d\in W_N$; and in such situation we have
 $$\mathcal{N}(\Gamma_0(N)+W_N)/(\Gamma_0(N)+W_N)=\langle \Gamma_0^*(N), \Upsilon_5^{-1} B_jC_0\Upsilon_5, \Upsilon_5^{-1}B_0C_i\Upsilon_5\rangle/(\Gamma_0(N)+W_N),$$ where $0\leq j,i \leq 4$ such that  ${\frac{N}{25}}j\equiv 2\pmod 5$ and $i+j\equiv 0 \pmod 5$.}

 
 In this section we restrict to $N=25 M$ where $M$ is square-free, $(5,M)=1$, and a subgroup $W_N\leq B(N)$ of the form $\langle w_{25},w_{v_2},\ldots,w_{v_n}\rangle$ with $v_l||M$ and $v_l\equiv\pm1 \pmod 5$ for all $l\in \{2,\ldots,n\}$. 
Consider the order $3$ element $\sigma_{M}:= \Upsilon_5^{-1}B_j C_0\Upsilon_5$ in $\mathcal{N}(\Gamma_0(25M)+W_{25M})/(\Gamma_0(25M)+W_{25M})$.

\begin{lema} Under the assumptions and notations in this section, $\mathcal{N}(\Gamma_0(25M)+W_{25M})/(\Gamma_0(25M)+W_{25M})=\langle B(25M)/W_{25M},\sigma_M\rangle$ and $\sigma_M$ has order 3. Then $\sigma_M$ is defined over $\mathbb{Q}(\sqrt{5})$ (as an automorphism of $X_0(N)/W_N$), in particular $\mathrm{Aut}(X_0(N)/W_N)=\mathrm{Aut}_{\mathbb{Q}(\sqrt{5})}(X_0(N)/W_N)$ (where $\mathrm{Aut}_K(X)$ denotes the group of all automorphisms of $X$ defined over the field $K$).
	\end{lema}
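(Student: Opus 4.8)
The plan is to show that $\sigma_M$ is defined over $\mathbb{Q}(\sqrt 5)$ by first confining its field of definition inside $\mathbb{Q}(\zeta_5)$ and then computing the action of $\mathrm{Gal}(\mathbb{Q}(\zeta_5)/\mathbb{Q})$ on it; the equality of automorphism groups will then drop out, because the only other modular automorphisms are the Atkin--Lehner involutions and their products, which are classically defined over $\mathbb{Q}$. Throughout I would use the canonical $\mathbb{Q}$-model of $X:=X_0(25M)/W_{25M}$, i.e.\ the one whose function field is the field of $W_{25M}$-invariant modular functions for $\Gamma_0(25M)$ with $q$-expansion at $\infty$ in $\mathbb{Q}((q^{1/h}))$; on this model $\mathrm{Gal}(\overline{\mathbb{Q}}/\mathbb{Q})$ acts through the $q$-coefficients, and an automorphism of $X_{\overline{\mathbb{Q}}}$ is defined over a subfield $K$ of $\overline{\mathbb{Q}}$ exactly when the induced substitution on $q$-expansions commutes with $\mathrm{Gal}(\overline{\mathbb{Q}}/K)$ (this is the $q$-expansion principle, a concrete avatar of Shimura's reciprocity law).

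The first real step is to compute the $q$-expansion of $f\circ\sigma_M$ for $f\in\mathbb{Q}(X)$. Writing $\sigma_M=\Upsilon_5^{-1}B_jC_0\Upsilon_5$ and conjugating by $\Upsilon_5=\psmat{1}{0}{0}{1/5}$ transports the situation to $\mathcal{H}^*/\tilde{\Gamma}$ with $\tilde{\Gamma}=\Upsilon_5\langle\Gamma_0(25M),W_{25M}\rangle\Upsilon_5^{-1}$, on which $\sigma_M$ acts through the integral matrix $B_jC_0\in\mathrm{SL}_2(\mathbb{Z})$; one then performs the cusp normalisation needed to read off the expansion at $\infty$ (move $\sigma_M(\infty)=\tfrac{(N/25)j+1}{-5(N/25)}$ to $\infty$ by an element of $\mathrm{SL}_2(\mathbb{Z})$ and rescale by the appropriate width). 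Because $25\| N$ and because the denominators occurring are divisible by $5$ but not by $25$ in the relevant place, this introduces exactly one primitive fifth root of unity $\zeta_5$ and no other irrationality, so the coefficients of $f\circ\sigma_M$ lie in $\mathbb{Q}(\zeta_5)$ whenever those of $f$ do. Hence $\sigma_M$ is defined over $\mathbb{Q}(\zeta_5)$, so $\mathrm{Gal}(\overline{\mathbb{Q}}/\mathbb{Q}(\zeta_5))$ fixes $\sigma_M$ and, for $\tau\in\mathrm{Gal}(\overline{\mathbb{Q}}/\mathbb{Q})$, the conjugate ${}^{\tau}\sigma_M$ depends only on the image $c(\tau)\in(\mathbb{Z}/5)^{\times}$ of $\tau$ under the cyclotomic character modulo $5$.

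Next I would identify this dependence. By the previous step the effect of $\tau$ is to replace, in the factorisation of $\sigma_M$ through $\Upsilon_5$, the translation-by-$i/5$ piece $\Upsilon_5^{-1}C_i\Upsilon_5\colon q\mapsto\zeta_5^{\,i}q$ by $q\mapsto\zeta_5^{\,c(\tau)i}q$, i.e.\ $C_i$ by $C_{c(\tau)i}$ (and likewise the $B_j$-part), which is exactly Shimura's reciprocity in our situation. Reducing the resulting matrix modulo $\langle\Gamma_0(25M),W_{25M}\rangle$, and using the defining congruences $\tfrac{N}{25}j\equiv 2$ and $i\equiv -j\pmod 5$ from Theorem \ref{Complete normalizer with 25 theorem}, one checks that ${}^{\tau}\sigma_M\in\{\sigma_M,\sigma_M^{-1}\}$, with ${}^{\tau}\sigma_M=\sigma_M$ precisely when $c(\tau)$ is a square in $(\mathbb{Z}/5)^{\times}$. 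In particular $\langle\sigma_M\rangle$ is $\mathrm{Gal}(\overline{\mathbb{Q}}/\mathbb{Q})$-stable and the character $\mathrm{Gal}(\overline{\mathbb{Q}}/\mathbb{Q})\to\mathrm{Aut}(\langle\sigma_M\rangle)\cong\{\pm1\}$ describing the Galois action is the quadratic character of $(\mathbb{Z}/5)^{\times}$, whose fixed field is $\mathbb{Q}(\zeta_5)^{+}=\mathbb{Q}(\sqrt 5)$; thus $\sigma_M$ is defined over $\mathbb{Q}(\sqrt 5)$ (and over no smaller field, the character being nontrivial). Since $B(25M)/W_{25M}$ is generated by Atkin--Lehner involutions, all of which are defined over $\mathbb{Q}$, the full modular automorphism group $\langle B(25M)/W_{25M},\sigma_M\rangle$ is defined over $\mathbb{Q}(\sqrt 5)$; granting, as in the setting of this section, that every automorphism of $X_{\overline{\mathbb{Q}}}$ is modular, this gives $\mathrm{Aut}(X_0(N)/W_N)=\mathrm{Aut}_{\mathbb{Q}(\sqrt 5)}(X_0(N)/W_N)$.

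The main obstacle I expect is the bookkeeping in the second step: verifying that only a single $\zeta_5$ (in particular never a $\zeta_{25}$) enters the $q$-expansion of $f\circ\sigma_M$ after the cusp normalisation, and then, in the third step, reducing the Galois-conjugated matrix modulo $\langle\Gamma_0(25M),W_{25M}\rangle$ to a power of $\sigma_M$ and extracting the square/nonsquare dichotomy. Once this explicit computation is in place, the remaining ingredients — $\mathbb{Q}$-rationality of the Atkin--Lehner involutions, and the resulting $\mathrm{Gal}(\overline{\mathbb{Q}}/\mathbb{Q})$-stability of $\langle\sigma_M\rangle$ — are standard, and the conclusion follows.
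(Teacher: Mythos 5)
There is a genuine gap, and it sits exactly at the final claim $\mathrm{Aut}(X_0(N)/W_N)=\mathrm{Aut}_{\mathbb{Q}(\sqrt 5)}(X_0(N)/W_N)$. Your argument, via $q$-expansions and Shimura reciprocity, only ever controls the \emph{modular} automorphisms (those coming from $\mathcal{N}(\Gamma_0(25M)+W_{25M})$), and you close the argument by ``granting, as in the setting of this section, that every automorphism of $X_{\overline{\mathbb{Q}}}$ is modular.'' That is not an assumption of this section, and it is precisely the statement the paper does \emph{not} claim in general: the equality of the modular and the full automorphism group is only expected for large level and is proved elsewhere only in special cases (e.g.\ the corollary in the introduction needs $N\geq 10^{400}$ and the results of Kenku--Momose/Dose--Lido--Mercuri). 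The paper's proof avoids this entirely by bounding the field of definition of \emph{arbitrary} automorphisms: any automorphism of $X_0(N)/W_N$ acts on its Jacobian, and by Kenku--Momose (Proposition 1.3 and Lemma 1.5 of that paper) every such automorphism is defined over the compositum of the quadratic fields of discriminant $D$ with $D^2\mid N$; for $N=25M$ with $M$ squarefree and $(5,M)=1$ this compositum is $\mathbb{Q}(\sqrt 5)$, which simultaneously gives that $\sigma_M$ is defined over $\mathbb{Q}(\sqrt 5)$ and that $\mathrm{Aut}=\mathrm{Aut}_{\mathbb{Q}(\sqrt 5)}$. (The paper also checks that $\sigma_M(\infty)$ is not a rational cusp, so $\sigma_M$ is not defined over $\mathbb{Q}$.) Without an input of this kind about non-modular automorphisms, your conclusion about the full automorphism group does not follow.

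Concerning the part you do address, the strategy (confine $\sigma_M$ to $\mathbb{Q}(\zeta_5)$ via the rational cusp-$\infty$ model, then compute the $\mathrm{Gal}(\mathbb{Q}(\zeta_5)/\mathbb{Q})$-action and identify its kernel with the squares in $(\mathbb{Z}/5)^{\times}$, whose fixed field is $\mathbb{Q}(\sqrt 5)$) is plausible and consistent with the paper's conclusion, but the two computations you yourself flag as the main obstacles --- that only $\zeta_5$ (never $\zeta_{25}$) enters the normalized expansions, and that the Galois conjugate of $\sigma_M$ for a nonsquare value of the cyclotomic character reduces to $\sigma_M^{-1}$ modulo $\langle\Gamma_0(25M),W_{25M}\rangle$ --- are left unverified, so even the first assertion of the lemma is only sketched in your proposal, whereas the paper obtains it in one line from the Jacobian argument.
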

\begin{proof} 
Note that the elements of $\textrm{Aut}(X_0(N)/W_N)$ can be thought of as automorphisms on the Jacobian variety of $X_0(N)/W_N$. 
Let $\infty$ be the cusp at infinity of $X_0(N)/W_N$. Then it is easy to check that $\sigma_M(\infty)$ is not a rational cusp (cf. \cite{Ogg} for the field of definition of the cusps). Therefore $\sigma_M$ is not defined over $\Q$. Now the result follows from the fact that any automorphism of the Jacobian is defined over the compositum of the quadratic fields with discriminant $D$ whose square divides $N$ (cf. \cite[Proposition 1.3, Lemma 1.5]{KeMo}).
	\end{proof}


\begin{remark}  Assume $p\equiv 1 \pmod 4$ is a prime, $M$ is a square-free positive integer coprime with $p$, and $W_{p^2M}=\langle w_{p^2}, w_{v_2},\ldots,w_{v_n}\rangle$ with $v_l||M$. 
	Then, by \cite[Lemma 1.5]{KeMo} and \cite[Proposition 1.3]{KeMo} any automorphism of $X_0(p^2M)/W_{p^2M}$ is defined either over $\mathbb{Q}$ or over $\mathbb{Q}(\sqrt{p})$ (the same conclusion is true if $p\equiv 3 \pmod 4$ and the Jacobian variety of $X_0({p^2M})/W_{p^2M}$ does not contain any subvariety with complex multiplication). For a prime $\ell\nmid {p^2M}$ we can reduce the curve $X_0({p^2M})/W_{p^2M}$ modulo $\ell$, and denote such curve by $\mathcal{X}_0({p^2M})/W_{p^2M}\otimes\mathbb{F}_{\ell}$. Then we have an injection
	$$\mathrm{Aut}(X_0({p^2M})/W_{p^2M})\hookrightarrow \mathrm{Aut}_{\mathbb{F}_{\ell^2}}(\mathcal{X}_0({p^2M})/W_{p^2M}\otimes\mathbb{F}_{\ell}),$$
	and using Magma in many cases (with small genus) we can compute the automorphism group over the finite field $\mathbb{F}_{\ell^2}$, via the instruction
	\begin{verbatim}
	Automorphisms(ChangeRing(X0NQuotient(p^2*M,[p^2,v_2,\ldots,v_n]),GF(\ell^2))).
	\end{verbatim}
	Consequently, we have an upper bound for the order of the automorphism group, and a lower bound is given by the order of the modular automorphism group. For example, using Magma we obtain $|\mathrm{Aut}_{\F_4}(X_0(275)/\langle w_{25}\rangle)|=6$, consequently we get $|\mathrm{Aut}(X_0(275)/\langle w_{25})\rangle|\leq 6$. Furthermore by Theorem \ref{Complete normalizer with 25 theorem}, we have $|\mathrm{Aut}(X_0(275)/\langle w_{25})\rangle|\geq 6$. Therefore we conclude that $|\mathrm{Aut}(X_0(275)/\langle w_{25})\rangle|= 6=|\mathcal{N}(\Gamma_0(275)+\langle w_{25}\rangle)/(\Gamma_0(N)+\langle w_{25}\rangle)|$, i.e., $\mathrm{Aut}(X_0(275)/\langle w_{25})\rangle=\mathcal{N}(\Gamma_0(275)+\langle w_{25}\rangle)/(\Gamma_0(275)+\langle w_{25}\rangle)$.
\end{remark}

Now consider $\sigma_M$ as an element of $X_0(25M)/\langle w_{25}\rangle$. We now give a theoretical explanation of the fact that for positive integers $v_2,\ldots,v_n$ with $v_l||M$ and $v_l\equiv \pm 1 \pmod 5$, $\sigma_M$ induces an automorphism of order 3 on $X_0(25 M)/\langle w_{25},w_{v_2},\ldots,w_{v_n}\rangle$.


We write the $\mathbb{Q}$-decomposition of the Jacobian of $X_0(25M)/\langle w_{25}\rangle$ by:
\begin{equation}\label{eq 4.1 new}
\mathrm{Jac}(X_0(25 M)/\langle w_{25}\rangle)\sim_{\mathbb{Q}} \prod_{m=1}^s A_{f_m}^{n_m},
\end{equation}
where $f_m$ is a newform of level $N_m$ (with $N_m|25M$) such that $w_{25}$ acts as $+1$ on $f_m$ if $25|N_m$.
Since $\mathrm{Aut}(X_0(25 M)/\langle w_{25}\rangle)$ has an automorphism defined over $\mathbb{Q}(\sqrt{5})$ but not over $\mathbb{Q}$, there exist { $f_{l_1},f_{l_2}$} (in \eqref{eq 4.1 new}) such that $A_{f_{l_1}}\sim_{\mathbb{Q}(\sqrt{5})}A_{f_{l_2}}$ with $f_{l_2}=f_{l_1}\otimes\chi_5$ (where $\chi_5$ is the quadratic Dirichlet character associated to $\Q(\sqrt{5})$).
  It is well-known
that the abelian variety $A_f$ is simple over $\overline{\Q}$ if and only if $f$ does not have any inner twist,
i.e. there is no quadratic Dirichlet character $\chi$ such that $f\otimes\chi$ is a Galois conjugated of
$f$. This condition amounts to say that $\mathrm{End}_{\Q}(A_f) = \mathrm{End}_{\overline{\Q}}(A_f )$.
Assume that $f_{l_1}$ and $f_{l_2}$ (appearing in \eqref{eq 4.1 new}) do not have any inner twist (in particular $l_1\ne l_2$). If $A_{f_{l_1}}$ and $A_{f_{l_2}}$ are isogenous over $\overline{\Q}$
but not over $\mathbb{Q}$, then there exists a Dirichlet character $\chi$ such that $A_{f_{l_2}} = A_{f_{l_1}}\otimes \chi$ (cf. \cite[Proposition 4.2]{GJU12}), and if
$\chi$ is the quadratic Dirichlet character attached to the quadratic number field $\Q(\sqrt{5})$, then there is an isogeny (defined over $\Q(\sqrt{5})$) between the abelian varieties $A_{f_{l_2}}$ and $A_{f_{l_1}}$. 

Therefore assuming that all the $f_m$'s appearing in \eqref{eq 4.1 new} have no inner twist, we have that the modular automorphisms of order 3 in Corollary \ref{cor3.6} are coming from matrices (acting on the canonical model obtained using the cusp forms appearing in \eqref{eq 4.1 new}) defined over $\mathbb{Q}(\sqrt{5})$ (such matrices consist of blocks corresponding to the $\mathbb{Q}(\sqrt{5})$-isogeny factors $A_{f_{l_1}}^{n_{l_1}}\times A_{f_{l_2}}^{n_{l_2}}\sim_{\Q(\sqrt{5})} A_{f_{l_1}}^{n_{l_1}+n_{l_2}}$ where $f_{l_2}=f_{l_1}\otimes \chi_5$ and $f_{l_1}\neq f_{l_2}$). 

In order that the modular automorphism $\sigma_M$ of order 3 of $X_0(25 M)/\langle w_{25}\rangle$ descends to an order 3 automorphism of $X_0(25 M)/\langle w_{25},w_{v_2},\ldots,w_{v_n}\rangle$, a sufficient condition is that for any quadratic twist $f_{l_1}\otimes\chi_5=f_{l_2}$ (in \eqref{eq 4.1 new}) where the action of $\sigma_M$ is non-trivial on the $\Q(\sqrt{5})$-isogeny factor $A_{f_{l_1}}^{n_{l_1}+n_{l_2}}$, the Atkin-Lehner involution $w_{v_l}$ should act with the same sign on $A_{f_{l_1}}$ and  $A_{f_{l_1}}$ (cf. \cite[Lemma 18]{BaTa24} for more detail). 

We recall the following result of Atkin-Lehner in \cite[p.156]{AtLe} concerning quadratic twists:
\begin{lema}\label{lemma 4.3} 
Let $p$ be a prime, $M^\prime\in \N$ with $(p,M^\prime)=1$, and $\chi_p$ be the quadratic Dirichlet character associated to $\Q(\sqrt{p})$. If $f$ is a newform for $\Gamma_0(M^\prime)$ or $\Gamma_0(pM^\prime)$, then $f\otimes \chi_p$ is a newform for $\Gamma_0(p^2M^\prime)$. Furthermore 
	\begin{itemize}
		\item for $d||M^\prime$ we have $f\otimes\chi_p|{w_d}=\left(\frac{d}{p}\right) \epsilon_d(f) (f\otimes\chi_p)$, where $f|w_d= \epsilon_d(f) f$, and $\left(\frac{d}{p}\right)$ denotes the Kronecker symbol.
		\item $f\otimes\chi_p|{w_{p^2}}=\left(\frac{-1}{p}\right) f\otimes\chi_p$
	\end{itemize} 
If $f$ is a newform for $\Gamma_0(p^2M^\prime)$ and $f\otimes\chi_p$ is also a newform for $\Gamma_0(p^2M^\prime)$, then for any $d||M^\prime$ we have  $f\otimes\chi_p|{w_d}=\left(\frac{d}{p}\right) \epsilon_d(f) f\otimes\chi_p$.
\end{lema}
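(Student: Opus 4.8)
The plan is to deduce this known statement of Atkin--Lehner (revisited by Atkin--Li) by writing the twist $f\otimes\chi_p$ as an explicit finite combination of translates of $f$, and then tracking, matrix by matrix, how the Atkin--Lehner operators permute and rescale those translates. Throughout, $\chi:=\chi_p=\kro{\cdot}{p}$ denotes the real primitive character modulo $p$, with Gauss sum $\tau$ satisfying $\tau^2=\chi(-1)p$, and I set $T_v:=\psmat{1}{v/p}{0}{1}$ for $v\in\Z/p\Z$; I write $|_k$ for the weight-$k$ slash normalised so that scalar matrices act trivially and $(f|_kA)|_kB=f|_k(AB)$. First I would record the twisting identity
\begin{equation*}
f\otimes\chi \;=\; \tau^{-1}\sum_{v\bmod p}\chi(v)\,\bigl(f|_k T_v\bigr),
\end{equation*}
which is immediate from $\chi(n)=\tau^{-1}\sum_{v\bmod p}\chi(v)e^{2\pi i nv/p}$. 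For the newform assertion I would appeal to the standard theory of twists: if $f$ has level $M^\prime$ (coprime to the conductor $p$ of $\chi$) the twist is a newform of level $M^\prime p^2$, and if $f$ has level $pM^\prime$ then $\pi_{f,p}$ is Steinberg up to an unramified twist, so $\pi_{f,p}\otimes\chi_p$ has conductor exponent $2$ at $p$, again giving a newform of level $p^2M^\prime$.

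Next I would compute the eigenvalue under $w_d$ for $d||M^\prime$. Let $L\in\{M^\prime,pM^\prime\}$ be the level of $f$. I would pick a representative $W:=w_{d,p^2M^\prime}=\psmat{dx}{y}{p^2M^\prime z}{dw}$, so that $d^2xw-p^2M^\prime yz=d$ and in particular $p\nmid w$, and a representative $w_{d,L}$ with $w_{d,L}^{-1}W\in\Gamma_0(L)$ — which is possible since $d\mid M^\prime\mid L\mid p^2M^\prime$ — whence $f|_kW=\epsilon_d(f)\,f$. A direct matrix computation then gives $W^{-1}T_vW=\gamma_v\,T_{v^\prime}$ with $\gamma_v\in\Gamma_0(p^2M^\prime)$ and $v^\prime\equiv d\,w^2 v\pmod p$; since $\Gamma_0(p^2M^\prime)\subseteq\Gamma_0(L)$, the factor $\gamma_v$ acts trivially on $f$, so $f|_k(T_vW)=\epsilon_d(f)\,(f|_kT_{v^\prime})$. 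Re-indexing the sum by $v^\prime$ and using $\chi(v)=\chi(d)\chi(w)^2\chi(v^\prime)=\kro{d}{p}\chi(v^\prime)$ (as $\chi$ is real and $\chi(w)^2=1$) yields $(f\otimes\chi)|_k w_{d,p^2M^\prime}=\kro{d}{p}\epsilon_d(f)\,(f\otimes\chi)$. When $f$ is itself a newform of level $p^2M^\prime$ the identical computation applies, now with $f|_kW=\epsilon_d(f)f$ by definition of $\epsilon_d(f)$ and $f|_k\gamma_v=f$ since $\gamma_v\in\Gamma_0(p^2M^\prime)$, which proves the last sentence of the lemma.

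For $w_{p^2}$ I would take a representative $V:=\psmat{p^2 a}{b}{p^2M^\prime c}{p^2 d^\prime}$ with $\det V=p^2$ and $p^2ad^\prime-M^\prime bc=1$, so that $M^\prime bc\equiv-1\pmod{p^2}$ and in particular $p\nmid bc$; recall $w_{p^2}=\tfrac1pV$ and the slash is scale-invariant. A computation shows $T_vV=\psmat{p}{0}{0}{p}\,\gamma_v\,T_{v^{\prime\prime}}$ with $\gamma_v\in\Gamma_0(L)$ and $v^{\prime\prime}\equiv b\,\overline{M^\prime c}\,\overline{v}\pmod p$; since $\psmat{p}{0}{0}{p}$ acts trivially and $\gamma_v$ acts trivially on $f$, one gets $f|_k(T_vV)=f|_kT_{v^{\prime\prime}}$. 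Re-indexing by $v^{\prime\prime}$ and using $\chi(v)=\chi\!\bigl(b\,\overline{M^\prime c}\bigr)\chi(v^{\prime\prime})$, then $\chi\!\bigl(b\,\overline{M^\prime c}\bigr)=\chi(M^\prime bc)=\chi(-1)$, yields $(f\otimes\chi)|_k w_{p^2}=\chi(-1)\,(f\otimes\chi)=\kro{-1}{p}\,(f\otimes\chi)$.

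I expect the hard part to be the bookkeeping in these matrix identities: verifying that each conjugating matrix $\gamma_v$ genuinely lands in $\Gamma_0$ of the stated level — which is exactly what makes it act trivially on $f$ — and that the rescaled indices $v^\prime$ and $v^{\prime\prime}$ differ from $v$ by precisely the asserted units modulo $p$. This requires choosing the representatives of $w_d$ and $w_{p^2}$ compatibly and carefully tracking the divisibilities by $p$, $M^\prime$ and $d$; it is elementary but error-prone. If one prefers a self-contained argument over citing the twisting theorem, the only extra work is the local conductor computation at $p$ showing $f\otimes\chi_p$ has exact level $p^2M^\prime$.
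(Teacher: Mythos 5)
The paper does not prove this lemma at all: it is quoted verbatim from Atkin--Lehner (\cite[p.156]{AtLe}), so there is no internal proof to compare against. Your proposal is essentially the classical argument that underlies that citation: expand $f\otimes\chi_p$ via the Gauss sum as $\tau^{-1}\sum_v\chi_p(v)\,f|T_v$, then conjugate the translations $T_v$ past a chosen Atkin--Lehner representative and re-index. I checked the key matrix identities and they are correct: with $W=\psmat{dx}{y}{p^2M'z}{dw}$ one gets $W^{-1}T_vW\,T_{v'}^{-1}\in\Gamma_0(p^2M')$ exactly when $v'\equiv dw^2v\pmod p$ (the lower-left entry is $-p^3M'(M'/d)z^2v$, and $p\nmid w,x$ follows from $dxw\equiv 1\pmod p$), giving the factor $\kro{d}{p}$; and for $V$ representing $w_{p^2}$ one gets $\tfrac1p\,T_vV\,T_{v''}^{-1}\in\Gamma_0(L)$ when $v''\equiv b\,\overline{M'c}\,\overline{v}\pmod p$, and $M'bc\equiv-1\pmod{p^2}$ produces $\kro{-1}{p}$; the observation that $W$ is simultaneously a legitimate representative of $w_{d,L}$ and of $w_{d,p^2M'}$ is what makes $\epsilon_d(f)$ appear at the original level, and the same computation with $L=p^2M'$ gives the final sentence of the lemma. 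Two caveats worth flagging, neither fatal: (i) your Gauss-sum setup tacitly takes $\chi_p$ to be the conductor-$p$ Legendre symbol, which agrees with ``the character of $\Q(\sqrt p)$'' only for $p\equiv 1\pmod 4$ (the paper only uses $p=5$, and Atkin--Lehner's original statement is for the Legendre symbol, so this matches the intended content, but for $p\equiv 3\pmod 4$ or $p=2$ the stated level $p^2M'$ would be wrong); (ii) the ``newform of level $p^2M'$'' assertion, which you handle by citing the twisting theorem plus a local conductor computation in the level-$pM'$ (Steinberg) case, still needs the small standard step identifying $f\otimes\chi_p$ with the newform of that exact conductor --- this is in Atkin--Li and is fine to cite, but it is not purely the matrix bookkeeping.
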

We recall that (cf. \cite{Ogg}) if $f_{m_1}\otimes\chi_p=f_{m_2}$ where $f_{m_1}$ and $f_{m_2}$ are newforms of levels $M_1$ and $M_2$ respectively, both dividing $p^2 M$ with $(M,p)=1$, then $M_1=p^2M^\prime$ or $M_2=p^2M^\prime$ for a natural number $M^\prime|M$. Therefore, the level of the other quadratic twisted modular form involved is $M^\prime$, $pM^\prime$ or $p^2M^\prime$.

\begin{cor}\label{cor4.4}
	Consider $N= 5^2 M$, where $M$ is a square-free positive integer with $(5,M)=1$. Assume that the Jacobian of $X_0(N)/\langle w_{25}\rangle$ has no inner twist, and for each quadratic twist $f_{m_1}\otimes\chi_5=f_{m_2}$ with ${m_1}\neq {m_2}$ (where $A_{f_{m_1}}$ and $A_{f_{m_2}}$ are distinct $\Q$-isogeny factors of $\mathrm{Jac}(X_0(N)/\langle w_{25}\rangle)$) the conductor of $f_{m_1}$ or $f_{m_2}$ is equal to $N$. Then, an Atkin-Lehner involution $w_d$ (with $d||M$) acts exactly by the same sign on $A_{f_{m_1}}$ and $A_{f_{m_2}}$ iff $\left(\frac{d}{5}\right)=1$ iff $d\equiv \pm 1(\mod 5)$.
\end{cor}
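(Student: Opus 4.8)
The plan is to deduce the statement directly from Lemma~\ref{lemma 4.3} together with the elementary description of the squares modulo $5$; no ingredient beyond these is needed. First I would fix a quadratic twist $f_{m_1}\otimes\chi_5=f_{m_2}$ with $m_1\neq m_2$ occurring among the $\Q$-isogeny factors of $\mathrm{Jac}(X_0(N)/\langle w_{25}\rangle)$, and let $N_1,N_2$ denote the conductors of $f_{m_1},f_{m_2}$. By hypothesis one of $N_1,N_2$ equals $N=5^2M$; by the discussion preceding the corollary (via \cite{Ogg}) the two levels appearing in such a twist are of the form $M'$, $5M'$, $5^2M'$ for a common $M'\mid M$, so necessarily $M'=M$ and the level distinct from $5^2M$ lies in $\{M,5M,5^2M\}$. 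Since $d||M$ and $(5,M)=1$, one checks $d||N_i$ for $i=1,2$, so the Atkin--Lehner eigenvalues $\epsilon_d(f_{m_i})\in\{\pm1\}$ — the signs by which $w_d$ acts on $A_{f_{m_i}}$ — are well defined.

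Next I would run through the few configurations permitted by Lemma~\ref{lemma 4.3}. If one of the two newforms, say $f$, is a newform for $\Gamma_0(M)$ or $\Gamma_0(5M)$ and the other equals $f\otimes\chi_5$ of level $5^2M$, the first bullet of Lemma~\ref{lemma 4.3} gives $(f\otimes\chi_5)\mid w_d=\left(\frac{d}{5}\right)\epsilon_d(f)\,(f\otimes\chi_5)$; if instead both $f_{m_1}$ and $f_{m_2}$ are newforms of level $5^2M$, the final assertion of Lemma~\ref{lemma 4.3} gives the same relation. In either case (and in the symmetric one obtained by interchanging $m_1$ and $m_2$, using $\chi_5^2=1$) we obtain $\epsilon_d(f_{m_2})=\left(\frac{d}{5}\right)\epsilon_d(f_{m_1})$, so $w_d$ acts by the same sign on $A_{f_{m_1}}$ and $A_{f_{m_2}}$ if and only if $\left(\frac{d}{5}\right)=1$.

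Finally I would note that, since $(d,5)=1$, the Kronecker symbol $\left(\frac{d}{5}\right)$ equals $1$ exactly when $d$ is a nonzero square modulo $5$, and the nonzero squares modulo $5$ are precisely $1$ and $4$; hence $\left(\frac{d}{5}\right)=1$ if and only if $d\equiv\pm1\pmod 5$, which gives the displayed chain of equivalences. The no-inner-twist hypothesis enters only to guarantee that the relevant $\mathbb{Q}(\sqrt5)$-isogenies between distinct factors of the Jacobian are genuinely induced by twisting by $\chi_5$ (so that the conductor hypothesis is the pertinent one) and that the factors $A_{f_m}$ behave as the expected simple building blocks; it plays no role in the computation itself.

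The argument is essentially bookkeeping, so I do not expect a substantial obstacle. The one point that requires a little care is matching each twisted pair to the correct part of Lemma~\ref{lemma 4.3}: checking that pairs in which neither conductor is $N$ are excluded by the corollary's hypothesis, and making sure that the case where \emph{both} twisted newforms already have level $5^2M$ is genuinely covered by the final sentence of that lemma rather than by its first bullet.
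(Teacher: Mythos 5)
Your argument is correct and is exactly the route the paper intends: the corollary is stated without a written proof because it is an immediate consequence of Lemma \ref{lemma 4.3} (including its final clause for the case of two level-$N$ newforms) together with the preceding remark via \cite{Ogg} on the possible levels, plus the observation that the nonzero squares mod $5$ are $\pm 1$. Your bookkeeping of the level configurations and the symmetric case via $\chi_5^2=1$ matches the paper's implicit reasoning, so there is nothing to add.
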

Recall that any non-trivial $w_d\in B(p^2M)/W_{p^2M}$ ($p\equiv 1 \pmod 4$ is a prime and $p\nmid M$) acts by $\pm \mathrm{id}$ on each $\Q$-isogeny factor of the Jacobian of $X_0({p^2M})/W_{p^2M}$. Furthermore, if it acts exactly with the same sign on all distinct $\Q$-isogeny factors that become isogenous over $\Q(\sqrt{p})$,
  then any $w\in \textrm{Aut}_{\Q(\sqrt{p})}(X_0(p^2M)/W_{p^2M})\setminus (B(p^2M)/W_{p^2M})$  induces a non-trivial automorphism of $X_0(p^2M)/\langle W_{p^2M}, w_d\rangle$ over $\Q(\sqrt{p})$. (cf. \cite[Lemma 18]{BaTa24}). 
\begin{cor}
		Consider $N= 5^2 M$, where $M$ is a square-free positive integer with $(5,M)=1$.
Assume that the Jacobian of $X_0(N)/\langle w_{25}\rangle$ has no inner twist, and for each quadratic twist $f_{m_1}\otimes\chi_5=f_{m_2}$ with ${m_1}\neq {m_2}$ (where $A_{f_{m_1}}$ and $A_{f_{m_2}}$ are distinct $\Q$-isogeny factors of $\mathrm{Jac}(X_0(N)/\langle w_{25}\rangle)$) the conductor of $f_{m_1}$ or $f_{m_2}$ is equal to $N$.	
	 Let $\sigma_M$ an element of order 3 in
$\mathcal{N}(\Gamma_0(N)+\langle w_{25}\rangle)/(\Gamma_0(N)+\langle w_{25}\rangle)$. Then for positive integers $v_2,\ldots,v_n$ with $v_l||M$ and $v_l\equiv \pm 1 \pmod 5$, 
$\sigma_M$ induces an automorphism of order 3 on $X_0(25 M)/\langle w_{25},w_{v_2},\ldots,w_{v_n}\rangle$.
\end{cor}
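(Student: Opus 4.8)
The plan is to derive the statement from the arithmetic of the Jacobian, by combining four ingredients: (i) the block description of $\sigma_M$ coming from the $\Q$-decomposition \eqref{eq 4.1 new} together with the no-inner-twist hypothesis; (ii) the sign computation of Corollary \ref{cor4.4}; (iii) the descent criterion \cite[Lemma 18]{BaTa24}; and (iv) Theorem \ref{Complete normalizer with 25 theorem} to control the order. Most of the geometric preparation is already laid out in the discussion preceding Corollary \ref{cor4.4}, so the proof is essentially an assembly.

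First I would record that, by the Lemma above in this section, $\sigma_M$ viewed as an automorphism of $X_0(25M)/\langle w_{25}\rangle$ is defined over $\Q(\sqrt{5})$ but not over $\Q$, and has order $3$ there. Under the no-inner-twist assumption one has $\mathrm{End}_{\Q}(A_{f_m})=\mathrm{End}_{\overline{\Q}}(A_{f_m})$ for all $m$, so two distinct $\Q$-isogeny factors in \eqref{eq 4.1 new} can become isogenous over $\Q(\sqrt{5})$ only via a quadratic twist by $\chi_5$. Hence, on the canonical model attached to the newforms in \eqref{eq 4.1 new}, $\sigma_M$ is represented by a block matrix which is trivial on every $\Q$-rational block $A_{f_m}^{n_m}$ with $f_m\otimes\chi_5$ absent from \eqref{eq 4.1 new}, and whose only possibly non-trivial blocks have the form $A_{f_{l_1}}^{n_{l_1}}\times A_{f_{l_2}}^{n_{l_2}}\sim_{\Q(\sqrt{5})}A_{f_{l_1}}^{n_{l_1}+n_{l_2}}$ with $f_{l_2}=f_{l_1}\otimes\chi_5$ and $f_{l_1}\neq f_{l_2}$; since $\sigma_M$ is not $\Q$-rational, at least one such block occurs and $\sigma_M$ acts non-trivially on it.

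Next I would invoke the descent criterion of \cite[Lemma 18]{BaTa24}: such a $\sigma_M$ descends to an automorphism of $X_0(25M)/\langle w_{25},w_{v_2},\ldots,w_{v_n}\rangle$ provided each $w_{v_l}$ — which acts by $\pm\mathrm{id}$ on every $\Q$-isogeny factor — acts with the same sign on $A_{f_{l_1}}$ and $A_{f_{l_2}}$ for every twist-pair on which $\sigma_M$ is non-trivial. The standing hypotheses on $N$ and on $\mathrm{Jac}(X_0(N)/\langle w_{25}\rangle)$ are exactly those of Corollary \ref{cor4.4}, which yields that $w_{v_l}$ acts with the same sign on $A_{f_{l_1}}$ and $A_{f_{l_2}}$ if and only if $\left(\frac{v_l}{5}\right)=1$, i.e. if and only if $v_l\equiv\pm1\pmod 5$; this holds for all $l$ by hypothesis. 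Hence the sign condition is met for all the relevant twist-pairs, and $\sigma_M$ induces an automorphism of $X_0(25M)/\langle w_{25},w_{v_2},\ldots,w_{v_n}\rangle$.

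Finally, for the order I would note that $v_l\|M$ with $v_l\equiv\pm1\pmod 5$ gives $\frac{v_l}{(25,v_l)}=v_l\equiv\pm1\pmod 5$, so Theorem \ref{Complete normalizer with 25 theorem}(2) applies to $W=\langle w_{25},w_{v_2},\ldots,w_{v_n}\rangle$ and shows that $\sigma_M$ represents an element of order exactly $3$ in $\mathcal{N}(\langle\Gamma_0(25M),W\rangle)/\langle\Gamma_0(25M),W\rangle$; as this group acts faithfully on $X_0(25M)/W$ (any element of $\mathrm{PSL}_2(\R)$ fixing all $\langle\Gamma_0(25M),W\rangle$-orbits of $\mathcal{H}$ lies in $\langle\Gamma_0(25M),W\rangle$), the induced automorphism has order $3$. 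The main obstacle I expect is the second step: one must check carefully that the non-trivial part of $\sigma_M$ is confined to the $\chi_5$-twist blocks and that the sufficient condition of \cite[Lemma 18]{BaTa24} applies verbatim here — in particular matching the sign of $w_{v_l}$ on the two twist factors, for which the conductor hypothesis (handled by Corollary \ref{cor4.4} via Lemma \ref{lemma 4.3}) is precisely what is needed.
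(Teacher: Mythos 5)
Your proposal is correct and follows essentially the same route as the paper: the corollary is exactly the assembly of the block description of $\sigma_M$ on the decomposition \eqref{eq 4.1 new} under the no-inner-twist hypothesis, the sufficient descent condition of \cite[Lemma 18]{BaTa24}, and the sign computation of Corollary \ref{cor4.4}, which is precisely the discussion the paper places immediately before the statement. Your extra appeal to Theorem \ref{Complete normalizer with 25 theorem}(2) for the order $3$ is harmless and consistent with the paper (indeed $v_l\equiv\pm1\pmod 5$ puts you in case (2), so $\sigma_M$ already normalizes $\langle\Gamma_0(25M),W\rangle$ and represents an order-$3$ class there).
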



\begin{remark} 
{ By Lemma \ref{lemma 4.3} it is easy to see that
when $M$ is a prime, the assumption that the conductor of $f_{m_1}$ or $f_{m_2}$ is equal to $N=5^2 M$ is always true.}
\end{remark}


    \section*{Acknowledgments}
The first author's work is supported by the Spanish
State Research Agency through the Severo Ochoa and María de Maeztu Program for Centers and Units
of Excellence in R\&D (CEX2020-001084-M), alongside PID2020-116542GB-I00, Ministerio de Ciencia y
Universidades of Spanish government.
Some part of the paper was written during the second author's visit to the Universitat Aut\`{o}noma de Barcelona and the second author is grateful to the Department of Mathematics of Universitat Aut\`{o}noma de Barcelona for its support and hospitality.

\bibliographystyle{alpha}

\noindent{Francesc Bars Cortina}\\
{Departament Matem\`atiques, Edif. C, Universitat Aut\`onoma de Barcelona\\
08193 Bellaterra, Catalonia;}\\
Centre de Recerca Matemàtica (CRM),
 C. dels Til.lers,\\
  08193 Bellaterra, Catalonia, Spain\\
{Francesc.Bars@uab.cat}

\vspace{1cm}

\noindent{Tarun Dalal}\\
 {Institute of Mathematical Sciences, ShanghaiTech University\\ 393 Middle Huaxia Road, Pudong, Shanghai 201210, China}\\
 {tarun.dalal80@gmail.com}

\end{document}